\newcommand{\Z}{\mathbb{Z}}
\newcommand{\R}{\mathbb{R}}
\newcommand{\N}{\mathbb{N}}
\newcommand{\C}{\mathbb{C}}
\newcommand{\U}{\mathbb{U}}
\newcommand{\E}{\mathbb{E}}
\newcommand{\mc}{\mathcal}
\newcommand{\mb}{\mathbb}
\newcommand{\mf}{\mathfrak}
\newcommand{\eps}{\varepsilon}
\newcommand{\ind}{{\bf 1}}
\renewcommand{\P}{\mathbb{P}}
\newcommand{\rK}{{\sf K}}
\newcommand{\uK}{{\underline {\sf K}}}
\newcommand{\rS}{{\sf S}}
\DeclareMathOperator{\dist}{dist}
\renewcommand{\H}{\mathbb{H}}
\newcommand{\Lap}{\Delta\!}
\DeclareMathOperator{\Tr}{Tr}
\DeclareMathOperator{\Id}{Id}
\DeclareMathOperator{\diam}{diam}
\DeclareMathOperator{\SLE}{SLE}
\DeclareMathOperator{\CLE}{CLE}
\DeclareMathOperator{\SL}{SL}
\DeclareMathOperator{\GL}{GL}
\DeclareMathOperator{\Log}{Log}
\title{Double dimers, conformal loop ensembles and isomonodromic deformations}
\newtheorem{thm}{Theorem}
\newtheorem{Thm}[thm]{Theorem}
\newtheorem{Lem}[thm]{Lemma}
\newtheorem{Cor}[thm]{Corollary}
\begin{document}
\maketitle
\begin{abstract}
The double-dimer model consists in superimposing two independent, identically distributed perfect matchings on a planar graph, which produces an ensemble of non-intersecting loops. In \cite{Ken_DD}, Kenyon established conformal invariance in the small mesh limit by considering topological observables of the model parameterized by $\SL_2(\C)$ representations of the fundamental group of the punctured domain. The scaling limit is conjectured to be $\CLE_4$, the Conformal Loop Ensemble at $\kappa=4$ \cite{SheWer_CLE}.

In support of this conjecture, we prove that a large subclass of these topological correlators converge to their putative $\CLE_4$ limit. Both the small mesh limit of the double-dimer correlators and the corresponding $\CLE_4$ correlators are identified in terms of the $\tau$-functions introduced by Jimbo, Miwa and Ueno \cite{JMU} in the context of isomonodromic deformations.
\end{abstract}

\section{Introduction}

The dimer (or perfect matching) model is a classical model of Statistical Mechanics. On a fixed graph $\Gamma=(V,E)$, a dimer configuration consists in a subset ${\mf m}$ of edges such that each vertex is the endpoint of exactly one edge in ${\mf m}$. For planar graphs, Kasteleyn's fundamental result \cite{Kas_dimerstat} states that the partition function of this model can be evaluated as the Pfaffian of a signed adjacency matrix, the Kasteleyn matrix.

The high degree of solvability of the dimer model makes it amenable to asymptotic analysis for large planar graphs (or graphs with fine mesh). In celebrated work, Kenyon \cite{Ken_domino_conformal,Ken_domino_GFF} established conformal invariance and convergence to the Gaussian free field of the height function representation of the dimer configuration. 

In parallel, Schramm \cite{Sch99} introduced a way to describe conformally invariant random interfaces: Schramm-Loewner Evolutions ($\SLE$). This was later extended to describe collections of non-intersecting loops, resulting in Conformal Loop Ensembles ($\CLE$, see in particular in \cite{SheWer_CLE}). These proved a powerful tool to describe and analyze the scaling limit of some discrete models. In the cases of the Loop-Erased Random Walk and Uniform Spanning Tree \cite{LSW_LERW}, of critical percolation \cite{Smi_perc}, the Ising model \cite{Smi_Ising}, and the discrete Gaussian Free Field \cite{SS_freefield}, the convergence to $\SLE$ is established by the martingale observable method. This consists in evaluating the fine mesh limit of a single observable for a collection of domains (the subdomains obtained by exploration along a critical interface).

As observed by Percus \cite{Per_dimer}, superimposing two dimer configurations on a given graph results in a collection of non-intersecting simple loops (and doubled edges), see Figure \ref{Fig:DD}.
\begin{figure}
\begin{center}\includegraphics[scale=.8]{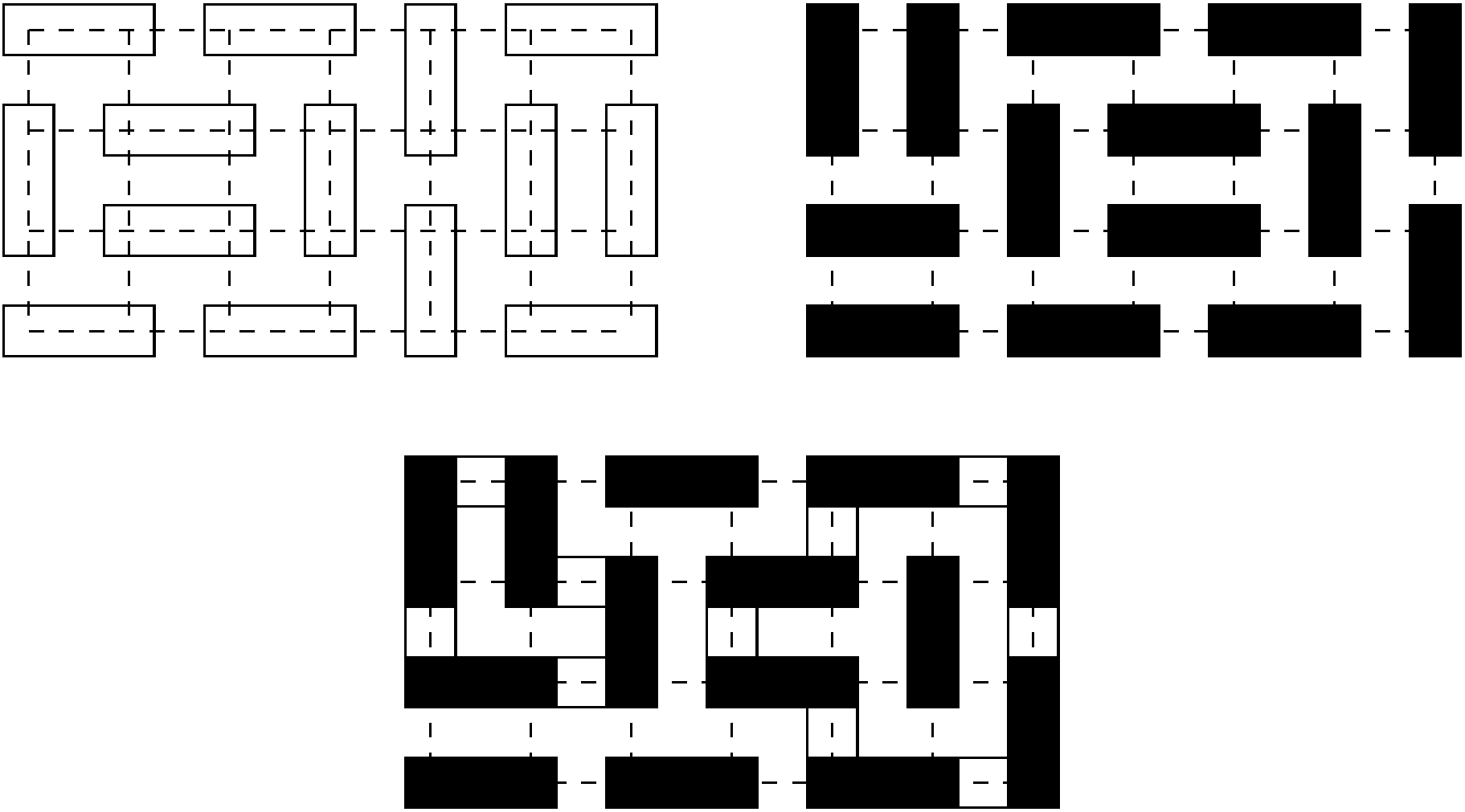}\end{center}
\caption{Top-left: dimer configuration (rectangles) on a portion of the square-lattice (dashed). Top-right: another such configuration. Bottom: resulting double-dimer configuration.}\label{Fig:DD}
\end{figure}
The {\em double-dimer model} consists in sampling two independent dimer configurations and considering the resulting loop ensemble. For bipartite graphs, these may also be seen as level lines of a height function, known \cite{Ken_domino_GFF} to converge to the GFF along suitable sequences of graphs. In agreement with computations in \cite{Ken_domino_conformal,Ken_domino_GFF,Sch_percform}, this led Kenyon to conjecture convergence of chordal double-dimer paths and full double-dimer configurations to $\SLE_4$ and $\CLE_4$ respectively. This is also consistent with the results on Discrete Gaussian Free Field level lines established in \cite{SS_freefield}.

The main obstruction to implement the martingale observable method in the case of the double-dimer is that the dimer model is extremely sensitive to boundary conditions. In particular, the ``sublinear height" type of boundary conditions which are amenable to asymptotic analysis are not dynamically stable under exploration of the domain along double-dimer paths. This is somewhat analogous to the main technical obstruction encountered in the analysis of level lines of the DGFF \cite{SS_freefield}.

In a sharp departure from the martingale observable method, Kenyon \cite{Ken_DD} introduced the idea of analyzing a collection of ``non-commutative" observables in a single domain (rather than a single observable in a collection of domains). Building on Kasteleyn's determinantal solvability and Mehta's theory of quaternionic determinants, he constructed double-dimer observables (in a domain $D$, with punctures - or macroscopic holes - $\lambda_1,\dots,\lambda_n$) parameterized by a representation $\rho:\pi_1(D\setminus\{\lambda_1,\dots,\lambda_n\})\rightarrow\SL_2(\C)$ whose expected value
$$\E_{{\rm dimer}^2}\left(\prod_{\ell{\rm\ loop\ in\ }{\mf m}_1\cup{\mf m}_2}\frac{\Tr(\rho(\ell))}2\right)$$
can be evaluated as the determinant of an operator derived from the Kasteleyn matrix. This operator can be thought as a Kasteleyn operator on a flat rank 2 bundle with monodromy given by $\rho$. This allowed him to establish conformal invariance of the double-dimer model (in the sense of random laminations \cite{Ken_DD}). The asymptotic analysis of the dimer model (in the conformally invariant regime) is based on the interpretation of the Kasteleyn operator as a discrete Cauchy-Riemann operator. This suggests interpreting the (scaling limit of) Kenyon's correlators as regularized determinants of Cauchy-Riemann operators on flat bundles. 

The question of monodromy representations of differential equations (of a complex variable, with poles at prescribed punctures) and the resulting isomonodromic deformation problem is a classical one, featured in Hilbert's twenty-first problem (see eg \cite{BdM_MathPhys,FIKN_Painleve}). An isomonodromic deformation preserves the monodromy representation under (local) displacement of the punctures. In the context of the Ising model, Jimbo, Miwa and Uneo \cite{JMU} introduced a numerical invariant, the $\tau$-function, attached to an isomonodromic deformation. Palmer \cite{Pal_tau} proposed to interpret the $\tau$-function as a regularized determinant of an operator on sections of a bundle over the punctured sphere.

In the present work our goal is to establish that, in the small mesh limit, a large collection of Kenyon's correlators converge to their value for $\CLE_4$, in strong support of the conjectured convergence of double-dimer loops to $\CLE_4$. The connection between double dimers and $\CLE_4$ is established via $\tau$-functions. Our main result concerns double-dimers in $\delta(\Z\times\N)$, the upper half-plane square lattice with small mesh $\delta$. 

\begin{Thm}
Fix punctures $\lambda_1,\dots,\lambda_n\in\H$. Then if $\rho:\pi_1(\H\setminus\{\lambda_1,\dots,\lambda_n\})\rightarrow\SL_2(\R)$ is a representation with unipotent local monodromies close enough to the identity, we have
\begin{align*}
\lim_{\delta\searrow 0}\E^{\delta(\Z\times\N)}_{{\rm dimer}^2}\left(\prod_{\ell{\rm\ loop\ in\ }{\mf m}_1\cup{\mf m}_2}\frac{\Tr(\rho(\ell))}2\right)=
\tau(\H\setminus\{\lambda_1,\dots,\lambda_n\};\rho)=\E_{\CLE_4(\H)}\left(\prod_{\ell\in\CLE_4(\H)}\frac{\Tr(\rho(\ell))}2\right)
\end{align*}
\end{Thm}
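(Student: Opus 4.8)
The plan is to prove the two equalities separately, with the Jimbo--Miwa--Ueno $\tau$-function as the common value. For the first (discrete) equality, start from Kenyon's determinantal representation: the correlator equals $\det(\rK_\rho)/\det(\rK_{\Id})$, where $\rK_\rho$ is the Kasteleyn operator of $\delta(\Z\times\N)$ twisted by the flat rank-$2$ bundle associated with $\rho$, and the normalization by $\rK_{\Id}$ removes the (monodromy-independent) bulk divergence, so that the ratio is a \emph{regularized} determinant of a discrete Cauchy--Riemann operator on the punctured half-plane. The strategy is then variational: differentiate $\log$ of this ratio with respect to the puncture positions $\lambda_j$. By a Jacobi/Cramer-type identity this derivative is expressed through finitely many entries of the inverse operator $\rK_\rho^{-1}$ sampled near the $\lambda_j$ --- equivalently, through the twisted analogue of Kenyon's coupling function, the unique discrete-holomorphic section of the flat bundle with the prescribed $\rho$-monodromy and a prescribed logarithmic-type singularity at a puncture.

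The core analytic step is to show that these twisted discrete-holomorphic sections converge, as $\delta\searrow 0$, to their continuum counterparts: flat sections of the holomorphic rank-$2$ bundle over $\H\setminus\{\lambda_1,\dots,\lambda_n\}$ with monodromy $\rho$ and unipotent logarithmic singularities. This is a problem in discrete complex analysis --- full-plane asymptotics of the inverse Kasteleyn matrix as in Kenyon's work, promoted to the flat bundle, with the boundary of $\delta(\Z\times\N)$ handled by reflection --- carried out uniformly near the punctures, which is the delicate point, since the monodromy is a global obstruction and $\rK_\rho$ is \emph{not} a small perturbation of $\rK_{\Id}$ near a $\lambda_j$. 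Passing to the limit, the discrete variational one-form converges to a continuum one-form which, via the Riemann--Hilbert correspondence (solvable and analytic in the data precisely because the local monodromies are unipotent and close to the identity, yielding a flat meromorphic $\SL_2$-connection on $\Pro^1$ with nilpotent residues), one identifies with the Jimbo--Miwa--Ueno closed one-form $d\log\tau$ --- consistently with Palmer's interpretation of $\tau$ as a regularized determinant of a $\dbar$-operator on the flat bundle, which is the natural continuum limit of the discrete regularized determinant at hand. Integrating, with normalization fixed at $\rho=\Id$ (where all three quantities equal $1$), yields $\lim_\delta\E^{\delta(\Z\times\N)}_{{\rm dimer}^2}(\cdots)=\tau(\H\setminus\{\lambda_1,\dots,\lambda_n\};\rho)$.

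For the second equality, use the coupling of $\CLE_4$ with the Gaussian free field (each loop carrying an independent sign), which turns $\E_{\CLE_4}(\prod_\ell\Tr(\rho(\ell))/2)$ into an expectation governed by the nesting structure of the loops around the punctures: $\rho(\ell)$ is a product of $\rho$-conjugates of the local unipotents, one for each puncture enclosed by $\ell$. Because the local monodromies are close to the identity, expand $\log(\Tr(\rho(\ell))/2)$ to leading (quadratic) order in the defining parameters; the resulting generating function over $\CLE_4$ reduces to a Gaussian/Coulomb-gas computation whose value matches the small-data expansion of $\log\tau$ for the isomonodromic problem. Equivalently --- and this is the cleaner packaging --- one checks that $\lambda\mapsto\E_{\CLE_4(\H)}(\prod_\ell\Tr(\rho(\ell))/2)$ enjoys the same conformal covariance, the same behaviour as punctures collide or approach $\partial\H$, and satisfies the same (Schlesinger/isomonodromic) deformation equations as $\tau$, and concludes by a uniqueness argument; analyticity in $\rho$ near $\Id$ again guarantees the hypotheses of this characterization.

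The main obstacle is the uniform-in-$\delta$ control of $\rK_\rho^{-1}$ near the punctures: one must construct the discrete twisted coupling function, prove it has the right local logarithmic singularity, show it converges away from the punctures, and verify that the finitely many near-puncture entries entering the variational formula converge with the constants needed to pin down the regularized determinant --- all while the twist prevents a naive perturbative comparison with the untwisted model. The $\CLE_4$ side is comparatively soft once the GFF coupling is invoked, but reconciling its combinatorial/nesting expansion with the analytic definition of $\tau$ still requires the near-identity hypothesis to make both expansions converge and agree term by term.
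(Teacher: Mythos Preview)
Your approach to the first equality is essentially the paper's: a variational argument in the puncture positions, controlling the twisted inverse Kasteleyn kernel near the punctures via a parametrix that patches single-puncture discrete solutions with the continuum fundamental solution $Y_0$, and identifying the discrete logarithmic variation with $d\log\tau$. However, your normalization step contains a gap. You propose to fix the integration constant at $\rho=\Id$, but the variation is in $\lambda$ with $\rho$ fixed, so the undetermined constant depends on $\rho$ and cannot be read off from the trivial representation. The paper's device is instead to move the punctures to the real boundary: it shows (Lemma~\ref{Lem:taubound}) that $\tau$ extends continuously to $\Lambda_\R$ with value $1$, and separately (Lemma~\ref{Lem:nearbounddiscr}) that the discrete determinant $\det(\rK_\rho\rK_{\Id}^{-1})$ is close to $1$ uniformly in $\delta$ when the punctures are near $\R$. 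This near-boundary analysis requires a second, independent parametrix construction and is not a byproduct of the bulk variational estimate.

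Your approach to the second equality is genuinely different from the paper's and, as stated, not complete. The paper uses neither the GFF coupling nor a perturbative expansion in $\rho$, nor a direct verification that the $\CLE_4$ expectation satisfies Schlesinger equations. Instead it constructs an explicit local martingale for the $\SLE_4(-2)$ exploration process: $M_t=\tau(\Lambda_t)\,\Tr\bigl(Y_0(W_t;\Lambda_t)^{-1}Y_0(Z_t;\Lambda_t)\bigr)$, where $\Lambda_t$ are the images of the punctures under the Loewner map and $Y_0$ is the fundamental solution of \eqref{eq:Fuchs}. The martingale property is an It\^o computation exploiting the Schlesinger equations together with the ${\mf sl}_2$ identity $A_iA_j+A_jA_i=\Tr(A_iA_j)\Id_2$ (Lemma~\ref{Lem:locmart}); uniform integrability comes from annulus-crossing bounds for $\CLE_4$ (Lemma~\ref{Lem:UI}); optional stopping at the time the exploration completes a nontrivial loop gives a one-step recursion (Lemma~\ref{Lem:oneloop}); iterating along the branching exploration of $\CLE_4$ then yields $\tau=\E_{\CLE_4}\bigl(\prod_\ell\Tr(\rho(\ell))/2\bigr)$. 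Your GFF proposal leaves unexplained how signed loops produce the nonabelian trace functional, and your alternative --- check the deformation equations directly --- is in spirit what the martingale computation implements, but the mechanism (Loewner flow displacing the punctures, $Y_0$ tracking the monodromy accumulated by the exploration) is the missing ingredient.
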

The definition of the LHS is discussed in Section \ref{Sec:DDHP}; and that of the $\tau$-function in Sections \ref{Sec:tau},\ref{Sec:tauHP}
\begin{proof}
From Lemma \ref{Lem:conv} and Theorem \ref{Thm:CLEtau}.
\end{proof}

This has consequences for more direct aspects of the scaling limit.

\begin{Cor}
Let $N^\delta_{xy}$ be the number of double-dimer loops on $\delta(\Z\times\N)$ encircling $x\neq y$; and $N_{xy}$ be the number of loops around $x,y$ in a (nested) $\CLE_4$. Then, as $\delta\searrow 0$, $N_{xy}^\delta$ converges in law to $N_{xy}$  
\end{Cor}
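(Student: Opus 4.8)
The plan is to recover the law of the loop-count random variable $N_{xy}$ from the family of topological correlators treated in the Theorem by a generating-function (moment-type) argument. The key observation is that if we pick a representation $\rho\colon\pi_1(\H\setminus\{x,y\})\to\SL_2(\R)$ whose value on any small loop around a single puncture is trivial (the identity), but whose value on a loop $\ell$ encircling \emph{both} $x$ and $y$ is a fixed matrix $A\in\SL_2(\R)$ with $\tfrac12\Tr(A)=t$, then for any double-dimer or $\CLE_4$ configuration every loop that separates $x$ from $y$ or encircles neither contributes a factor $1$, while every loop encircling both $x$ and $y$ contributes the factor $t$. Hence
\begin{align*}
\E\!\left(\prod_{\ell}\frac{\Tr(\rho(\ell))}{2}\right)=\E\!\left(t^{\,N_{xy}}\right),
\end{align*}
so this particular correlator is exactly the probability generating function of $N_{xy}$ evaluated at $t$. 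The same identity holds on $\delta(\Z\times\N)$ with $N_{xy}^\delta$ in place of $N_{xy}$.

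First I would make the above substitution precise: a loop in $\H\setminus\{x,y\}$ is, up to free homotopy and orientation, classified by whether it is contractible (encircles nothing), encircles exactly one of $x,y$, or encircles both; one must check the double-dimer loops (resp.\ $\CLE_4$ loops) are simple and the relevant conjugacy-class count is well defined, which is already implicit in the statement of the Theorem. Next I would exhibit, for $t$ in a neighbourhood of $1$, an explicit $\SL_2(\R)$ representation realizing the prescribed monodromies with \emph{unipotent local monodromies close enough to the identity} — in fact the local monodromies can be taken to be exactly the identity, and one sends the generator of the loop around $\{x,y\}$ to a unipotent matrix $\begin{pmatrix}1&s\\0&1\end{pmatrix}$, whose normalized trace is $1$; to get traces $t<1$ one instead perturbs to an elliptic element $\begin{pmatrix}\cos\theta&\sin\theta\\-\sin\theta&\cos\theta\end{pmatrix}$ with $\cos\theta=t$, but this is not unipotent, so some care is needed about which values of $t$ the Theorem directly covers. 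This is the one genuinely delicate point: the Theorem as stated gives convergence only for $\rho$ with unipotent local monodromies close to the identity, and to run the generating-function argument cleanly I want convergence of $\E(t^{N_{xy}^\delta})\to\E(t^{N_{xy}})$ for enough values of $t\in[0,1]$ to pin down the law.

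I would handle this by noting it suffices to have convergence of the moment generating function, or equivalently of $\E(t^{N_{xy}^\delta})$, for $t$ in a set with an accumulation point in $(0,1)$ together with a tightness/uniform-integrability input: since $N_{xy}^\delta\ge 0$ is integer-valued, once $\E(t^{N_{xy}^\delta})$ converges for such $t$ and the limits agree with $\E(t^{N_{xy}})$, the generating functions — being analytic in $t$ on the unit disc and uniformly bounded there — converge on the whole disc by Vitali's theorem, hence $N_{xy}^\delta\Rightarrow N_{xy}$. Concretely I would take $\rho=\rho_t$ with trivial local monodromies and the $\{x,y\}$-monodromy a small hyperbolic or unipotent element so that $\tfrac12\Tr(\rho_t(\ell))=t$ ranges over an interval near $1$ (hyperbolic elements $\mathrm{diag}(e^{u},e^{-u})$ give $t=\cosh u\ge 1$; but one can also use elements close to the identity in any one-parameter subgroup through $I$, giving $t$ in a two-sided neighbourhood of $1$ along the elliptic direction), apply the Theorem to get $\E(t^{N_{xy}^\delta})\to\E(t^{N_{xy}})$ for these $t$, and conclude by analyticity. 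The main obstacle is therefore purely this matching of hypotheses — ensuring the representations needed to probe $N_{xy}$ fall within the ``unipotent, close to identity'' regime for which the Theorem supplies convergence, and supplementing with the elementary complex-analytic continuation argument to upgrade pointwise-near-$1$ convergence of generating functions to convergence in law. Everything else (the identification of the correlator with $\E(t^{N_{xy}})$, and uniform boundedness of the generating functions) is routine.
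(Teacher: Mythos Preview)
Your overall strategy --- identify the correlator with the probability generating function $\E(t^{N_{xy}})$, apply the Theorem for $t$ near $1$, and extend by analyticity --- is exactly the argument in Section~10 of \cite{Ken_DD}, which is what the paper's own proof cites. So the approach is correct in outline.

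There is, however, a genuine gap in your execution. You write that ``the local monodromies can be taken to be exactly the identity, and one sends the generator of the loop around $\{x,y\}$ to a unipotent matrix''. This is impossible: $\pi_1(\H\setminus\{x,y\})$ is free on the two generators $\gamma_x,\gamma_y$, and a simple loop encircling both punctures is (up to conjugacy and orientation) $\gamma_x\gamma_y$. Its image under $\rho$ is therefore \emph{determined} by the local monodromies; if $\rho(\gamma_x)=\rho(\gamma_y)=\Id_2$ then $\rho$ is the trivial representation and you only ever get $t=1$. Your subsequent discussion of elliptic or hyperbolic choices for the ``around-both'' monodromy inherits the same confusion --- that matrix is not a free parameter.

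The fix (and this is what Kenyon does) is to take genuinely non-trivial unipotent \emph{local} monodromies that do not commute, e.g.
\[
\rho(\gamma_x)=\begin{pmatrix}1&a\\0&1\end{pmatrix},\qquad \rho(\gamma_y)=\begin{pmatrix}1&0\\b&1\end{pmatrix},
\]
so that $\rho(\gamma_x\gamma_y)$ has trace $2+ab$ and $t=1+\tfrac{ab}{2}$ ranges over a two-sided neighbourhood of $1$ as $a,b$ vary over small reals. Each simple loop around a single puncture still has half-trace $1$, so the correlator is indeed $\E(t^{N_{xy}})$. This puts you squarely inside the hypotheses of the Theorem (unipotent local monodromies, close to the identity), and your analyticity/Vitali argument to upgrade to convergence in law then goes through. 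Once you correct this point, your proof matches the one the paper invokes.
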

\begin{proof}
See Section 10 in \cite{Ken_DD}.
\end{proof}

In establishing conformal invariance of double-dimers in \cite{Ken_DD} (as laminations), Kenyon relies on results from \cite{FocGon} enabling him to characterize probability measures (on finite laminations in multiply-connected domains) by the values of $\SL_2(\C)$ observables. One issue is the accumulation of small loops nested around each puncture, which is circumvented in the present work by restricting to the case of unipotent local monodromies. By general principles (see Section \ref{Sec:SLECLE} for a discussion of loop ensembles), we have

\begin{Cor}
Let $\mu_\delta$ be the measure on loop ensembles induced by double dimers on $\delta(\Z\times\N)$. The assumptions:
\begin{enumerate}
\item $(\mu_\delta)_\delta$ is tight, and
\item  a probability measure $\mu$ on loop ensembles in a simply-connected domain $D$ is uniquely characterized by the expectations of the functionals
$$\int\prod_{\alpha}\frac{\Tr(\rho(\ell_\alpha))}2d\mu((\ell_\alpha)_\alpha)$$
where for any set of punctures $\{\lambda_1,\dots,\lambda_n\}$, $\rho:\pi_1(D\setminus\{\lambda_1,\dots,\lambda_n\})\rightarrow\SL_2(\R)$ is a representation with unipotent local monodromies close enough to the trivial one,
\end{enumerate}
imply weak convergence of the $\mu_\delta$'s to the $\CLE_4(D)$ measure as $\delta\searrow 0$.
\end{Cor}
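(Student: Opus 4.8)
The plan is a standard \emph{tightness-and-characterization} argument: hypothesis~(1) produces subsequential limits, hypothesis~(2) pins them down, and the Theorem supplies the value of the characterizing functionals along the discrete sequence. (Throughout, $D=\H$ is the half-plane approximated by $\delta(\Z\times\N)$.) Fix a Polish topology on the space $\mathcal X$ of loop ensembles in $D$ — as set up in the discussion of Section~\ref{Sec:SLECLE} — in which $\CLE_4(D)$ is a Borel probability measure and, by hypothesis~(1), $(\mu_\delta)_\delta$ is tight, hence relatively compact by Prokhorov's theorem. It therefore suffices to show that every subsequential weak limit $\mu=\lim_k\mu_{\delta_k}$ equals $\CLE_4(D)$, as this forces $\mu_\delta\Rightarrow\CLE_4(D)$. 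By hypothesis~(2) it is in turn enough to check, for every finite set $\{\lambda_1,\dots,\lambda_n\}\subset D$ and every $\rho\colon\pi_1(D\setminus\{\lambda_1,\dots,\lambda_n\})\to\SL_2(\R)$ with unipotent local monodromies close enough to the identity, that
$$\int_{\mathcal X}\prod_{\ell\in\mathcal L}\frac{\Tr(\rho(\ell))}2\,d\mu(\mathcal L)\;=\;\E_{\CLE_4(D)}\Bigl(\prod_{\ell}\frac{\Tr(\rho(\ell))}2\Bigr).$$

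Write $F_\rho(\mathcal L)=\prod_{\ell\in\mathcal L}\tfrac12\Tr(\rho(\ell))$. Two features make this functional tractable, and both rest on the assumption on $\rho$. First, a simple loop enclosing at most one puncture has free homotopy class conjugate to $e$ or to a standard generator $\gamma_i$, so it contributes the factor $\tfrac12\Tr(I)=1$, respectively $\tfrac12\Tr(\rho(\gamma_i))=1$ by unipotency of the local monodromy; thus the infinitely many small loops nested around each individual puncture — exactly the obstruction noted above — contribute trivially, and only loops enclosing at least two punctures can give a factor $\neq1$. Second, a loop enclosing two given punctures has macroscopic size, so a.s.\ only finitely many loops contribute (for $\CLE_4$, and for the discrete models uniformly in $\delta$ by tightness and crossing estimates), each by a factor close to $1$ when $\rho$ is close to the trivial representation. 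Hence $F_\rho$ is a.s.\ a finite product; it is continuous at every ensemble through none of whose loops a $\lambda_i$ passes — a full-measure set for $\mu$, for $\CLE_4(D)$, and for each $\mu_{\delta_k}$ — and, for $\rho$ close enough to trivial, the contributions are controlled enough that $(F_\rho)$ is uniformly integrable along $(\mu_{\delta_k})$. Weak convergence then upgrades to $\int F_\rho\,d\mu_{\delta_k}\to\int F_\rho\,d\mu$.

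On the other hand, by the definition of the left-hand side in the Theorem, $\int F_\rho\,d\mu_\delta=\E^{\delta(\Z\times\N)}_{{\rm dimer}^2}\bigl(\prod_\ell\tfrac12\Tr(\rho(\ell))\bigr)$, and the Theorem gives $\E^{\delta(\Z\times\N)}_{{\rm dimer}^2}\bigl(\prod_\ell\tfrac12\Tr(\rho(\ell))\bigr)\to\tau(D\setminus\{\lambda_1,\dots,\lambda_n\};\rho)=\E_{\CLE_4(D)}(F_\rho)$ as $\delta\searrow0$. Comparing with the previous step, $\int F_\rho\,d\mu=\E_{\CLE_4(D)}(F_\rho)$ for every admissible pair $(\{\lambda_i\},\rho)$. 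Since $\CLE_4(D)$ is a probability measure realizing these functional values and, by hypothesis~(2), at most one measure does, $\mu=\CLE_4(D)$. As this holds for every subsequential limit, $\mu_\delta\Rightarrow\CLE_4(D)$ as $\delta\searrow0$.

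The only non-soft step is the uniform integrability of $F_\rho$ along the approximating sequence. Near-triviality of $\rho$ makes each individual factor close to $1$, but a double-dimer loop encircling several punctures may be geometrically intricate: its free homotopy class can have large word length in the $\gamma_i$, so $\Tr(\rho(\ell))$ can be sizeable, and one must preclude a non-negligible chance of very many, or very contorted, such loops. I expect this to follow from hypothesis~(1) together with an annulus/crossing estimate controlling, uniformly in $\delta$, both the number of loops surrounding any fixed pair of punctures and their winding complexity — precisely the point where nested small loops around a single puncture would otherwise spoil the argument, and where restricting to unipotent local monodromies is essential.
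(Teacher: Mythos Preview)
Your proposal is correct and is precisely the ``general principles'' argument the paper alludes to in lieu of a proof --- the paper states the corollary as an immediate consequence of the main Theorem together with the loop-ensemble framework of Section~\ref{Sec:SLECLE}, and offers no further justification. Your write-up is in fact more careful than the paper in that it explicitly flags the one non-soft step (continuity and uniform integrability of $F_\rho$ needed to pass the expectations through the weak limit).
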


We chose to phrase in the simplest framework, viz. double-dimers on $\Z\times\N$. Given known arguments, these results can be extended in several directions (at some technical and notational cost):
\begin{enumerate}
\item The domain can be chosen as a general simply-connected domain, with ``Temperleyan" boundary conditions of the type considered in \cite{Ken_domino_conformal} (or the Dirichlet/Neumann conditions natural in the context of Peano exploration of the Uniform Spanning Tree, see \cite{LSW_LERW}). For the near-boundary estimates \ref{Lem:nearbounddiscr}, it is convenient to have a small but macroscopic flat segment on the boundary.
\item Instead of the square lattice, the graph can be modelled on a Temperleyan isoradial graph, as in \cite{Ken_critical}.
\item Instead of $\SL_2(\R)$-monodromies, one can use general (locally unipotent) $\SL_2(\C)$-valued monodromies. This does not create genuine additional difficulties but muddles a bit the discussion.
\end{enumerate}

The article is organized as follows. Section 2 provides background material on dimers, $\tau$-functions, and conformal loop ensembles. Section 3 establishes convergence of double-dimer observables to $\tau$-functions. Section 4 evaluates the corresponding observables for the conformal loop ensemble $\CLE_4$, also in terms of $\tau$-functions. Miscellaneous technical estimates are relegated to Section 5.

\section{Background and notations}

\subsection{Dimers and double dimers}

Consider a graph $\Gamma=(V,E)$; a {\em dimer configuration} (or {\em perfect matching}) consists in a subset of edges (dimers) of the graph such that each vertex is the endpoint of exactly one edge in the configuration. For a finite weighted graph ($\omega(e)$ denotes the weight of the edge $e$), the weight of a dimer configuration ${\mf m}$ is
$$w({\mf m})=\prod_{e\in{\mf m}}\omega(e)$$
For nonnegative weights, one can consider the probability measure $\P$ on dimer configurations such that $\P\{{\mf m}\}\propto w({\mf m})$: this is the {\em dimer model}. Let us now specialize to oriented bipartite graphs. Vertices are thus partitioned into black ($B$) and white ($W$) vertices and we assume that $|B|=|W|$ (otherwise there is no dimer configuration). The {\em Kasteleyn-Percus matrix} $\rK:\C^B\rightarrow\C^W$ is (a block of the) signed, weighted adjacency matrix with matrix elements: $\rK(w,b)=\pm \omega(bw)$, depending on whether the edge $(bw)$ is oriented from $b$ to $w$ or from $w$ to $b$.\\
Kasteleyn's fundamental enumeration result \cite{Kas_dimerstat} states that for any given (finite) planar graph, one can construct and characterize an orientation s.t.:
$${\mc Z}_{\rm dimer}=\sum_{{\mf m}}w({\mf m})=\pm\det(\rK)$$
The determinant is evaluated w.r.t. ``canonical" bases on $\C^B$, $\C^W$, which are given up to permutation, hence the sign ambiguity. This is an algebraic identity in the variables $\omega(e)$, $e\in E$. Consequently, if we consider a modified Kasteleyn matrix $\rK_\rho$ with matrix elements $\rK_\rho(w,b)=\rK(w,b)\rho(bw)$, we have:
$$\E\left(\prod_{e\in{\mf m}}\rho(bw)\right)=\frac{\det(\rK_\rho)}{\det(\rK)}$$
(where we restrict to the bipartite case for simplicity). From now on we are only considering planar bipartite graphs.\\
For instance, one may  consider several disjoint ``branch cuts", viz. disjoint simple paths $\gamma_i$ from $f_i$ to $f'_i$ on the dual graph, $i=1,\dots,n$; $\chi_1,\dots,\chi_n\in\U$ are given roots of unity. Set $\rho(bw)=\chi_i$ (resp. $\chi_i^{-1}$) if $\gamma_i$ crosses $(bw)$ with $b$ (resp. $w$) on its lefthand side, and $\rho(bw)=1$ otherwise. Then it is easy to check that $\prod_{e\in{\mf m}}\rho(bw)$ does not depend on the position of the branch cuts, can be expressed simply in terms of the height function. The asymptotics of $\E\left(\prod_{e\in{\mf m}}\rho(bw)\right)$ in the large scale regime are studied in some details in \cite{Dub_tors}, where they are compared with the corresponding free field electric correlators. The data $\rho$ corresponds to a unitary character of $\pi_1(\Sigma)$, where $\Sigma$ is the punctured plane $\C\setminus\{f_1,\dots,f'_n\}$; the analysis is based on comparing $(\rK^\rho)^{-1}$ to the Cauchy kernel for the unitary line bundle over $\Sigma$ specified by $\rho$.\\

In the double-dimer problem, one samples independently two dimer configurations ${\mf m}_1,{\mf m}_2$ on the same graph according to the same (weighted) dimer measure (it is also possible to consider two distinct systems of edge weights). The superposition ${\mf m}_1\cup{\mf m}_2$ of the two matchings produces a collection of simple closed loops and doubled edges which covers the (vertices of) the underlying graph. Trivially,
$${\mc Z}_{{\rm dimer}^2}=\sum_{{\mf m}_1,{\mf m}_2}w({\mf m}_1)w({\mf m}_2)=\det(\rK\oplus\rK)$$
where $\rK\oplus\rK:\C^B\oplus\C^B\simeq (\C^2)^B\longrightarrow\C^W\oplus\C^W\simeq (\C^2)^W$. Consider $f_0,\dots,f_n$ (centers of) faces of the underlying graph; $\Sigma=\hat\C\setminus\{f_0,\dots,f_n\}$ a punctured sphere; $\rho:\pi_1(\Sigma)\rightarrow \SL_2(\C)$ a representation of the fundamental group of the punctured sphere.  For concreteness, one may choose a sequence of disjoint cuts $\gamma_1,\dots,\gamma_n$ (simple paths on the dual graph) with $\gamma_i$ going from $f_{i-1}$ to $f_i$ and a matrix $\chi_i\in \SL_2(\C)$. If $\ell_i$ is a counterclockwsie (ccwise) oriented loop with $f_i$ in its interior and other punctures in its exterior, set $\rho(\ell_i)=\chi_i^{-1}\chi_{i+1}$ (with $\chi_{-1}=\chi_{n+1}=\Id_2$). Since $\pi_1(\Sigma)$ is a group with generators $\ell_0,\dots,\ell_n$ and relations $\gamma_0\dots\gamma_n=\Id$, the data of $\rho$ and $(\chi_1,\dots,\chi_n)$ are equivalent. This data also defines a rank 2 flat holomorphic bundle over $\Sigma$.

There is a natural way to twist $\rK\oplus \rK$ by the character $\rho$. If $(bw)$ crosses $\gamma_i$, the $2\times 2$ block $K(w,b)\Id_2$ corresponding to $(wb)$ in $\rK\oplus \rK$ is replaced with the block $\rK(w,b)\chi_i$ (resp. $\rK(w,b)\chi_i^{-1}$) if $b$ (resp. $w$) is on the LHS of $\gamma_i$; leaving all other matrix elements unchanged, this defines a twisted operator $(\rK\oplus\rK)_\rho$. Based on Mehta's notion of quaternionic determinants \cite{Meh_RM}, Kenyon \cite{Ken_DD} established the following remarkable identity:
\begin{equation}\label{eq:KenDD}
\E_{{\rm dimer}^2}\left(\prod_{\ell{\rm\ loop\ in\ }{\mf m}_1\cup{\mf m}_2}\frac{\Tr(\rho(\ell))}2\right)=\frac{\det((\rK\oplus\rK)_\rho)}{\det(\rK\oplus\rK)}
\end{equation}
Remark that $\Tr(\rho(\ell))$ depends neither on the choice of a base point for the fundamental group nor on the orientation of $\ell$ (since $\rho(\ell)$ is in $\SL_2(\C)$ and trivially $\left(\begin{array}{cc}
a&b\\
c&d
\end{array}
\right)^{-1}=\left(\begin{array}{cc}
d&-b\\
-c&a
\end{array}
\right)$ if $ad-bc=1$).

\subsection{Isomonodromic deformations and $\tau$-functions}\label{Sec:tau}

Here we briefly sketch some elements of the classical theory of isomonodromic deformations; for comprehensive surveys, see eg \cite{BdM_MathPhys,Yosh,FIKN_Painleve}.

Let $\lambda_1,\dots,\lambda_n$ be given punctures on the Riemann sphere $\hat \C=\C{\mb P}^1$ ($\lambda_i\neq\infty$), $\Sigma=\hat\C\setminus\{\lambda_1,\dots,\lambda_n\}$. Let $A_1,\dots,A_n\in M_r(\C)$ be given and consider the Fuchsian equation (vector- or matrix-valued, holomorphic, linear ODE with regular singular points)
\begin{equation}\label{eq:Fuchs}
\partial_z Y(z)=A(z)Y(z)=\sum_{i=1}^n\frac{A_i}{z-\lambda_i}Y(z)
\end{equation}
The $\lambda_i$'s are regular singular points in the sense that solutions have polynomial growth near these singularities: $Y(z)=O((z-\lambda)^{-N})$ for some $N$.

Furthermore we assume that $\sum_i A_i=0$ so that $\infty$ is a regular point. Local solutions define a rank $r$ local system (a sheaf in dimensional vector spaces over $\Sigma$), by Cauchy-Kowalevski. If $U$ is a simply connected neighborhood of $\infty$ in $\Sigma$, let $Y_0$ be the unique solution on $U$ with $Y_0(\infty)=\Id_r$. Then $\partial_z\log\det(Y_0(z))=\Tr(\partial_z Y_0(z)Y_0^{-1}(z))=\sum_i\Tr(A_i)/(z-\lambda_i)$. Hence if the $A_i$'s are in ${\mf {sl}}_r(\C)$, $Y_0$ takes values in $\SL_r(\C)$ (up to a gauge change, one may assume this is the case).

Given any loop $\gamma$ in $\Sigma$ rooted at infinity, one can compute the solution $Y_0$ along $\gamma$ (starting at $Y_0(\infty)=\Id$ and analytically continuing it along $\gamma$); plainly the resulting matrix depends only on the homotopy class of $\gamma$ and this defines a monodromy representation: $\rho:\pi_1(\Sigma)\rightarrow \GL_r(\C)$. More precisely, the analytic continuation of a germ $Y_0$ at infinity along the loop $\gamma$ is a germ $Y_0\rho(\gamma)^{-1}$. Replacing $A_i$ with $GA_iG^{-1}$ ($G\in \GL_r(\C)$ fixed) leads to the conjugate representation $G\rho G^{-1}$. 

This procedure associates to a Fuchsian system \eqref{eq:Fuchs} (determined by the punctures $\lambda_i$ and matrices $A_i$) a monodromy representation $\rho$. The fundamental group $\pi_1(\Sigma)$ has a set of generators $\gamma_1,\dots,\gamma_n$ s.t. $\gamma_i$ encircles $\lambda_i$ counterclockwise and no other puncture; they satisfy the relation $\gamma_1\dots\gamma_n=\Id$. The data of a representation $\pi_1(\Sigma)\rightarrow\GL_r(\C)$ is thus equivalent to the data of monodromy matrices $M_1=\rho(\gamma_1),\dots,M_n=\rho(\gamma_n)$ with $M_1\dots M_n=\Id_r$. Hilbert's twenty-first problem consists in finding $A_i$'s producing a prescribed representation.

We consider now displacements of punctures $\lambda_1,\dots,\lambda_n$. Let $A_i=A_i(\lambda_1,\dots,\lambda_n)$ (maintaining $\sum_i A_i=0$). If $(\lambda'_1,\dots,\lambda'_n)$ is close to $(\lambda_1,\dots,\lambda_n)$, there is a natural identification $\pi_1(\Sigma')\simeq\pi_1(\Sigma)$, where $\Sigma'=\hat\C\setminus\{\lambda'_1,\dots,\lambda'_n\}$. An {\em isomonodromic deformation} of \eqref{eq:Fuchs} is s.t. the monodromy representation is locally constant. The {\em Schlesinger equations}
\begin{equation}\label{eq:Schlesinger}
\partial_{\lambda_j}A_i=\frac{[A_i,A_j]}{\lambda_i-\lambda_j}
\end{equation}
for $j\neq i$ ensure that the monodromy representation $\rho=\rho(\lambda_1,\dots,\lambda_n)$ is indeed locally constant. (The last derivative $\partial_iA_i$ is fixed by the condition $\sum_j A_j=0$). Alternatively, one may write 
$$dA_i=-\sum_{j:j\neq i}[A_i,A_j]\frac{d(\lambda_i-\lambda_j)}{\lambda_i-\lambda_j}$$
Under \eqref{eq:Schlesinger}, one can extend \eqref{eq:Fuchs} in order to account simultaneously for the dependence on the punctures $\lambda=(\lambda_1,\dots,\lambda_n)$ and the ``spectator point" $z$ for the fundamental solution $Y_0$ normalized by $Y_0(\infty)=\Id$:
\begin{equation}\label{eq:Fuchstot}
dY_0(z;\lambda)=\sum_j A_jY_0\frac{d(z-\lambda_j)}{(z-\lambda_j)}
\end{equation}
Writing this equation under the general Pfaffian form $dY=\Omega Y$ ($\Omega$ a matrix-valued 1-form in the variables $z,\lambda_1,\dots,\lambda_n$), the integrability condition $d\Omega+\Omega\wedge\Omega=0$ boils down to \eqref{eq:Schlesinger}. In other terms, the nonlinear Schlesinger equations \eqref{eq:Schlesinger} are the integrability conditions of the linear equations \eqref{eq:Fuchstot}. 

Following Jimbo-Miwa-Ueno \cite{JMU} (see also \cite{BdM_MathPhys}), set
\begin{equation}\label{eq:dlogtau}
\omega=\frac 12\sum_{(i,j):i\neq j}\Tr(A_iA_j)\frac{d(\lambda_i-\lambda_j)}{\lambda_i-\lambda_j}
\end{equation}
a 1-form in $\lambda_1,\dots,\lambda_n$. It follows from the Schlesinger equations \eqref{eq:Schlesinger} that $\omega$ is closed: $d\omega=0$. Consequently, setting $d\log\tau=\omega$ defines (at least locally and up to multiplicative constant) a {\em $\tau$-function} on $\hat \C^n\setminus\Delta$, where $\Delta=\{(\lambda_1,\dots,\lambda_n):\exists i<j{\rm\ s.t.\ }\lambda_i=\lambda_j\}$.

Consider the rank $r$ flat bundle $V$ corresponding to $\rho$. One may identify its sections with functions $f:\tilde\Sigma\rightarrow(\C^r)^t$ ($\tilde\Sigma$ the universal cover, values are row vectors) satisfying: $s(\gamma^*x)=s(x)\rho(\gamma)$ (where $\pi_1(\Sigma)$ operates on fibers of $\tilde\Sigma$, $(\gamma_1\gamma_2)^*x=\gamma_2^*\gamma_1^*x$). If $Y_0$ is a fundamental solution, then sections may be written $s=fY_0$ with $f:\Sigma\rightarrow(\C^r)^t$.

Let us now explain some connections to Riemann-Hilbert problems. Consider a branch cut $\gamma$ running from, say, $\lambda_1$ to $\lambda_n$ and going through each puncture $\lambda_1,\dots,\lambda_n$. Then the fundamental solution $Y_0$ is single-valued on $\hat\C\setminus\gamma$. Let $\lambda$ be a point on $\gamma$ (oriented from $\lambda_1$ to $\lambda_n$); $\lambda_-$ (resp. $\lambda_+$) denotes $z$ approached from the left (resp. right) of $\gamma$. Let $\ell$ be a counterclockwise loop rooted at infinity and crossing the cut $\gamma$ once at $\lambda$. One verifies easily that $Y_0(\lambda_+)^{-1}Y_0(\lambda_-)=\rho(\ell)^{-1}$. If moreover the $A_i$'s are traceless and with eigenvalues of modulus $<\frac 12$, we have:
\begin{align*}
Y_0\in\SL_2(\C), Y_0(\infty)=\Id_2\\
Y_0(\lambda_-)=Y_0(\lambda_+)N_i&&\forall \lambda\in(\lambda_i,\lambda_{i+1})\subset\gamma\\
Y_0(z)=o(|z-\lambda_i|^{-1/2})&&\forall i
\end{align*}
If $\tilde Y_0$ is another such matrix-valued analytic function on $\hat\C\setminus\gamma$ with the same prescribed jumps $N_i$ and growth conditions near the punctures, we see that $Y_0\tilde Y_0^{-1}$ is single-valued with removable singularities and is thus constant. This gives a unique characterization of $Y_0$ as a solution of a Riemann-Hilbert problem.

Let us now consider $S(.,w)$ a $\GL_2(\C)$-valued meromorphic function a simple pole with residue $\Id_2$ at $w$ (and no other pole on $\hat\C\setminus\gamma$) and the same jump and growth conditions (near the punctures) as $Y_0$; and $S(\infty,w)=0$. By considering $S(.,w)Y_0^{-1}$ we see that $S$ is uniquely characterized and given by
$$S(z,w)=\frac{Y_0(w)^{-1}Y_0(z)}{z-w}$$
Near $w$ we have
$$S(z,w)=\frac{\Id_r}{z-w}+R(w)+O(z-w)$$
with the {\em Robin kernel} 
$$R(w)=Y_0^{-1}(w)\partial_w Y_0(w)=Y_0^{-1}(w)(\sum_i\frac {A_i}{w-\lambda_i})Y_0(w)$$ 
Let 
$$R_i=R_i(\lambda)=\lim_{w\rightarrow\lambda_i}(Y_0(w)R(w)Y_0^{-1}(w)-\frac{A_i}{w-\lambda_i})=\sum_{j\neq i}\frac{A_j}{\lambda_j-\lambda_i}$$
Then
$$\partial_{\lambda_i}\log\tau=\Tr(A_iR_i)$$
ie the $\tau$-function may be recovered from the kernel $S$ and in particular its behavior near the punctures. This gives an expression for the $\tau$-function in terms of a RH problem.

\subsection{SLE and CLE}\label{Sec:SLECLE}

In this section, we briefly discuss Schramm-Loewner Evolutions (SLEs) and the related Conformal Loop Ensembles (CLEs). See in particular \cite{Wer_Flour,Law_AMS,SheWer_CLE} for general background and complements.

If $Z=\sqrt \kappa B$, $B$ a standard Brownian motion, $\kappa>0$, the Loewner equation reads:
$$\frac{d}{dt}g_t(u)=\frac{2}{g_t(u)-Z_t}$$
with $g_0(u)=u$ for $u$ in the upper half-plane $\H=\{u:\Im(u)>0\}$. Let $\tau_u$ be the time of explosion of this ODE, $K_t=\overline\{u:\tau_u\leq t\}$ the ``hull". Then $g_t:\H\setminus K_t\rightarrow \H$ is a conformal equivalence (characterized by $g_t(u)=z+\frac{2t}u+o(u^{-1})$ for $z$ large). The random Loewner chain $(g_t)){t\geq 0}$ defines the Schramm-Loewner Evolution $\SLE_\kappa$.

It is known that $\gamma_t=\lim_{\eps\searrow 0}g_t^{-1}(i\eps)$ is a.s. a continuous path in $\overline\H$, the {\em trace} of the $\SLE$, s.t. $\H\setminus K_t$ is the unbounded connected component of $\H\setminus\gamma_{[0,t]}$. For $\kappa\leq 4$, it is a.s. a simple path in $\overline\H$ (with $\gamma_0=W_0$, $\gamma_t\in\H$ for $t>0$, and $\lim_t\gamma_t=\infty$). If $\gamma$ is an $\SLE_\kappa$ in $\H$ (started at $0$, aiming at $\infty$), $D$ is simply-connected domain with marked boundary points $x,y$, and $\phi:(\H,0,\infty)\rightarrow (D,x,y)$ is a conformal equivalence, then $\phi(\gamma)$ is (by definition and up to time change) a chordal $\SLE_\kappa$ in $(D,x,y)$ (conformal invariance)

From the Markov property of Brownian motion, we see that the law of a chordal $\SLE_\kappa$ in $(\H,0,\infty)$ conditionally on $K_t$ is that of a chordal $\SLE_\kappa$ in $(\H\setminus K_t,\gamma_t,\infty)$ (Domain Markov property). The chordal $\SLE$ measures are characterized by the conformal invariance and domain Markov properties.

A useful variant of $\SLE_\kappa$ is given by the so-called $\SLE_\kappa(\rho)$'s ($\kappa>0,\rho\in\R$). We distinguish on the real line a seed $Z_0$ and a force point $W_0$ and consider the SDE
$$\left\{\begin{array}{ll}
dZ_t&=\sqrt\kappa dB_t+\frac{\rho}{Z_t-W_t}dt\\
dW_t&=\frac{2}{W_t-Z_t}dt
\end{array}\right.$$
with $g_t$ defined from the driving process $Z$ as above, so that $W_t=g_t(W_0)$. Existence of solutions (in particular extending these solutions past $\tau_{W_0}$) depends on $\kappa,\rho$ and will be discussed in some details in Section \ref{Sec:kapparho}.

From the point of view of statistical mechanics, it is natural to consider loop ensembles. We start from a simply connected $D$, which (up to conformal equivalence and for simplicity) we can take to be the unit disk. The loop-space ${\mc L}_D$ consists of $1-1$ mappings $\U\setminus \overline D$ ($\U$ the unit circle), up to composition by a homeomorphism of $\U$. This is metrized by
$$d(\gamma,\gamma')=\inf_{\sigma:\U\rightarrow\U}\|\gamma-\gamma'\circ\sigma\|_\infty$$
where $\sigma$ runs over homeomorphisms $\U\rightarrow\U$. Without the injectivity condition, this defines a Polish space; so ${\mc L}_D$ is a Lusin space. In a variant, one considers oriented loops (given up to reparameterization by a direct homeomorphism of $\U$).

A loop ensemble in $D$ is an ${\mc L}_D$-valued point process (supported on mutually disjoint loops) which is locally finite in the sense that, for any $\eps>0$, there are finitely loops of diameter $\geq\eps$ (in particular, there are countably many loops). The configuration can be metrized by declaring the configurations $(\gamma_i)_{i\in I}$, $(\gamma'_j)_{j\in J}$ are at distance $\leq\eps$ if they can be paired up to loops of diameter $\leq\eps$ so that paired loops are at distance $\leq\eps$ in ${\mc L}_D$. Classical tightness criteria (based on annular crossing estimates) are given in \cite{AizBur,AizBurNewWil}.

One may also more combinatorial encodings of the loop ensemble. Let $S$ be a countable dense subset of $D$; for $x_1,\dots,x_n\in S$, $N(x_1,\dots,x_n)$ denotes the number of loops in the loop ensemble with $x_1,\dots,x_n$ in the closure of their interior. For instance, if $x,y\in S$, on the event $\{N_{x,y}\geq 1\}$, the innermost loop disconnecting $x,y$ from $\partial D$ is the boundary of $\{z\in S: N_{x,y,z}=0\}$. So a 
(simple) loop ensemble can be reconstructed from the data $(N_T)_{T\subset S,|T|<\infty}$.

Much in the same way that $\SLE$ describes conformally invariant distributions on chordal curves, there is a 1-parameter family $\CLE_\kappa$ (the Conformal Loop Ensembles) of measures on loop ensembles, characterized by a suitable Domain Markov property \cite{SheWer_CLE}. We consider here nested $\CLE$s (so that each loop contains an identically distributed - up to conformal equivalence - $\CLE$, independent of the configuration outside of the loop) consisting of simple disjoint loops, which is the case when $\kappa\in(\frac 83,4]$. The two main approaches to $\CLE$ \cite{SheWer_CLE} are via branching exploration \cite{Sheff_explor} and loop soup percolation \cite{Wer_CRASloop}. We shall need the first one for computational purposes and the second one for a priori estimates (Lemma \ref{Lem:CLEarm}).

In the branching exploration description, one runs a $\SLE_{\kappa}(\kappa-6)$ started, say, from $(0,0^+)$ (see Section \ref{Sec:kapparho} for more details). This creates loops (corresponding to excursion of the real process $W-Z$); then the interior of these loops is explored recursively.

\subsection{$\tau$-functions in simply connected domains}\label{Sec:tauHP}

Here we will need to consider not the classical case of the punctured sphere, but a punctured simply-connected domain (which we can take to be the upper half-plane $\H=\{z\in\C:\Im z>0\}$); this will build on the spherical case and reflection arguments. In the situation of interest to us, we will also need to fix the multiplicative constant in front of the $\tau$-functions.

Consider the upper half-plane $\H$ with punctures $\lambda_1,\dots,\lambda_n\in\H$. Let $\gamma_i$ be a simple counterclockwise loop separating $z_i$ from the other punctures and rooted at $\infty$. We choose the $\gamma_i$'s to be disjoint (except at $\infty$) and in counterclockwise order seen from $\infty$. Classically, the fundamental group $\pi_1(\H\setminus\{\lambda_1,\dots,\lambda_n\})$ is the free group on $n$ generators $\gamma_1,\dots,\gamma_n$. Consequently, the data of a representation $\pi_1\rightarrow\SL_2(\R)$ is equivalent to the data of $n$ matrices $\rho(\gamma_1),\dots,\rho(\gamma_n)$.

Throughout we will choose these matrices to be $2\times 2$, real and unipotent.

We proceed by reflection and introduce punctures $\lambda_{\bar 1},\dots,\lambda_{\bar n}$ in the lower half-plane $-\H$, with the short hand notation
$$\bar k=2n+1-k$$
for $k=1,\dots,2n$ (this depends implicitly on the total number of punctures). We shall be in particular interested in the case where $\lambda_{\bar k}=\overline{\lambda_k}$ for all $k$.

We consider simple clockwise loops $\gamma_{\bar k}$, $k=1,\dots,n$ s.t. $\gamma_{\bar k}$ separates $z_{\bar k}$ from the other punctures, and $\gamma_1,\dots,\gamma_n,\gamma_{\bar n},\gamma_{\bar 1}$ are disjoint except at $\infty$ and listed in counterclockwise order seen from $\infty$. See Figure \ref{Fig:cut}.
\begin{figure}
\begin{center}\includegraphics[scale=.8]{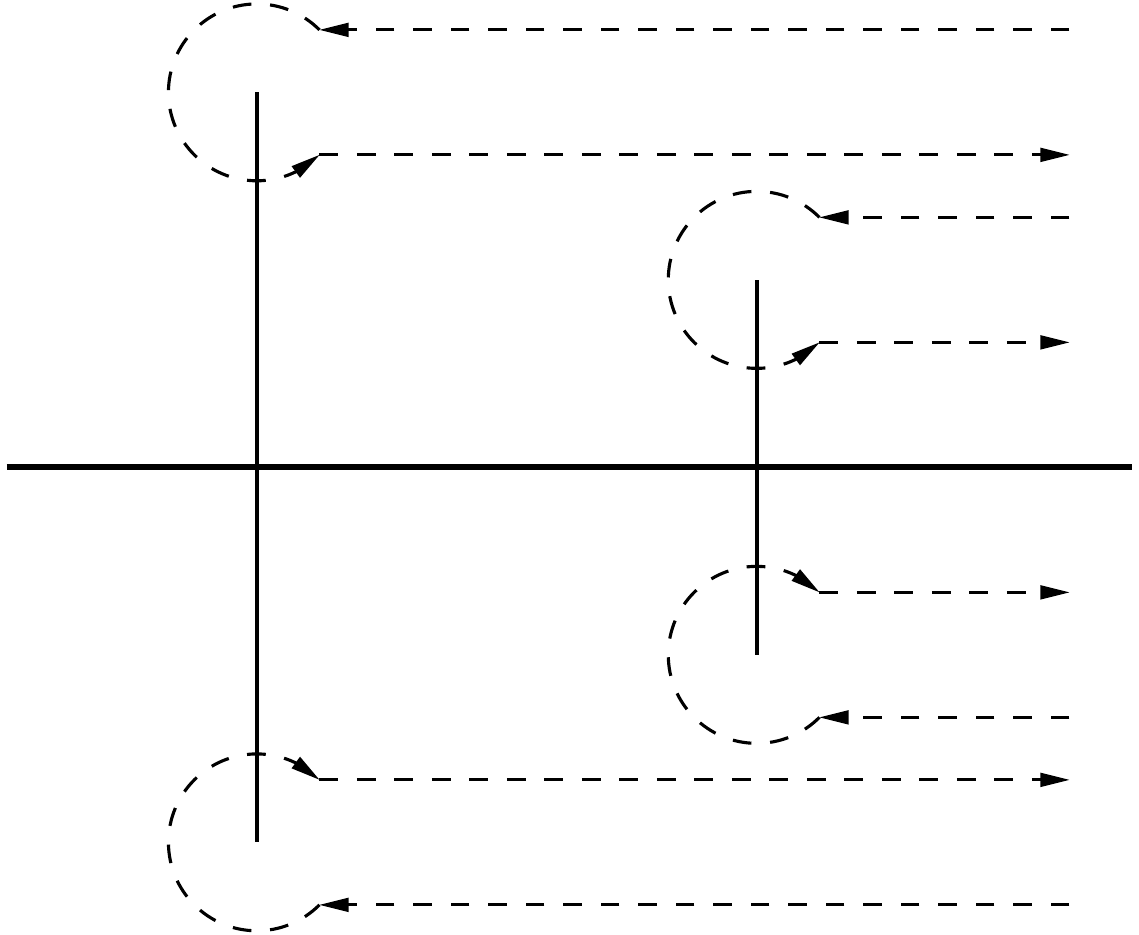}\end{center}
\caption{Branch cuts symmetric w.r.t. $\R$ (vertical lines) and corresponding loops (dashed).}\label{Fig:cut}
\end{figure}

Then $\pi_1(\hat \C\setminus\{\lambda_1,\dots,\lambda_{\bar 1}\})$ is the group with generators $\gamma_1,\dots,\gamma_{\bar 1}$ and relation
$$\gamma_1\dots\gamma_n\gamma_{\bar n}^{-1}\dots\gamma^{-1}_{\bar 1}=1$$
To a representation $\rho:\pi_1(\H\setminus\{z_1,\dots,z_n\})\rightarrow \SL_2(\R)$, we associate the representation $\tilde\rho:\pi_1(\hat\C\setminus\{\lambda_1,\dots,\lambda_{\bar 1}\})\rightarrow \SL_2(\R)$ specified by
$$\tilde\rho(\gamma_k)=\tilde\rho(\gamma_{\bar k})=\rho(\gamma_i)$$
for $k=1,\dots,n$.

Let us consider again the Schlesinger equation \eqref{eq:Schlesinger}. We denote by 
$$\Lambda=\{(\lambda_1,\dots,\lambda_{\bar 1})\in\H^n\times (-\H)^n: \lambda_i\neq\lambda_j{\rm\ for\ }i\neq j,{\rm }\lambda_{\bar k}=\overline{\lambda_k}{\rm\ for\ all\ }k\}$$
a $2n$-dimensional real analytic space.

\begin{Lem}
If for some $\lambda_0\in\Lambda$, %
$\overline{A_{\bar k}}(\lambda_0)=A_k(\lambda_0)$ for all $k$, then under \eqref{eq:Schlesinger} these conditions are still satisfied for $\lambda\in\Lambda$ near $\lambda_0$.
\end{Lem}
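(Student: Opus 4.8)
The plan is to show that the conditions $\overline{A_{\bar k}}=A_k$ (for all $k$) cut out a set that is invariant under the Schlesinger flow restricted to $\Lambda$, by a uniqueness argument for the relevant system of ODEs. First I would observe that, since $\lambda_0\in\Lambda$ and the motion stays in $\Lambda$ (this is part of the hypothesis: we only consider deformations with $\lambda_{\bar k}=\overline{\lambda_k}$), the displacement of the conjugate puncture is the conjugate of the displacement: $d\lambda_{\bar k}=\overline{d\lambda_k}$. Fix a path $t\mapsto\lambda(t)$ in $\Lambda$ starting at $\lambda_0$, and let $A_i(t)$ be the solution of the Schlesinger equations \eqref{eq:Schlesinger} along this path with initial data $A_i(\lambda_0)$. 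Set $B_i(t):=\overline{A_{\bar i}(t)}$ (with the involution $k\mapsto\bar k$ extended to all $2n$ indices, so $\bar{\bar i}=i$). The goal is to prove $B_i(t)=A_i(t)$ for all $i$ and small $t$; by hypothesis this holds at $t=0$.

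The key step is to check that $(B_i)_i$ also solves the Schlesinger equations along the same path. Start from \eqref{eq:Schlesinger} written for the index pair $(\bar i,\bar j)$:
\[
\partial_{\lambda_{\bar j}}A_{\bar i}=\frac{[A_{\bar i},A_{\bar j}]}{\lambda_{\bar i}-\lambda_{\bar j}}.
\]
Apply complex conjugation. Since complex conjugation is a ring anti-involution on matrices? No — it is an algebra homomorphism for the entrywise conjugation used here (it commutes with multiplication: $\overline{XY}=\bar X\,\bar Y$), so $\overline{[A_{\bar i},A_{\bar j}]}=[\overline{A_{\bar i}},\overline{A_{\bar j}}]=[B_i,B_j]$. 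On the left-hand side, $\overline{\partial_{\lambda_{\bar j}}A_{\bar i}}$: writing the flow in terms of the real parameter $t$ and using $d\lambda_{\bar j}=\overline{d\lambda_j}$, one gets that $B_i$ satisfies exactly the $(i,j)$-Schlesinger equation with the poles $\lambda_i$, because $\overline{\lambda_{\bar i}-\lambda_{\bar j}}=\lambda_i-\lambda_j$ (here I use $\lambda_{\bar i}=\overline{\lambda_i}$, valid on $\Lambda$). Concretely, in the differential form of \eqref{eq:Schlesinger}, $dA_{\bar i}=-\sum_{j\neq i}[A_{\bar i},A_{\bar j}]\,d\log(\lambda_{\bar i}-\lambda_{\bar j})$; conjugating gives $dB_i=-\sum_{j\neq i}[B_i,B_j]\,d\log(\lambda_i-\lambda_j)$, which is Schlesinger for $(B_i)_i$. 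Then $(A_i)_i$ and $(B_i)_i$ solve the same first-order ODE system with the same initial condition at $\lambda_0$, so by the Picard–Lindelöf uniqueness theorem (applicable away from the diagonal $\Delta$, where the coefficients are real-analytic) they coincide for $\lambda$ near $\lambda_0$ in $\Lambda$. Since $\Lambda$ is connected near $\lambda_0$, this gives $\overline{A_{\bar k}}=A_k$ throughout a neighborhood, as claimed.

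The one point requiring a little care — and the main (minor) obstacle — is the bookkeeping of the index involution together with the complex conjugation of the $d\lambda$'s: one must confirm that conjugating the Schlesinger system for the "barred" indices produces precisely the Schlesinger system for the "unbarred" indices and not some twisted variant, and in particular that the constraint $\sum_i A_i=0$ (which fixes the diagonal derivatives $\partial_{\lambda_i}A_i$) is also preserved — but this follows immediately since $\sum_i B_i=\overline{\sum_i A_{\bar i}}=\overline{\sum_i A_i}=0$. Everything else is the standard uniqueness-of-solutions mechanism; no estimates are needed beyond local existence and uniqueness for the Schlesinger flow on $\Lambda$.
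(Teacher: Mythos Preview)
Your proposal is correct and follows essentially the same approach as the paper: define $B_i=\overline{A_{\bar i}}$, check that (on $\Lambda$, where $\lambda_{\bar i}=\overline{\lambda_i}$) the tuple $(B_i)_i$ solves the same Schlesinger system with the same initial data, and conclude by uniqueness/connexity. The paper compresses this into a single sentence, but the mechanism is identical.
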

\begin{proof}
This follows from observing that by conjugating \eqref{eq:Schlesinger}, 
$$(\overline{A_{\bar 1}},\dots,\overline{A_{\bar n}},\overline{A_n},\dots,\overline{A_1})$$
gives another solution restricted to $\Lambda$ with the same initial condition and one concludes by connexity.
\end{proof}
This gives a reduction to $n$ $2\times 2$ matrices and $n$ - complex - variables (rather than $2n$ matrices and $2n$ variables).

Then we extend the parameter space $\Lambda$ to the following boundary component:
$$\Lambda_\R=\{(x_1,\dots,x_n,x_n\dots,x_1): x_1<\cdots<x_n\}\subset\R^{2n}$$

\begin{Lem}\label{Lem:fuchsbound}
If $N_1,\dots,N_n$ are real nilpotent matrices in $M_2(\R)$, there is a solution of \eqref{eq:Schlesinger} extending continuously to $\Lambda_\R$ s.t. $A_k=-\frac{1}{2i\pi}N_k=-A_{\bar k}$ on $\Lambda_\R$. The corresponding representation $\tilde\rho$ of $\pi_1(\hat\C\setminus\{\lambda_1,\dots,\bar\lambda_1\})$ is given by $\tilde\rho(\gamma_k)=\tilde\rho(\gamma_{\bar k})=\Id_2+N_k$ for $k=1,\dots,n$.
\end{Lem}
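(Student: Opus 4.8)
The plan is to construct the desired solution of the Schlesinger equations by a limiting procedure, starting from the interior of the parameter space $\Lambda$ and approaching the boundary component $\Lambda_\R$ where the punctures collapse onto the real axis in conjugate pairs. The first step is to show that, for $\lambda \in \Lambda$ close to such a boundary point and for $A_k$ chosen with small traceless nilpotent-like blocks, the Schlesinger flow is well-posed and preserves both the reality constraint $\overline{A_{\bar k}} = A_k$ (by the preceding Lemma) and the symmetry between the $k$ and $\bar k$ indices forced by the symmetric configuration $\lambda_{\bar k} = \overline{\lambda_k}$; together these reduce the system to $n$ matrices $A_1,\dots,A_n$ depending holomorphically on the $n$ complex variables $\lambda_1,\dots,\lambda_n \in \H$. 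Then I would invoke the existence theory for Schlesinger deformations with prescribed monodromy in the regime of small eigenvalues: given the nilpotent matrices $N_k$, one wants $A_k$ with $\exp(2i\pi A_k)$ conjugate (within the local system) to $\Id_2 + N_k$, and one normalizes so that on the boundary $\Lambda_\R$ one has exactly $A_k = -\tfrac{1}{2i\pi} N_k$. Since the $N_k$ are nilpotent, $2i\pi A_k = -N_k$ is itself nilpotent and $\exp(-N_k) = \Id_2 - N_k + \tfrac12 N_k^2 = \Id_2 - N_k$ (as $N_k^2 = 0$ in $M_2$), so the local monodromy $\tilde\rho(\gamma_k)$ would be $\Id_2 - N_k$; to land on $\Id_2 + N_k$ one either replaces $N_k$ by $-N_k$ in the normalization or chooses the orientation/branch of the loop accordingly — I would fix conventions so the statement comes out as written, with $\tilde\rho(\gamma_k) = \tilde\rho(\gamma_{\bar k}) = \Id_2 + N_k$.

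The heart of the argument is the continuous extension to the degenerate boundary $\Lambda_\R$. Here I would use the theory of confluence / coalescence of regular singular points: when $\lambda_k$ and $\lambda_{\bar k} = \overline{\lambda_k}$ both tend to the same real point $x_k$, the two simple poles with residues $A_k$ and $A_{\bar k} = \overline{A_k}$ merge. The key point is that with traceless residues whose eigenvalues tend to $0$ (here, nilpotent limiting residues $\mp\tfrac{1}{2i\pi} N_k$), the merging is tame: the combined local monodromy around a small loop enclosing both $\lambda_k$ and $\lambda_{\bar k}$ is $\tilde\rho(\gamma_k)\tilde\rho(\gamma_{\bar k})^{-1}$, which for the symmetric representation $\tilde\rho(\gamma_k) = \tilde\rho(\gamma_{\bar k})$ is trivial, so no genuine singularity survives the confluence on the real axis away from the $x_k$ themselves, and at each $x_k$ one is left with a single regular singular point of nilpotent type. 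Concretely, one writes $A_k = A_k(\lambda_1,\dots,\lambda_n)$, uses the Schlesinger equations to get a priori bounds on $A_k$ and its $\lambda$-derivatives that are uniform as $\Im\lambda_k \searrow 0$ (the dangerous terms $[A_k, A_{\bar k}]/(\lambda_k - \lambda_{\bar k}) = [A_k,\overline{A_k}]/(2i\,\Im\lambda_k)$ are controlled because $[A_k,\overline{A_k}]$ vanishes in the nilpotent limit at the rate needed), and then extends by continuity; the reflection symmetry $\overline{A_{\bar k}} = A_k$ together with $\lambda_{\bar k} = \overline{\lambda_k}$ is exactly what makes this blow-up-prone commutator small. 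The identification of $\tilde\rho(\gamma_k)$ on the boundary then follows because the monodromy is locally constant along the Schlesinger flow and is computed in the usual way from the local exponents $\exp(2i\pi A_k)$ of the Fuchsian system at the (now real, simple) singular points.

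The main obstacle, as signaled, is precisely the uniform control of the Schlesinger flow up to the degenerate stratum $\Lambda_\R$: generic isomonodromic deformations develop movable singularities (this is the Painlevé phenomenon), so one must genuinely use the smallness of the eigenvalues of the $A_i$ (the "eigenvalues of modulus $<\tfrac12$" hypothesis appearing earlier, here pushed to the nilpotent limit) to rule out blow-up and to guarantee that the solution exists on a full neighborhood of the boundary point and extends continuously. I would handle this either by a fixed-point / Picard iteration for the coupled system \eqref{eq:Schlesinger} in a weighted norm that already builds in the expected boundary behavior $A_k \to -\tfrac{1}{2i\pi} N_k$, or by appealing to the Riemann–Hilbert correspondence in the small-monodromy regime (the RH problem with jumps $N_i = \Id_2 + $ nilpotent and growth $o(|z-\lambda_i|^{-1/2})$ set up at the end of Section \ref{Sec:tau}, whose solvability and continuity in the parameters $\lambda$ — including the confluent limit — can be established by the standard contour-deformation and small-norm arguments), and then reading off $A_i = \mathrm{Res}_{\lambda_i}(\partial_z Y_0\, Y_0^{-1})$. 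The reflection structure is then automatic from the symmetry of the RH data under $z \mapsto \bar z$.
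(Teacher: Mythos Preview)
Your outline correctly isolates the crux --- the singular term $[A_k,A_{\bar k}]/(\lambda_k-\lambda_{\bar k})$ in the Schlesinger system --- and the mechanism that should tame it, namely that the reflection symmetry forces $A_k+\overline{A_k}$ to be small near $\Lambda_\R$. But the step you leave as ``$[A_k,\overline{A_k}]$ vanishes in the nilpotent limit at the rate needed'' is precisely the content of the proof, and you have not supplied it. The paper does this by a direct differential-inequality bootstrap: setting $M(y)=\sup\sum_j\|A_j\|$ and $m(y)=\sup\|A_1+\overline{A_1}\|$ (sup over a compact slab at height $y$), one reads off from the $(x,y)$-decomposition of \eqref{eq:Schlesinger} that $\partial_y(A_1+\overline{A_1})$ is regular while $\partial_y A_1=[A_1,A_1+\overline{A_1}]/y+(\text{reg})$, giving coupled integral inequalities $m(y')\le m(y)+k\int_y^{y'}M^2$ and $M(y')\le M(y)+k\int_y^{y'}(m/u+M)M$. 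Starting from data with $m(y)\le cy$ at small $y$, one bootstraps $m(y')\le c'y'$ and $M$ bounded up to a fixed height. This is the concrete replacement for your asserted ``a priori bounds uniform as $\Im\lambda_k\searrow 0$''.

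There is also a structural difference in direction. You propose to start in the interior of $\Lambda$ and push the solution down to $\Lambda_\R$, which is exactly where Painlev\'e-type blowup threatens; your small-norm RH alternative would need a genuine argument for the confluence $\lambda_k,\lambda_{\bar k}\to x_k$ (the contour degenerates --- this is not the textbook small-norm situation). The paper instead goes the other way: it builds a \emph{sequence} of solutions with initial data prescribed at heights $y_n\searrow 0$ (where one simply imposes $A_k=-\tfrac{1}{2i\pi}N_k$, so $m(y_n)=0$), uses the inequalities above to get equicontinuity uniformly in $n$, and extracts a H\"older-continuous limit up to $y=0^+$. Finally, your monodromy identification via ``local exponents $\exp(2i\pi A_k)$'' only gives the conjugacy class of $\tilde\rho(\gamma_k)$; to pin down $\tilde\rho(\gamma_k)=\Id_2+N_k$ on the nose (and resolve the sign you flagged), the paper computes $Y_0$ explicitly in the regime $y_1=\cdots=y_n=\eps\ll 1$: away from the punctures $Y_0=\Id_2+O(\eps)$ since paired residues nearly cancel, and near the pair $x_j\pm i\eps$ one has $Y_0(z)\simeq\big((z-\lambda_j)/(z-\overline{\lambda_j})\big)^{A_j}$, yielding $\rho(\gamma_j)=\exp(-2i\pi A_j)=\exp(N_j)=\Id_2+N_j$.
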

\begin{proof}
The only issue comes from terms of type $\partial_{\lambda_k}A_{\bar k}=[A_{\bar k},A_k]/(\lambda_{\bar k}-\lambda_k)$; the apparent singularity at $\lambda_k=\lambda_{\bar k}$ disappears since then $A_{\bar k}=-A_k$.

More precisely, we parameterize $\lambda_k=x_k+iy_k$, $\lambda_{\bar k}=x_k-iy_k$, $y_k\geq 0$, and $A_{\bar k}=\overline{A_k}$.
Then
\begin{align*}
\partial_{x_k}A_k&=(\partial_{\lambda_k}+\partial_{\lambda_{\bar k}})A_k=\partial_{\lambda_{\bar k}}A_k-\partial_{\lambda_k}A_{\bar k}-\sum_{j\neq k,\bar k}\partial_{\lambda_k}A_j\\
&=-\sum_{1\leq j\leq n,j\neq k}\left[\frac{A_j}{\lambda_j-\lambda_k}+\frac{\overline{A_j}}{\overline{\lambda_j}-\lambda_k},A_k\right]
\end{align*}
and 
\begin{align*}
\partial_{y_k}A_k&=-i(\partial_{\lambda_k}-\partial_{\lambda_{\bar k}})A_k=i\partial_{\lambda_{\bar k}}A_k+i\partial_{\lambda_k}A_{\bar k}+i\sum_{j\neq k,\bar k}\partial_{\lambda_k}A_j\\
&=\frac{[A_k,\overline A_k]}{y_k} +i\sum_{1\leq j\leq n,j\neq k}\left[\frac{A_j}{\lambda_j-\lambda_k}+\frac{\overline{A_j}}{\overline{\lambda_j}-\lambda_k},A_k\right]
\end{align*}
We want to show that this extends continuously to $y_k=0^+$ provided $[A_k,\overline{A_k}]$ is small enough for some small $y_k$. Up to relabelling we may assume $k=1$.
Let $K_0$ be a compact neighborhood of $\{(\lambda_j)_{j>1}\}$ (with $\lambda_{j'}\neq\lambda_j$ for $j\neq j'$ in $K$) and 
\begin{align*}
M(y)&=\sup_{[a+iy,b+iy]\times K_0}\{\sum_j\|A_j\|\}\\
m(y)&=\sup_{[a+iy,b+iy]\times K_0}\{\|A_1+\overline A_1\|\}
\end{align*}

We have
\begin{align*}
\partial_{y_1}A_1&=\frac{[A_1,A_1+\overline{A_1}]}{y_1}+(reg)
&\partial_{x_1}A_j&=(reg)\\
\partial_{y_1}(A_1+\overline{A_1})&=(reg)
&\partial_{x_1}(A_1+\overline{A_1})&=[(reg),A_1+\overline{A_1}]+y_1(reg)
\end{align*}
where the $(reg)$ have smooth coefficients up to $y_1=0$.

Then we estimate for $0<y<y'$
\begin{align*}
M(y')\leq&M(y)+k\int_{y}^{y'}(\frac{m(u)}u+M(u))M(u)du\\
m(y')\leq&m(y)+k\int_{y}^{y'}M(u)^2du
\end{align*}
with $k$ a constant depending on $K_0$. %
Then start from $y$ small and assume that $m(y)\leq cy$. Then for $C>0$ there is $c'>0$ s.t. $m(y')\leq c'y'$ as long as $M(y')\leq C$; from the first line this holds up to some $y'$ small but independent of $y$.

Consider a sequence of solutions $A_j^{(n)}$ s.t. $A_1^{(n)}(a+iy_n,\lambda_2^0,\dots)=(2i\pi)^{-1}M$, $M$ a fixed real matrix, and $A_j^{(n)}(a+iy_n,\lambda_2^0,\dots)$ are fixed matrices. Then the earlier estimates show that the sequence $A_j^{(n)}$ is equicontinuous in $([a,b]\times[0,y_0])\times K_0$ and consequently one can extract a subsequence converging to a solution of \eqref{eq:Schlesinger} which has a H\"older continuous extension up to $y_1=0^+$.

By isomonodromy, one can compute $\rho$ in the case $y_1=\cdots=y_n=\eps\ll 1$. In this case one can solve \eqref{eq:Fuchs} on a contour running along the real line except for semicircles around the $x_j$'s. If $\eps_0$ is small but fixed, $Y_0(x_j-\eps)=\Id_2+O(\eps)$ (along a path staying $\eps_0$-away from punctures); then one solves on the segment $[x_j-\eps_0,x_j+\eps_0]$, and then back to infinity staying away from the singularities. So the monodromy $\rho(\gamma_j)$ is within $O(\eps)$ of the monodromy when there is only one pair of punctures $x_j\pm i\eps$. In that case we solve explicitly
$$Y_0(z)=\left(\frac{z-\lambda_1}{z-\overline{\lambda_1}}\right)^{A_j}=
\exp\left(A_j\log\frac{z-\lambda_j}{z-\overline{\lambda_j}}\right)$$
so that $\rho(\gamma_j)=\exp(-{2i\pi}A_j)$. Remark that $\exp(N)=\Id_2+N$ for $N$ a $2\times 2$ nilpotent matrix.
\end{proof}
Throughout the solution of \ref{eq:Schlesinger} is fixed by this boundary condition on $\Lambda_\R$; in particular the $A_j$'s are always nilpotent. It follows that the fundamental solution (with $Y_0(\infty)=\Id_2$) of \eqref{eq:Fuchs} is $O(|\log(z-\lambda_i)|)$ near punctures. As discussed earlier, it is the unique ($\SL_2(\C)$-valued) such solution of the corresponding RH problem.

Then we can define consider the $\tau$-function defined up to multiplicative constant by \eqref{eq:dlogtau}; it is a function of the position of the punctures $\lambda_i$ and of the unipotent matrices $\rho(\gamma_i$ (and the homotopy classes of the $\gamma_i$'s).

\begin{Lem}\label{Lem:taubound}
The $\tau$-function extends to continuously to $\Lambda_\R$ and is constant there.
\end{Lem}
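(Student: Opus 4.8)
The plan is to combine the boundary behavior of the matrices $A_j$ near $\Lambda_\R$ (established in Lemma \ref{Lem:fuchsbound}) with the explicit formula \eqref{eq:dlogtau} for $d\log\tau$. First I would recall that on $\Lambda$ we have $A_{\bar k}=\overline{A_k}$, so the $2n$-variable form $\omega$ can be rewritten purely in terms of the $n$ complex variables $\lambda_1,\dots,\lambda_n$ and the $n$ matrices $A_1,\dots,A_n$; writing $\lambda_k=x_k+iy_k$ as in the previous proof, $\omega$ becomes a real $1$-form in the $x_k$'s and $y_k$'s. The potentially singular contributions to $\omega$ as $y_k\to 0^+$ are exactly the ``diagonal'' terms $\Tr(A_kA_{\bar k})\,d(\lambda_k-\lambda_{\bar k})/(\lambda_k-\lambda_{\bar k}) = \Tr(A_k\overline{A_k})\,d(2iy_k)/(2iy_k)$, whose coefficient is $\Tr(A_k\overline{A_k})/y_k$. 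The key point is that on $\Lambda_\R$ one has $A_k=-A_{\bar k}$, i.e. $A_k=-\overline{A_k}$, and moreover $A_k$ is nilpotent there; in fact from Lemma \ref{Lem:fuchsbound} the estimate $\|A_k+\overline{A_k}\|=m(y)=O(y)$ holds, which forces $\Tr(A_k\overline{A_k}) = -\Tr(A_k^2)+\Tr(A_k(A_k+\overline{A_k})) = O(y_k)$ since $A_k^2=0$. Hence $\Tr(A_k\overline{A_k})/y_k$ stays bounded and the apparent singularity of $\omega$ at $y_k=0$ is removable; all remaining terms in \eqref{eq:dlogtau} have coefficients that extend continuously (indeed Hölder-continuously) up to $\Lambda_\R$ by Lemma \ref{Lem:fuchsbound}. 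So $\omega$ extends continuously to a neighborhood of $\Lambda_\R$ in $\overline{\Lambda}$, and since $d\omega=0$ on the interior, $\tau=\exp\int\omega$ extends continuously to $\Lambda_\R$.

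Next I would show $\tau$ is \emph{constant} on $\Lambda_\R$. The parameter space $\Lambda_\R=\{x_1<\cdots<x_n\}$ is connected, so it suffices to show $d\log\tau$ restricted to $\Lambda_\R$ vanishes, i.e. that the pullback of $\omega$ to the boundary stratum $\{y_1=\cdots=y_n=0\}$ is zero. On $\Lambda_\R$ we have $A_k=-\frac{1}{2i\pi}N_k$ with $N_k$ a \emph{fixed} real nilpotent matrix (the boundary condition that pins down the solution), so $A_kA_j$ is constant along $\Lambda_\R$; thus the only way $\omega$ could be nonzero along $\Lambda_\R$ is through the differentials $d(\lambda_i-\lambda_j)/(\lambda_i-\lambda_j)$. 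For the tangent directions to $\Lambda_\R$ (variations of the $x_k$'s with $y_k\equiv 0$), and for $i,j$ both ``unbarred'' or both ``barred,'' $\lambda_i-\lambda_j=x_i-x_j$ is real and nonzero, contributing $d(x_i-x_j)/(x_i-x_j)$ — but these come in pairs $(k,\bar k)$... more directly: I would compute $\partial_{x_k}\log\tau$ on $\Lambda_\R$ using the formula $\partial_{\lambda_i}\log\tau=\Tr(A_iR_i)$ with $R_i=\sum_{j\neq i}A_j/(\lambda_j-\lambda_i)$ from Section \ref{Sec:tau}, together with $\partial_{x_k}=\partial_{\lambda_k}+\partial_{\lambda_{\bar k}}$. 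Since $A_k\overline{A_k}=A_k(-A_k)=-A_k^2=0$ for nilpotent $A_k$, the term in $R_k$ coming from $j=\bar k$ drops out after taking $\Tr(A_k\cdot)$; pairing up the $\partial_{\lambda_k}$ and $\partial_{\lambda_{\bar k}}$ contributions and using $A_{\bar k}=\overline{A_k}=-A_k$, $\Tr(A_kA_j)=\Tr(A_{\bar k}A_{\bar j})$, and the symmetry $\lambda_{\bar k}-\lambda_{\bar j}=\overline{\lambda_k-\lambda_j}=-(\lambda_k-\lambda_j)$ on $\Lambda_\R$, one checks the whole expression telescopes to zero. Hence $d\log\tau\big|_{\Lambda_\R}=0$ and $\tau$ is constant on the connected set $\Lambda_\R$.

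The main obstacle I anticipate is the bookkeeping in the second step: carefully matching the $\partial_{\lambda_k}$ and $\partial_{\lambda_{\bar k}}$ contributions to $\partial_{x_k}\log\tau$ and verifying they cancel, while correctly handling the boundary-stratum limit where several pairs $\lambda_k,\lambda_{\bar k}$ simultaneously collide onto the real axis. In particular one must be sure the formula $\partial_{\lambda_i}\log\tau=\Tr(A_iR_i)$, derived on the open stratum $\Lambda$, passes to the limit $\Lambda_\R$ — which it does by the continuous (Hölder) extension of all the $A_j$'s from Lemma \ref{Lem:fuchsbound} and the removability argument of the first paragraph. A clean alternative to the explicit cancellation is to invoke the reflection symmetry directly: the map $(\lambda_1,\dots,\lambda_n)\mapsto(\overline{\lambda_1},\dots,\overline{\lambda_n})$ combined with $A_k\mapsto\overline{A_k}$ is a symmetry of the Schlesinger flow (by the Lemma following the definition of $\Lambda$), under which $\omega\mapsto\overline{\omega}$; restricting to the fixed-point locus $\Lambda_\R$ forces the real $1$-form $\omega|_{\Lambda_\R}$ to equal its own conjugate-pullback, and combined with the antisymmetry coming from $A_k=-A_{\bar k}$ (which flips the sign of each $\Tr(A_iA_j)$ term that survives) this pins it to zero.
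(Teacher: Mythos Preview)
Your approach matches the paper's: isolate the diagonal terms $\Tr(A_kA_{\bar k})\,d(\lambda_k-\lambda_{\bar k})/(\lambda_k-\lambda_{\bar k})$ as the only potentially singular pieces of $\omega$ near $\Lambda_\R$, kill them using nilpotency together with the estimate on $A_k+\overline{A_k}$ (your $O(y_k)$ is in fact sharper than the paper's H\"older bound $O(y_k^\alpha)$), and then read off constancy along $\Lambda_\R$. One small correction in your second step: on $\Lambda_\R$ one has $\lambda_{\bar k}-\lambda_{\bar j}=x_k-x_j=+(\lambda_k-\lambda_j)$, not $-(\lambda_k-\lambda_j)$; the cancellation you actually want pairs $j$ with $\bar j$ \emph{inside} each $\Tr(A_kR_k)$, since on $\Lambda_\R$ we have $A_{\bar j}=-A_j$ and $\lambda_{\bar j}=\lambda_j$, so $\Tr(A_kA_j)/(\lambda_j-\lambda_k)+\Tr(A_kA_{\bar j})/(\lambda_{\bar j}-\lambda_k)=0$ term by term (and the leftover $j=\bar k$ term drops by nilpotency, as you note).
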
 
\begin{proof}
Similarly to Lemma \ref{Lem:fuchsbound}, when solving \ref{eq:dlogtau} near $\Lambda_\R$, the only potential issue comes from the term
$$\Tr(A_iA_{\bar i})\frac{d(\lambda_i-\lambda_{\bar i})}{\lambda_i-\lambda_{\bar i}}$$
From Lemma \ref{Lem:fuchsbound} we know that $A_i$ has a H\"older continuous extension to the boundary and $A_{\bar i}=\overline{A_i}=-A_i$ on the boundary (viz. when $\lambda_i\in\R$). Since $A_i$ is nilpotent, $\Tr(A_iA_{\bar i})=0$ when $\lambda_i\in\R$ and
consequently
$$\frac{\Tr(A_iA_{\bar i})}{\Im(\lambda_i)}=O(\Im(\lambda_i)^{\alpha-1})$$
for some $\alpha>0$. It follows that $\tau$ itself has a finite limit as $\lambda_i\rightarrow\R$ which is locally constant in $\Re(\lambda_i)$.
\end{proof}
Consequently we can fix the multiplicative constant s.t. $\tau\equiv 1$ on $\Lambda_\R$.

Then we proceed to check that $\tau$ is essentially independent of choices, viz. the ordering of the punctures and the generators $\gamma_1,\dots,\gamma_n$ (the fundamental group of $\Lambda$ is a pure braid group; solving \eqref{eq:dlogtau} along non-contractible loops in the configuration space $\Lambda$ a priori generates a character of that group). See Figure \ref{Fig:braid}.
\begin{figure}
\begin{center}\includegraphics[scale=.8]{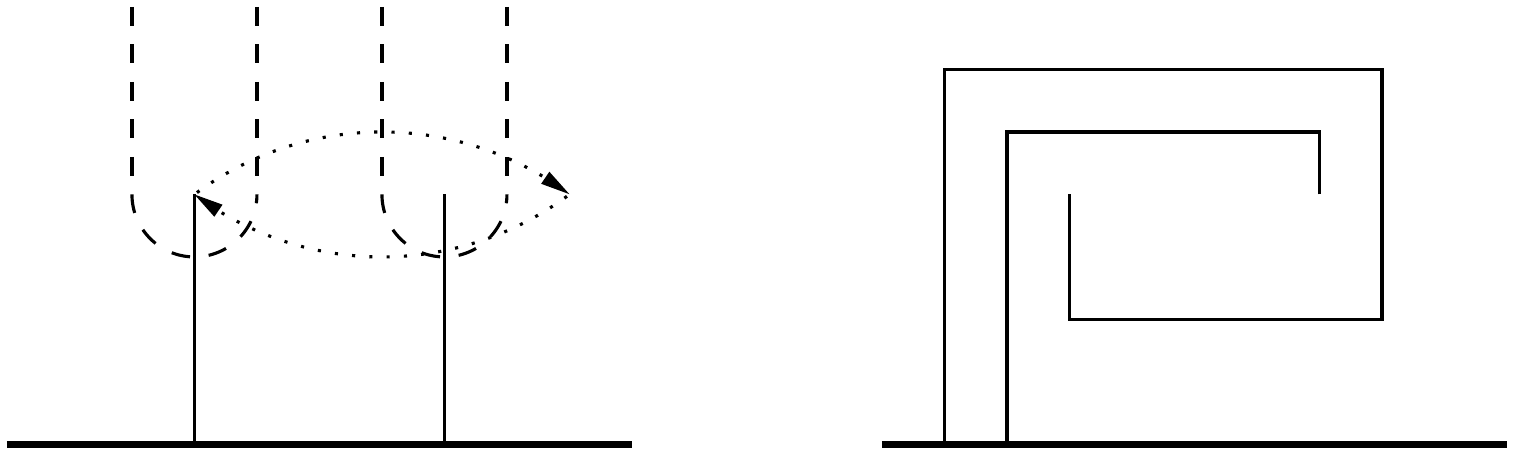}\end{center}
\caption{Effect of braiding on branch cuts. Left: branch cuts (solid) and loops (dashed) before braiding (the left puncture is moved along the dotted cycle). Right: deformed branch cuts after braiding.}\label{Fig:braid}
\end{figure}

\begin{Lem}\label{Lem:tauchoice}
The $\tau$-function depends only on the set $\{\lambda_1,\dots,\lambda_n\}$ and the representation $\rho:\pi(\H\setminus\{\lambda_1,\dots,\lambda_n\})\rightarrow\SL_2(\R)$, up to conjugacy.
\end{Lem}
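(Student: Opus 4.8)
The plan is to reduce the statement to an invariance of the $\tau$-function under two elementary operations: simultaneous conjugation of all monodromy matrices $\rho(\gamma_i)$ by a fixed element of $\SL_2(\R)$, and change of the generating system $\gamma_1,\dots,\gamma_n$ of $\pi_1(\H\setminus\{\lambda_1,\dots,\lambda_n\})$ (equivalently, reordering the punctures and braiding). Since both the defining $1$-form $\omega$ of \eqref{eq:dlogtau} and the boundary normalization $\tau\equiv 1$ on $\Lambda_\R$ (Lemma \ref{Lem:taubound}) are intrinsic once the $A_j$'s are fixed, it suffices to show that the $A_j$'s themselves, produced by the Riemann--Hilbert characterization discussed after Lemma \ref{Lem:fuchsbound}, transform correctly. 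For conjugation this is immediate: replacing $Y_0$ by $GY_0G^{-1}$ (equivalently $A_j\mapsto GA_jG^{-1}$) with $G\in\SL_2(\R)$ fixed leaves $\Tr(A_iA_j)$ unchanged for all $i,j$, hence leaves $\omega$ and therefore $\tau$ unchanged; the reflection symmetry $\overline{A_{\bar k}}=A_k$ and the boundary condition on $\Lambda_\R$ are preserved since $G$ is real.

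The substantive point is independence of the choice of generators, which amounts to single-valuedness of $\tau$ as a function on the configuration space of (unordered) punctures $\Lambda/\mathfrak{S}_n$. First I would note that the fundamental group of the space of ordered configurations $\Lambda$ (with the real boundary faces $\Lambda_\R$ removed) is a pure braid group, and adding the permutation of punctures gives the full braid group $B_n$; solving $d\log\tau=\omega$ along a non-contractible loop in this space a priori produces a multiplicative character of $B_n$. The key step is to evaluate this character and show it is trivial. For this I would transport the loop down to the boundary stratum $\Lambda_\R$: by Lemma \ref{Lem:taubound}, $\tau\equiv 1$ on a neighborhood of $\Lambda_\R$ in $\Lambda$, so any braid loop can be deformed (staying in the domain of definition of the isomonodromic family, which by Lemma \ref{Lem:fuchsbound} includes a full neighborhood of $\Lambda_\R$) to a loop arbitrarily close to $\Lambda_\R$, along which $\omega$ integrates to something arbitrarily small; since the monodromy is a constant (independent of the deformation), it must equal $1$. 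One has to check that the identification $\tilde\rho(\gamma_k)=\tilde\rho(\gamma_{\bar k})=\Id_2+N_k$ of the doubled representation is compatible with the braid action, i.e.\ that braiding in $\H$ corresponds to a symmetric braiding of the doubled puncture configuration preserving the reflection constraint $\lambda_{\bar k}=\overline{\lambda_k}$; this is visible from Figure \ref{Fig:cut} and Figure \ref{Fig:braid}.

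Next I would address the effect of a braid move on the monodromy data itself: a Hurwitz move replaces, say, $(\gamma_i,\gamma_{i+1})$ by $(\gamma_i\gamma_{i+1}\gamma_i^{-1},\gamma_i)$, so the unipotent matrices $M_i=\Id_2+N_i$ get replaced by $(M_iM_{i+1}M_i^{-1},M_i)$; I must verify that the new tuple still consists of unipotents close to the identity (clear, as conjugation preserves unipotence and the $C^0$-size is controlled) so that Lemma \ref{Lem:fuchsbound} still applies and furnishes an isomonodromic family with the same $\tau\equiv1$ boundary normalization. Combining this with the contractibility argument of the previous paragraph shows that the two $\tau$-functions — before and after the braid — agree near $\Lambda_\R$ and hence everywhere by analytic continuation, since both satisfy the same closed equation $d\log\tau=\omega$ with the same normalization. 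The main obstacle I anticipate is the bookkeeping in the previous paragraph: making precise that a braid loop in the \emph{unordered, reflection-symmetric} configuration space really can be homotoped into the region where Lemma \ref{Lem:taubound} gives $\tau\equiv1$, without leaving the locus where the isomonodromic solution (pinned by the $\Lambda_\R$ boundary condition) exists and stays in the regime of small unipotent monodromies; this is where one genuinely uses that local monodromies are unipotent rather than merely loxodromic, so that the accumulation of the potential singular locus $\Delta$ does not block the homotopy.
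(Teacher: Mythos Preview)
Your treatment of conjugation invariance matches the paper's and is fine: $\omega$ depends only on the $A_j$'s, which are recovered from $Y_0$, which is determined by $\rho$ alone; conjugation leaves the traces $\Tr(A_iA_j)$ unchanged.

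The braid argument, however, has a genuine gap. You assert that a pure braid loop in $\Lambda$ can be homotoped ``arbitrarily close to $\Lambda_\R$'' so that $\oint\omega$ becomes arbitrarily small (and you also write, incorrectly, that $\tau\equiv 1$ on a \emph{neighborhood} of $\Lambda_\R$; Lemma~\ref{Lem:taubound} only gives a continuous extension with constant boundary value). The topological obstruction is this: along any loop in which $\lambda_1$ winds once around $\lambda_2$ inside $\H$, there is a time at which $\Re\lambda_1=\Re\lambda_2$, and if both imaginary parts are $\leq\varepsilon$ there, then $|\lambda_1-\lambda_2|\leq 2\varepsilon$. Hence a braid loop pushed toward $\Lambda_\R$ is forced to approach the diagonal $\{\lambda_1=\lambda_2\}$. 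Near that diagonal the Schlesinger flow is genuinely singular ($\partial_{\lambda_1}A_2=[A_2,A_1]/(\lambda_2-\lambda_1)$ is of size $\varepsilon^{-1}$), the $A_j$'s undergo an $O(1)$ conjugation over the loop, and the relevant term $\Tr(A_1A_2)\,d\log(\lambda_1-\lambda_2)$ contributes $O(1)$ to $\oint\omega$. Since $\oint\omega$ is a homotopy invariant, it is constant along your family; you cannot conclude it is zero from a smallness heuristic that does not actually hold. Your final paragraph anticipates ``bookkeeping'' trouble here, but the issue is structural, not notational.

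The paper circumvents this by a different decomposition that never brings two \emph{distinct} punctures close together. It places $\lambda_2,\dots,\lambda_n$ \emph{exactly} on $\R$ (where Lemma~\ref{Lem:fuchsbound} gives a regular limiting Schlesinger system, since the only collisions are $\lambda_k=\lambda_{\bar k}$ with $A_k+A_{\bar k}=0$), and moves only $\lambda_1$ off the boundary along a half-twist over $\lambda_2$. In that degenerate regime the Schlesinger equations are solved \emph{explicitly}: $A_1,A_{\bar 1}$ are constant and
\[
A_j(\lambda)=\Big(\tfrac{\lambda_j-\lambda_1}{\lambda_j-\overline{\lambda_1}}\Big)^{-A_1}N_j\Big(\tfrac{\lambda_j-\lambda_1}{\lambda_j-\overline{\lambda_1}}\Big)^{A_1},
\]
from which $\Tr(A_1A_j)=\Tr(A_1N_j)$ is constant (and $\Tr(A_1A_{\bar 1})=0$ by nilpotency), so $\tau$ is constant along the half-twist. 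One then completes the braid by sliding punctures along $\R$, where $\tau\equiv 1$ by Lemma~\ref{Lem:taubound}. This explicit computation is the missing ingredient in your approach; the abstract homotopy to the boundary cannot replace it.
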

\begin{proof}
First from \eqref{eq:dlogtau} we observe that the logarithmic variation of $\tau$ depends on the $A$'s, which can be recovered from the fundamental solution $Y_0$, which is itself uniquely characterized by $\rho$ and the growth condition. Conjugating $\rho$ amounts to conjugating all the $A_j$'s by the same constant matrix and this does not affect $\tau$. So the logarithmic variation of $\tau$ depends only on the set of punctures and $\rho$, and we need only check that the character of the pure braid group it generates is trivial. 

It is enough to consider the case where one puncture (say $\lambda_1$) circles around another one (say $\lambda_2$) without encircling any other puncture, as these moves generate the fundamental group of $\Lambda$. We may also assume that all the punctures are close to $\R$ (by choosing a ``root" on the configuration space $\Lambda$). 

When only one singularity ($\lambda_1$) is away from the boundary and all the other ones are on the boundary and fixed, one gets a solution of the (limiting) Schlesinger equations \eqref{eq:Schlesinger} given by $A_1,A_{\bar 1}$ constant, $A_j=-A_{\bar j}$ for all $j$ and 
$$A_j(\lambda)=\left(\frac{\lambda_j-\lambda_1}{\lambda_j-\overline\lambda_1}\right)^{-A_1}N_j\left(\frac{\lambda_j-\lambda_1}{\lambda_j-\overline\lambda_1}\right)^{A_1}$$
for $j=2,\dots,n$. Since $A_1$ commutes with $z^{A_1}$, $\Tr(A_1A_j)=\Tr(A_1N_j)=-\Tr(A_1N_{\bar j})$ is constant (and $\Tr(A_1A_{\bar 1})=\Tr(-A_1^2)=0$ by nilpotency) and it follows that $\tau$ stays constant (with all but one punctures fixed and on the boundary; see also Corollary \ref{Cor:tau1} below).

Starting from $\lambda_1<\lambda_2$, we may proceed in the following way: move $\lambda_1$ along a clockwise semicircle to $\lambda'_1>\lambda_2$ (half-twist); then move $\lambda_2$ to $\lambda'_2>\lambda_1$; and translate $\lambda_1',\lambda_2'$ back to $\lambda_1,\lambda_2$. By Lemma \ref{Lem:taubound} and the previous argument, $\tau$ is unchanged by this sequence of operations, which is what we needed to check.
\end{proof}

The upper half-plane admits conformal automorphisms (homographies) than operate diagonally on $\Lambda$. If $\phi$ is such an homography, there is a natural identification $\pi_1(\H\setminus\{\lambda_1,\dots,\lambda_n\})\simeq \pi_1(\H\setminus\{\phi(\lambda_1),\dots,\phi(\lambda_n)\})$.

\begin{Lem}\label{Lem:taumoeb}
The $\tau$-function is invariant under homographies.
\end{Lem}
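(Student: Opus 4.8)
The plan is to reduce the statement to the definition of the $\tau$-function via the Jimbo–Miwa–Ueno $1$-form $\omega$ in \eqref{eq:dlogtau}, and to show that $\omega$ transforms correctly under a Möbius change of variables. Fix a homography $\phi\in\mathrm{PSL}_2(\R)$ preserving $\H$, and let $\tilde\lambda_i=\phi(\lambda_i)$. The Schlesinger system \eqref{eq:Schlesinger} is the integrability condition for the flat connection \eqref{eq:Fuchstot}; since $\phi$ is an automorphism of $\hat\C$ (once we extend it to act on the doubled, reflected configuration, which is legitimate because $\phi$ commutes with complex conjugation when it has real coefficients), pulling back the Fuchsian system \eqref{eq:Fuchs} along $\phi$ produces another Fuchsian system with the same singular structure. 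The key point is that the residue matrices transform: if $\tilde A_i$ denotes the residue at $\tilde\lambda_i$ of the pulled-back system, then $\tilde A_i = A_i$ up to conjugation by a constant matrix (the value at $\infty$ of the cocycle relating the two normalizations $Y_0(\infty)=\Id$ before and after $\phi$), hence $\Tr(\tilde A_i\tilde A_j)=\Tr(A_iA_j)$ for all $i,j$.

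First I would make precise how $\phi$ acts on \eqref{eq:Fuchs}. Writing $z=\phi(w)$, the chain rule gives $\partial_w Y = \phi'(w)\, A(\phi(w))\, Y$, and using the partial-fractions identity
$$\phi'(w)\sum_i\frac{A_i}{\phi(w)-\lambda_i}=\sum_i\frac{A_i'}{w-\tilde\lambda_i}+(\text{entire part})$$
one checks the entire part vanishes (because $\sum_i A_i=0$ guarantees regularity at the preimage of $\infty$, and conversely the behaviour at the new $\infty=\phi^{-1}(\infty)$ is controlled the same way), with $A_i'= (\phi^{-1})'(\lambda_i)^{\,}\cdot(\text{something})$ — in fact the residue at $\tilde\lambda_i$ of a simple pole is unchanged under a local biholomorphic change of coordinate only up to the derivative factor, but since $A_i$ is here a \emph{matrix} residue of $\partial_w Y\,Y^{-1}$, which is a connection $1$-form and hence transforms as a tensor, the residue is genuinely invariant: $A_i'=A_i$. (This is the cleanest way to see it: $A(z)\,dz$ is a meromorphic $\mathfrak{gl}_2$-valued $1$-form, its residues are intrinsic.) One must still reconcile normalizations: $Y_0$ is pinned by $Y_0(\infty)=\Id$, and $\phi$ moves $\infty$, so the pulled-back fundamental solution is $Y_0\circ\phi$ composed with a constant right-multiplication $C=(Y_0\circ\phi)(\infty)^{-1}$; this conjugates all $A_i$ by $C$ and, as already noted in the discussion before Lemma \ref{Lem:tauchoice}, conjugation by a constant does not affect $\omega$.

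With residues invariant (up to simultaneous conjugation) and the differentials transforming by $d(\tilde\lambda_i-\tilde\lambda_j)/(\tilde\lambda_i-\tilde\lambda_j)=d(\lambda_i-\lambda_j)/(\lambda_i-\lambda_j)+d\log\big((\phi'(\lambda_i))(\phi'(\lambda_j))\big)/2$ — wait, more carefully: $\tilde\lambda_i-\tilde\lambda_j=\phi(\lambda_i)-\phi(\lambda_j)=(\lambda_i-\lambda_j)\,\phi'(\xi_{ij})$ has a cross-ratio-type expression, and the point is that $d\log(\tilde\lambda_i-\tilde\lambda_j)-d\log(\lambda_i-\lambda_j)$ is a sum of exact terms $d\log\psi(\lambda_i)+d\log\psi(\lambda_j)$ for an appropriate function $\psi$ depending only on $\phi$ — the extra contribution to $\omega$ is $\tfrac12\sum_{i\ne j}\Tr(A_iA_j)(d\log\psi(\lambda_i)+d\log\psi(\lambda_j))=\sum_i\big(\sum_{j\ne i}\Tr(A_iA_j)\big)d\log\psi(\lambda_i)=-\sum_i\Tr(A_i^2)\,d\log\psi(\lambda_i)=0$, the last equality because $\sum_j A_j=0$ forces $\sum_{j\ne i}\Tr(A_iA_j)=-\Tr(A_i^2)$, which vanishes since the $A_i$ are nilpotent (by the boundary condition fixed after Lemma \ref{Lem:fuchsbound}). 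Hence $\tilde\omega=\omega$ under the identification of configuration spaces, so $d\log\tilde\tau=d\log\tau$; finally the multiplicative constant is pinned down the same way on both sides because $\phi$ maps $\Lambda_\R$ to $\Lambda_\R$ (real boundary configurations to real boundary configurations, in the same cyclic order since $\phi$ preserves orientation of $\R$) and $\tau\equiv1$ there by Lemma \ref{Lem:taubound}, so $\tilde\tau=\tau$.

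The main obstacle is bookkeeping around the doubled/reflected picture and the point at infinity: one needs $\phi$ (with real coefficients) to act compatibly on the reflected configuration $\Lambda$ — it does, since it commutes with $z\mapsto\bar z$ — and one needs to handle the case where $\phi$ sends some $\lambda_i$ or the auxiliary point $\infty$ across the cut structure, re-choosing the generators $\gamma_i$ accordingly; Lemma \ref{Lem:tauchoice} already guarantees $\tau$ is insensitive to such re-choices, so this is harmless but must be invoked. The genuinely delicate verification is that the entire (polynomial) part of $\phi'(w)A(\phi(w))$ vanishes, i.e. that the transformed system is again Fuchsian with $\infty$ a regular point with zero residue; this is exactly the condition $\sum_i A_i=0$ together with its image under $\phi$, and is the one computational step I would actually carry out in detail.
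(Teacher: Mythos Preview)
Your argument is correct and arrives at the same conclusion via the same algebraic identities, but you organize it as a global computation whereas the paper works infinitesimally. The paper simply checks that $\omega$ is annihilated by the three vector fields $\sum_j \lambda_j^{m+1}\partial_{\lambda_j}$ for $m=-1,0,1$ generating the M\"obius action: translation is immediate from the form of \eqref{eq:dlogtau}, scaling reduces to $\sum_{i\neq j}\Tr(A_iA_j)=-\sum_i\Tr(A_i^2)=0$, and the special conformal generator reduces to $\Tr\big((\sum_iA_i)(\sum_j\lambda_jA_j)\big)=0$. These are exactly the identities you use, packaged into your formula $d\log(\tilde\lambda_i-\tilde\lambda_j)-d\log(\lambda_i-\lambda_j)=d\log\psi(\lambda_i)+d\log\psi(\lambda_j)$ with $\psi(\lambda)=(c\lambda+d)^{-1}$, whose contribution to $\omega$ vanishes by nilpotency.

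The infinitesimal route buys economy: it sidesteps the bookkeeping you flag around the point at infinity, the renormalization constant $C=(Y_0\circ\phi)(\infty)^{-1}$, the verification that the pulled-back system is again Fuchsian with vanishing residue at the new $\infty$, and the need to invoke Lemma~\ref{Lem:tauchoice} when $\phi$ permutes the order of the punctures or moves $\infty$ through the configuration. Your global route has the compensating virtue that the matching of normalizations on $\Lambda_\R$ is explicit rather than implicit in the connectedness of the M\"obius group. Either way the substance is the same pair of facts: $\sum_i A_i=0$ and $\Tr(A_i^2)=0$.
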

\begin{proof}
The group of homographies $\H\rightarrow\H$ is a 3-parameter Lie group and it is enough to check the statement infinitesimally, viz. we need to check
$$\sum_j\lambda_j^{m+1}\partial_{\lambda_j}\log\tau=0$$
for $m=-1,0,-1$. For $m=-1$ (translation) it is immediate from \eqref{eq:dlogtau}. For $m=0$ (scaling) we obtain
$$\sum_j\lambda_j^{m+1}\partial_{\lambda_j}\log\tau=\frac 12\sum_{i\neq j}\Tr(A_iA_j)=-\frac 12\sum_i\Tr(A_i^2)=0$$
by nilpotency, taking into account $\sum_jA_j=0$. (More generally $\tau$ is Moebius invariant when seen as a form with weights given in terms of the local monodromy exponents). Finally for $m=1$ we get
$$\sum_j\lambda_j^{2}\partial_{\lambda_j}\log\tau
=\frac 12\sum_{i\neq j}\Tr(A_iA_j)(\lambda_i+\lambda_j)=\sum_i\Tr(A_i\sum_j\lambda_jA_j)=\Tr((\sum_iA_i)(\sum_j \lambda_jA_j))=0
$$
using again $\Tr(A_i^2)=0$ and $\sum_iA_i=0$.
\end{proof}
Consequently, if $D$ is any simply-connected domain with punctures $\lambda_1,\dots,\lambda_n$ and $\rho:\pi_1(D\setminus\{\lambda_1,\dots,\lambda_n\})\rightarrow\SL_2(\R)$ is a representation with unipotent local monodromies, one can define unambiguously
$$\tau=\tau(D\setminus\{\lambda_1,\dots,\lambda_n\},\rho)$$
in such a way that if $\phi:D\rightarrow D'$ is a conformal equivalence,
$$\tau(\phi(D\setminus\{\lambda_1,\dots,\lambda_n\}),\rho)=\tau(D\setminus\{\lambda_1,\dots,\lambda_n\},\rho)$$
with the natural identification of the fundamental groups.

\begin{Cor}\label{Cor:tau1}
If $n=1$, $\tau(D\setminus\{\lambda\},\rho)=1$.
\end{Cor}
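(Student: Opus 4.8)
The plan is to reduce to the standard reflected picture of Section~\ref{Sec:tauHP} and then observe that for a single puncture the $1$-form $\omega$ defining $\log\tau$ simply vanishes. First I would use conformal invariance (Lemma~\ref{Lem:taumoeb} and the discussion following it) to replace the arbitrary simply-connected $D$ by the upper half-plane $\H$ with a single puncture $\lambda=\lambda_1\in\H$ and $\rho(\gamma_1)=\Id_2+N_1$ for some real nilpotent $N_1\in M_2(\R)$. By the construction of Section~\ref{Sec:tauHP}, the $\tau$-function is then the one attached to the doubled configuration on $\hat\C$ with punctures $\lambda_1,\lambda_{\bar 1}=\overline{\lambda_1}$, monodromies $\tilde\rho(\gamma_1)=\tilde\rho(\gamma_{\bar 1})=\Id_2+N_1$, and the solution of the Schlesinger equations~\eqref{eq:Schlesinger} singled out by the boundary condition of Lemma~\ref{Lem:fuchsbound}: $A_1=-\tfrac1{2i\pi}N_1=-A_{\bar 1}$, both nilpotent.

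Next I would note that with only two punctures the relation $\sum_i A_i=0$ forces $A_{\bar 1}=-A_1$ for all parameter values, so $[A_1,A_{\bar 1}]=0$ and the Schlesinger equations degenerate: $A_1$ stays equal to the constant matrix $-\tfrac1{2i\pi}N_1$ on all of $\Lambda$. Substituting into~\eqref{eq:dlogtau}, the only surviving pair of indices is $\{1,\bar 1\}$, so $\omega=\Tr(A_1A_{\bar 1})\,d(\lambda_1-\lambda_{\bar 1})/(\lambda_1-\lambda_{\bar 1})=-\Tr(A_1^2)\,d(\lambda_1-\lambda_{\bar 1})/(\lambda_1-\lambda_{\bar 1})$, which vanishes because a $2\times 2$ nilpotent matrix squares to zero. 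Hence $d\log\tau=0$.

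Finally, since $\Lambda$ is connected and $\omega\equiv 0$, $\tau$ is a global constant on $\Lambda$; by Lemma~\ref{Lem:taubound} it extends continuously to $\Lambda_\R$, where the multiplicative constant has been fixed so that $\tau\equiv 1$, whence $\tau\equiv 1$ on the whole configuration space, independently of the location of $\lambda$ and of the unipotent $\rho(\gamma_1)$. There is essentially no genuine obstacle here: the computation is immediate once the reflected set-up and the normalization of Lemma~\ref{Lem:taubound} are in place. The only point requiring a little care is to invoke conformal invariance together with that normalization in the right order, so that the answer is genuinely $1$ and not merely some unspecified constant.
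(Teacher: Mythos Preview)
Your argument is correct. It differs from the paper's one-line proof, which simply invokes M\"obius invariance (Lemma~\ref{Lem:taumoeb}) to map $(D,\lambda)$ to $(\H,i\eps)$ for arbitrarily small $\eps>0$ and then appeals to Lemma~\ref{Lem:taubound}. You instead compute directly that with only the pair $\lambda_1,\lambda_{\bar 1}$ one has $A_{\bar 1}=-A_1$ identically, so the Schlesinger equations trivialize and $\omega=-\Tr(A_1^2)\,d(\lambda_1-\lambda_{\bar 1})/(\lambda_1-\lambda_{\bar 1})=0$ by nilpotency; hence $\tau$ is constant and equal to its boundary value $1$. Your route is self-contained and does not rely on Lemma~\ref{Lem:taumoeb}; it is in fact the same computation the paper carries out inside the proof of Lemma~\ref{Lem:tauchoice} (where it observes $\Tr(A_1A_{\bar 1})=-\Tr(A_1^2)=0$ and refers forward to this corollary). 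The paper's version is terser but leans on the M\"obius-invariance lemma, while yours makes explicit why nothing can happen when $n=1$.
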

\begin{proof}
We may conformally map $D\setminus\{\lambda\}$ to $\H\setminus\{\eps i\}$ for $\eps>0$ arbitrarily small and use Lemma \ref{Lem:taubound}.
\end{proof}

Our next task is to study the limiting behavior of $\tau$ under pinching of the domain. Recall that a sequence of simply connected domains $(D_m)_{m\geq 0}$, containing a fixed point $\lambda$, Carath\'eodory-converges (seen from $\lambda$) to a simply connected domain $D$ if the uniformizing maps $\phi_m:\U\rightarrow D_m$ s.t. $\phi_m(0)=\lambda$, $\phi_m'(0)>0$ converge uniformly on compact subsets of the unit disk $\U$ to the uniformizing map $\phi:\U\rightarrow D$. 

We make a somewhat ad hoc modification of this classical notion in the presence of multiple marked bulk points (punctures). If $\mu_1,\dots,\mu_k,\mu_{k+1},\mu_{k+\ell}$ are other marked points, we say that $(D_m,\mu_1,\dots,\mu_{k+\ell})$ C-converges (seen from $\lambda$) to $(D,\mu_1,\dots,\mu_k)$ if $D_m$ C-converges to $D$, $\phi\circ\phi_m^{-1}(\mu_j)$ converges to $\mu_j\in D$ for $j\leq k$ and $\phi_m^{-1}(\mu_j)$ eventually exits any compact subset of $\U$ for $j=k+1,\dots,k+\ell$.

\begin{Lem}\label{Lem:pinch}
Let $D_m$ be a decreasing subsequence of simply-connected domains containing the $n$ distinct punctures $\lambda_1,\dots,\lambda_n$ s.t. $\cap_m D_m=D^-\sqcup D^+$, $D^\pm$ simply-connected, $\lambda_1,\dots,\lambda_k\in D^-$, $\lambda_{k+1},\dots,\lambda_n\in D^+$. A representation $\rho:\pi_1(D\setminus\{\lambda_1,\dots,\lambda_n\})\rightarrow\SL_2(\R)$ is given; it restricts to representations $\rho_\pm$ of $\pi_1(D^\pm\setminus\{\lambda_1,\dots,\lambda_n\})$.
Then
$$\lim_{m\rightarrow\infty}\tau(D_m\setminus\{\lambda_1,\dots,\lambda_n\},\rho)=\tau(D^-\setminus\{\lambda_1,\dots,\lambda_k\},\rho_-)\tau(D^+\setminus\{\lambda_{k+1},\dots,\lambda_n\},\rho_+)$$
\end{Lem}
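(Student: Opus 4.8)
The plan is to reduce the pinching statement to the behavior of the $\tau$-function under a one-parameter degeneration in moduli space, using the Moebius invariance from Lemma \ref{Lem:taumoeb} together with the logarithmic-derivative formula \eqref{eq:dlogtau}. First I would normalize: using a homography I place the ``neck'' of the pinch near a fixed point, say send $D_m$ to a standard family in which $D^-$ and $D^+$ are fixed simply-connected domains separated by a thin tube whose modulus diverges; concretely one can arrange that after uniformization the configuration looks like the upper half-plane with the punctures $\lambda_1,\dots,\lambda_k$ in a bounded region near $0$ and $\lambda_{k+1},\dots,\lambda_n$ scaled out towards $\infty$ by a factor $t\to\infty$, so that the C-convergence hypothesis translates exactly into this separation of scales. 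The representation $\rho$ restricts to $\rho_\pm$ on the two pieces; because the local monodromies are unipotent and the loop around the neck is trivial in $\pi_1(D\setminus\{\lambda_1,\dots,\lambda_n\})$, the product $\rho(\gamma_1)\cdots\rho(\gamma_k)$ and $\rho(\gamma_{k+1})\cdots\rho(\gamma_n)$ are inverse to each other, and in the limit the two blocks decouple.

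The core computation is then to track $\partial_t\log\tau$ along the degeneration. Split the punctures into the ``left'' cluster $L=\{1,\dots,k\}$ and ``right'' cluster $R=\{k+1,\dots,n\}$, and in \eqref{eq:dlogtau} separate the sum into intra-$L$, intra-$R$, and cross terms. The intra-cluster sums are precisely the $d\log\tau$ of the two limiting problems (after the rescaling, the left cluster sees the right cluster recede to a single effective puncture at $\infty$ with trivial/unipotent total monodromy, and symmetrically), so they converge to $d\log\tau(D^-\setminus\{\lambda_1,\dots,\lambda_k\},\rho_-)$ and $d\log\tau(D^+\setminus\{\lambda_{k+1},\dots,\lambda_n\},\rho_+)$; here Corollary \ref{Cor:tau1} (the $\tau$-function of a once-punctured domain is $1$) is what guarantees there is no leftover contribution from ``collapsing'' a cluster to a point. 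The cross terms $\Tr(A_iA_j)\,d(\lambda_i-\lambda_j)/(\lambda_i-\lambda_j)$ with $i\in L$, $j\in R$ must be shown to give a vanishing contribution: as $t\to\infty$ one has $d(\lambda_i-\lambda_j)/(\lambda_i-\lambda_j)=d\log(\lambda_i-\lambda_j)$, which to leading order is $d\log\lambda_j$ (a pure ``right-cluster'' scaling form), and $\sum_{i\in L}A_i$ tends to $-\sum_{j\in R}A_j$; using the traceless/nilpotent normalization of the $A_i$'s enforced by Lemma \ref{Lem:fuchsbound} and $\sum_i A_i=0$, the leading cross term assembles into $\Tr\bigl((\sum_{i\in L}A_i)(\sum_{j\in R}\lambda_j A_j)\bigr)\,d\log(\cdot)$-type expressions that cancel against the boundary terms of the intra-$R$ sum, exactly as in the $m=1$ Moebius computation in the proof of Lemma \ref{Lem:taumoeb}. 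What remains is an $o(1)$ error that is integrable in $t$ up to $t=\infty$, so integrating $\partial_t\log\tau$ and exponentiating yields the claimed factorization.

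To make the convergence of the $A_i$'s rigorous I would invoke the Riemann--Hilbert characterization recalled in Section \ref{Sec:tau}: $Y_0$ is the unique $\SL_2(\C)$-valued solution with the prescribed unipotent jumps and $O(|\log(z-\lambda_i)|)$ growth, and under the degeneration this RH problem converges (uniformly on compacts away from the punctures and the neck) to the RH problem that is the ``product'' of the two limiting RH problems on $D^\pm$ — the neck region contributes a transition matrix converging to the identity because the monodromy around it is trivial. Standard RH stability then gives convergence of $Y_0$, hence of $R(w)$, hence of the $R_i$ and of $\partial_{\lambda_i}\log\tau=\Tr(A_iR_i)$, on each cluster. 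The main obstacle, and the step deserving the most care, is precisely the cross-term cancellation: one needs the right rate of decay (faster than $1/t$ after accounting for the logarithmic form) and must handle the interface region where neither the ``left'' nor the ``right'' approximation is uniformly valid; this is where the nilpotency of the $A_i$'s (so that $\Tr(A_i^2)=0$ and the leading quadratic forms degenerate) and the constraint $\sum_i A_i=0$ do the essential work, in direct analogy with how they force the $m=1$ identity in Lemma \ref{Lem:taumoeb}.
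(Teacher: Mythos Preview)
Your normalization---uniformize so the two clusters sit at distance $M_m\to\infty$ in $\H$---is exactly the paper's. Where you diverge is in controlling the cross terms. You integrate $\partial_t\log\tau$ along the degeneration and try to cancel the leading contribution $\frac1t\Tr\bigl((\sum_{i\in L}A_i)(\sum_{j\in R}A_j)\bigr)$ by a Moebius-type identity. That is not the right mechanism: the $m=1$ computation in Lemma~\ref{Lem:taumoeb} relies on the \emph{global} constraint $\sum_i A_i=0$, which says nothing about $\sum_{i\in L}A_i$, and absent further input the cross term is only $O(1/t)$, not integrable. What actually forces $\sum_{i\in L}A_i\to 0$ is the Schlesinger decoupling you mention in your third paragraph: the cross-cluster commutators $[A_i,A_j]/(\lambda_i-\lambda_j)$ in \eqref{eq:Schlesinger} are $O(M_m^{-1})$, so the full-problem $A_j$'s converge to the split-problem ones, and those satisfy $\sum_{i\in L}A_i^{(-)}=0$. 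Your argument would work if you fed that in, but then the Moebius cancellation is irrelevant.

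The paper avoids this detour by integrating along a short path rather than along the degeneration. From $\Lambda_\R$ (where $\tau\equiv 1$ by Lemma~\ref{Lem:taubound}) to the actual configuration, each puncture moves only by its bounded imaginary part, so the path has length $O(1)$. Along it the cross terms in \eqref{eq:dlogtau} have $|\lambda_i-\lambda_j|\asymp M_m$ but $|d(\lambda_i-\lambda_j)|=O(1)$, hence contribute $O(M_m^{-1})$ outright---no cancellation needed. The same Schlesinger estimate (Gronwall over a length-$O(1)$ path, with matching boundary data on $\Lambda_\R$) gives $A_j(\text{full})=A_j(\text{split})+O(M_m^{-1})$, and then $\log\tau=\log\tau_-+\log\tau_++O(M_m^{-1})$ directly. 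One small correction: in $\H$ the fundamental group is free, so $\rho(\gamma_1)\cdots\rho(\gamma_k)$ is not inverse to $\rho(\gamma_{k+1})\cdots\rho(\gamma_n)$ as you claim; the triviality that matters is in the reflected sphere, where the loop around the full right cluster $\{\lambda_{k+1},\dots,\lambda_n,\overline{\lambda_n},\dots,\overline{\lambda_{k+1}}\}$ has monodromy $\rho(\gamma_{k+1})\cdots\rho(\gamma_n)\rho(\gamma_n)^{-1}\cdots\rho(\gamma_{k+1})^{-1}=\Id$.
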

Remark that $\rho_{\pm}$ are a priori defined up to conjugation, but that does not affect the value of $\tau$.
\begin{proof}
By the Carath\'erodory kernel theorem, the assumptions imply that
\begin{itemize}
\item seen from $\lambda_1$, $(D_m,\lambda_1,\dots,\lambda_n)$ C-converges to $(D^-,\lambda_1,\dots,\lambda_k)$, and that
\item seen from $\lambda_n$, $(D_m,\lambda_1,\dots,\lambda_n)$ C-converges to $(D^+,\lambda_{k+1},\dots,\lambda_n)$.
\end{itemize}
Let $\psi_m:D_m\rightarrow \H$ be the conformal equivalence s.t. $\psi_m(\lambda_1)=i$, $\psi_m(\lambda_n)=i+M_m$, $M_m\in (0,\infty)$. Then $M_m\rightarrow\infty$. Set $\lambda_j^m=\psi_m(\lambda_j)$. Then 
\begin{itemize}
\item $\lambda_1^m,\dots,\lambda_k^m$ converge to $k$ distinct points $\lambda^\infty_1,\dots,\lambda^\infty_k$ in $\H$.
\item $\lambda_{k+1}^m-M_m,\dots,\lambda_n^m-M_m$ converge to $n-k$ distinct points $\lambda^\infty_{k+1},\dots,\lambda^\infty_n$ in $\H$.
\end{itemize}
Then we want to show that
$$\lim_{m\rightarrow\infty}\tau(\H\setminus\{\lambda_1^m,\dots,\lambda_n^m\},\rho)=\tau(\H\setminus\{\lambda^\infty_1,\dots,\lambda_k^\infty\},\rho_-)
\tau(\H\setminus\{\lambda^\infty_{k+1},\dots,\lambda_n^\infty\},\rho_+)$$
By direct examination of \eqref{eq:Schlesinger} we obtain
\begin{align*}
A_j(\lambda_1^m,\dots,\lambda_n^m,\rho)&=A_j(\lambda_1^m,\dots,\lambda_k^m,\rho_-)+O(M_m^{-1})&{\rm\ for\ }1\leq j\leq k\\
A_j(\lambda_1^m,\dots,\lambda_n^m,\rho)&=A_j(\lambda_{k+1}^m,\dots,\lambda_{n}^m,\rho_+)+O(M_m^{-1})&{\rm\ for\  }k<j\leq n
\end{align*}
and then from \eqref{eq:dlogtau}
$$\log\tau(\lambda_1^m,\dots,\lambda_n^m,\rho)=\log\tau(\lambda_1^m,\dots,\lambda_k^m,\rho_-)+\log\tau(\lambda_{k+1}^m,\dots,\lambda_{n}^m,\rho_+)+O(M_m^{-1})$$
which concludes.
\end{proof}

\section{Convergence}

In this section we study the small mesh limit of Kenyon's double dimer observables (for real, locally unipotent representations). For perspective and comparison, we start with a brief discussion of the electric correlators analyzed in \cite{Dub_tors}, corresponding to unitary line bundles (above the punctured domain). Then we start discussing rank 2 bundles. (Remark however that the asymptotic analysis could be carried out {\em mutatis mutandis} for a general rank $r\geq 2$; but the combinatorial interpretation motivating this analysis seems specific to $r=2$). We start with a local analysis near a puncture (modeled on the case of the punctured sphere $\hat\C\setminus\{0,\infty\}$). Patching the local construction with the global information derived from the corresponding Fuchsian system, we estimate the inverting kernel of the (bundle) Kasteleyn operator. This then allows to evaluate the variation of double dimer correlators under a macroscopic isomonodromic deformation (away from the boundary). We conclude with {\em a priori} estimates for punctures near the boundary.

\subsection{The rank 1 case}

It may be useful for comparison and intuition to review some of the results of \cite{Dub_tors}, expressed in the $\tau$-function framework. We consider a dimer cover of a planar Temperleyan isoradial graph under the usual infinite volume measure. Punctures $\lambda_1,\dots,\lambda_n$ are located in faces of the graph and $\Sigma=\hat\C\setminus\{\lambda_1,\dots,\lambda_n\}$ is the punctured Riemann sphere. A representation $\rho:\pi_1(\Sigma)\rightarrow U_1(\C)$ is parameterized by $\chi_1,\dots,\chi_n\in\U$ with $\prod_j\chi_j=1$, so that $\rho(\ell_i)=\chi_i$ if $\ell_i$ is a ccwise oriented loop rooted at infinity separating $\lambda_i$ from the other punctures. Set $\chi_j=e^{2i\pi s_j}$ and assume furthermore that the $s_j$'s are small enough and $\sum_j s_j=0$. Let $V$ be the unitary line bundle over $\Sigma$ with global section $Y_0(z)=\prod_j (z-\lambda_j)^{s_j}$; this corresponds to $A_j=(s_j)$ (here the Schlesinger equations are of course trivial). Then
$$d\log\tau=\frac 12\sum_{i\neq j}s_is_j\frac{d(\lambda_j-\lambda_i)}{\lambda_j-\lambda_i}$$
so that $\tau\propto\prod_{i< j}(\lambda_j-\lambda_i)^{s_is_j}$ (a multivalued function on the configuration space). Set $S_\rho(z,w)=\frac{Y_0(w)^{-1}Y_0(z)}{z-w}$ and 
$$r_j=r^\rho_j=\lim_{z,w\rightarrow\lambda_j}\left(S_\rho(z,w)-\frac{((z-\lambda_j)/(w-\lambda_j))^{s_j}}{z-w}\right)$$
so that
$$d\log\tau=\sum_j s_j r_jd\lambda_j$$
Let us also consider the antiholomorphic line bundle $V^*$ with global section $Y_0^*(z)=\prod_j \overline{(z-\lambda_j)}^{-s_j}$ , whose sections are $\rho$-multivalued antiholomorphic functions. Plainly, $V^*=\overline{V_{\bar\rho}}$, and one may associate to this data an antiholomorphic $\tau$ function $\overline{\tau_{\bar\rho}}$.

In a discrete setting, consider a Temperleyan isoradial graph $\Xi$ with small mesh and punctures $\lambda_i$'s inside faces. Let $K_\rho:\C^{\Xi_B}\rightarrow\C^{\Xi_W}$ be the Kasteleyn operator twisted by $\rho$, and $K_\rho^{-1}$ the inverting kernel vanishing at infinity. Now displace $\lambda_1$ to a neighboring face of $\Xi$ (across the edge $(wb)$); let $\lambda'_1$ be the new position of the puncture and $K'_\rho$ be the corresponding operator. Then $K'_\rho K_\rho^{-1}$ is a rank 1 perturbation of the identity and
$$\det (K'_\rho K_\rho^{-1})=1+(\chi_1^{\pm 1}-1)K(w,b)K_\rho^{-1}(b,w)$$
($\pm$ depends on whether $w$ is to the left or to the right of $b$). A local analysis shows that $K_\rho^{-1}(b,w)$ may be expressed in terms of local data (viz. independent from $\lambda_j,s_j$, $j\neq1$), 
$r^\rho_1$ and $\overline {r^{\bar\rho}_1}$:
$$K(w,b)K_\rho^{-1}(b,w)=(\chi^{\pm 1}-1)^{-1}\left((u-1)+us_1\left(r_1^\rho\cdot (\lambda'_1-\lambda_1)+\overline{r^{\bar\rho}_1}\cdot\overline{(\lambda'_1-\lambda_1)}\right)\right)+o(\lambda'_1-\lambda_1)$$
where $u=\left(\frac{\lambda'_1-w}{\lambda_1-w}\right)^{-s_1}$ is an explicit unit number. This leads to
$$\det(K'_\rho K_\rho^{-1})= u\left(1+{s_1}\left(r_1^\rho\cdot (\lambda'_1-\lambda_1)+\overline{r^{\bar\rho}_1}\cdot\overline{(\lambda'_1-\lambda_1)}\right)\right)+o(\lambda'_1-\lambda_1)$$ 
where $\lambda_2,\dots,\lambda_n$ are regarded as fixed. In other words, $|\det(K_\rho K^{-1})|\simeq \tau_\rho\overline{\tau_{\bar\rho}}$ up to multiplicative constant.\\

\subsection{Rank 2}

Here we consider $\Xi=\Xi_\delta=\frac\delta 2(\Z\times\N)$ the upper half-plane square lattice with mesh $\delta$ (or a microscopic translation thereof, for reflection arguments); $\Xi=\Xi_B\sqcup\Xi_W$ is bipartite and we have a Kasteleyn operator $\rK:\R^{\Xi_B}\rightarrow\R^{\Xi_W}$, which we can duplicate to obtain an operator $\rK\oplus\rK:(\R^2)^{\Xi_B}\rightarrow(\R^2)^{\Xi_W}$. Given punctures $\lambda_1,\dots,\lambda_n$ in $\H$ and a representation $\rho:\pi_1(\H\setminus\{\lambda_1,\dots,\lambda_n\})\rightarrow\SL_2(\R)$ (and, for definiteness, a choice of branch cuts), we obtain a twisted operator $(\rK\oplus\rK)_\rho:(\R^2)^{\Xi_B}\rightarrow(\R^2)^{\Xi_W}$. For brevity we will simply denote it by $\rK_\rho$; for $\rho=\Id$ the trivial representation, we get the untwisted operator $\rK\otimes\rK$.

The goal here is to establish the following convergence result.

\begin{Lem}\label{Lem:conv}
Fix punctures $\lambda_1,\dots,\lambda_n$ in the upper half-plane and disjoint branch cuts. For $\rho:\pi_1(\H\setminus\{\lambda_1,\dots,\lambda_n\})$ a representation with unipotent local monodromies close enough to the identity,
$$\lim_{\delta\searrow 0}\det(\rK_\rho\rK_{\Id}^{-1})=\tau(\lambda_1,\dots,\lambda_n;\rho)$$
\end{Lem}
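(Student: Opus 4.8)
The plan is to mimic the rank~$1$ computation reviewed in Section~3.1: express $\log\det(\rK_\rho\rK_{\Id}^{-1})$ as an integral of its infinitesimal variation along a path in the configuration space of punctures, show that this variation converges, as $\delta\searrow0$, to the $1$-form \eqref{eq:dlogtau} defining $\log\tau$, and fix the constant of integration at a degenerate boundary configuration. Since $\rho$ is $\SL_2(\R)$-valued the operator $\rK_\rho$ is real, and Schwarz reflection across $\R$ identifies it with a twisted Kasteleyn operator on the whole-plane lattice $\frac\delta 2(\Z\times\Z)$ carrying the reflected punctures $\lambda_{\bar k}=\overline{\lambda_k}$ and the doubled representation $\tilde\rho$ of Section~\ref{Sec:tauHP}. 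This is the device that makes the relevant continuum object a genuine Fuchsian system on a punctured sphere, so that the classical theory recalled in Section~\ref{Sec:tau} applies, and that explains why the limit is a single $\tau$ and not $\tau\overline\tau$ as in the rank~$1$ case: the ``antiholomorphic half'' is carried by the reflected punctures $\lambda_{\bar k}$. I would phrase the whole argument on the doubled (whole-plane) lattice and recognise at the end, via the definitions of Section~\ref{Sec:tauHP} and Lemmas~\ref{Lem:tauchoice} and~\ref{Lem:taumoeb}, that the continuum answer is $\tau(\H\setminus\{\lambda_1,\dots,\lambda_n\},\rho)$.

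The core analytic step is to identify the scaling limit of the inverting kernel $\rK_\rho^{-1}(b,w)$. Away from the punctures, discrete holomorphicity of the square-lattice Kasteleyn operator (Kenyon) should give convergence, uniform on compacts, to the flat rank-$2$ bundle Cauchy kernel $S_{\tilde\rho}(z,w)=Y_0(w)^{-1}Y_0(z)/(z-w)$, where $Y_0$ is the fundamental solution of the doubled Fuchsian system normalized by $Y_0(\infty)=\Id_2$ (uniquely characterised by the Riemann--Hilbert problem of Section~\ref{Sec:tau}, the $A_j$ being nilpotent by Lemma~\ref{Lem:fuchsbound}). Near a puncture $\lambda_j$, I would run a local analysis modeled on $\hat\C\setminus\{0,\infty\}$: build discrete holomorphic sections carrying the prescribed (logarithmic, since $A_j$ is nilpotent) monodromy, and show that $\rK_\rho^{-1}$ splits into a universal local singular part plus a regular term governed by the regularised kernel $R_j=\sum_{i\neq j}A_i/(\lambda_i-\lambda_j)$ and its reflected counterpart, with error $o(1)$ as $\delta\searrow0$. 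The hypothesis that the local monodromies be unipotent and close to the identity is what keeps this a small perturbation of the untwisted case --- the continuum exponents then have modulus $<\frac12$, so sections grow like $O(|\log(z-\lambda_j)|)$ and there is no accumulation of loops or failure of integrability.

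Granting this, the isomonodromic deformation argument is combinatorial. Moving $\lambda_j$ across one edge $(wb)$ alters $\rK_\rho$ only in the $2\times2$ block sitting at that edge (after contracting the branch cut so that only one edge near $\lambda_j$ is crossed), so $\rK'_\rho\rK_\rho^{-1}$ is a rank-$\leq2$ perturbation of the identity and $\det(\rK'_\rho\rK_\rho^{-1})=\det(\Id_2+M)$ with $M$ an explicit $2\times2$ matrix built from $\rK(w,b)$, the local monodromy, and $\rK_\rho^{-1}(b,w)$. Substituting the expansion from the previous step, summing infinitesimal displacements along a macroscopic path, and letting $\delta\searrow0$, one obtains
$$d\log\det(\rK_\rho\rK_{\Id}^{-1})=\Tr(A_jR_j)\,d\lambda_j+\text{(reflected term)},$$
which, on the doubled system, is exactly the form \eqref{eq:dlogtau}, i.e. $d\log\tau$ (using $\partial_{\lambda_j}\log\tau=\Tr(A_jR_j)$). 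The constant of integration is fixed by moving the punctures to $\R$ one at a time: by reflection $\lambda_j$ then merges with $\overline{\lambda_j}$ into a removable (trivial-monodromy) puncture, so a near-boundary estimate (Section~5) gives that $\det(\rK_\rho\rK_{\Id}^{-1})$ converges to the same quantity with one fewer puncture, and inductively to $1$ when no puncture remains; on the $\tau$ side the matching normalisation $\tau\equiv1$ on $\Lambda_\R$ is Lemma~\ref{Lem:taubound} (with Corollary~\ref{Cor:tau1} as the $n=1$ instance).

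The hard part is the local-to-global analysis of the inverting kernel: proving that $\rK_\rho^{-1}$ converges to the continuum flat-bundle Cauchy kernel \emph{with the correct regularised value at each puncture}, uniformly as $\delta\searrow0$. This requires developing the rank-$2$ analogue of Kenyon's discrete holomorphicity and, above all, controlling the error of the discrete-to-continuum comparison near the punctures, where it is complicated by the logarithmic growth of sections of the flat bundle attached to nilpotent $A_j$; the same circle of estimates also yields the near-boundary input used to pin down the normalisation.
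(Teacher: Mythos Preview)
Your proposal is correct and follows essentially the same route as the paper: the paper also fixes the normalisation at $\Lambda_\R$ via the near-boundary estimate (Lemma~\ref{Lem:nearbounddiscr}) and Lemma~\ref{Lem:taubound}, and then integrates the microscopic variation $\det(\rK'_\rho\rK_\rho^{-1})$ along a macroscopic displacement, using a parametrix for $\rK_\rho^{-1}$ built by patching the continuum flat-bundle Cauchy kernel $S_\rho^\nu$ with the explicit single-puncture kernel of Lemma~\ref{Lem:1punct} (whose logarithmic growth reflects the nilpotency of $A_j$), exactly as you outline. One technical wrinkle you do not mention and which the paper handles explicitly: the variation formula carries an extra term $\Re\big(2(\lambda'_1-\lambda_1)e^{-2i\nu(w)}\Tr\big(\tfrac{Y_0(w)Y_0(\bar w)^{-1}}{w-\bar w}A_i\big)\big)$ beyond $2\Re((\lambda'_1-\lambda_1)\Tr(A_iB))$, and the paper disposes of it by routing the branch cut along the primal lattice so that $e^{-2i\nu(w)}=\pm1$ alternates and this contribution telescopes to $o(1)$.
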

\begin{proof}
When the punctures are on the boundary, both sides are equal to 1. Moving the punctures at a small distance of the boundary changes both sides by a small amount (from Lemmas \ref{Lem:taubound}, \ref{Lem:nearbounddiscr}), uniformly in $\delta$ for $\delta$ small enough. Once the punctures are at small but macroscopic distance of the boundary, displacing them changes both sides by the same amount up to $o(1)$ by Lemma \ref{Lem:discrvar}, which concludes.
\end{proof}

\subsubsection{Single puncture}

We shall need a priori estimates on basic discrete harmonic and holomorphic functions with unipotent monodromies. The problems and general line of reasoning are rather similar to the case of unitary monodromies studied in \cite{Dub_tors}; however we need to modify the constructions and some of the arguments.

On the square lattice $\Z^2$, we consider the (positive) discrete Laplacian $\Lap:\R^{\Z^2}\rightarrow\R^{\Z^2}$ given by
$$\Lap f(x)=\sum_{y\sim x}((f(x)-f(y))$$
where $\sim$ designates adjacency of vertices. There is a {\em potential kernel} (eg Theorem 4.4.4 in \cite{LawLim_RW}) s.t. $\Lap a=-4\delta_0$ and
\begin{equation}\label{eq:potasymp}
\begin{array}{rll}
a(x)&=\frac 2\pi\log|x|+c+O(|x|^{-2})\\
a(x')-a(x)&=\frac 2\pi(\log|x'|-\log|x|)+O(|x|^{-3})&{\rm\ if\ }x\sim x'
\end{array}
\end{equation}
where $c$ is a (known but unimportant) constant.

Let consider the harmonic conjugate $a^*$ of $a$ defined on the dual lattice $(\Z^2)^*\simeq (\frac 12,\frac 12)+\Z^2$. It is such that
$$a^*(y')-a^*(y)=a(x')-a(x)$$
if $(xx')$ is an edge of $\Z^2$ and $(yy')$ is its dual edge (oriented so that $((xx'),(yy'))$ is a direct frame). Technically, $da^*$ is well defined a discrete 1-form which is closed except at $0$. Since $\Lap a=-4\delta_0$, $a^*$ is harmonic on $(\Z^2)^*$ and additively multivalued (with puncture at $0$): it increases by $4$ per counterclockwise turn around $0$. From \ref{eq:potasymp} and the fact that
$$\arg(y')-\arg(y)=\log|x'|-\log|x|+O(|x|^{-3})$$
if $(yy')$ is dual to $(xx')$, one deduces easily
\begin{equation}\label{eq:argasymp}
a^*(x)=\frac 2\pi\arg(x)+O(|x|^{-2})
\end{equation}
Note that $a^*$ is defined up to an additive constant. 

Let us now consider discrete holomorphic functions on the bipartite graph $\Diamond$ whose black vertices $\Diamond_B$ correspond to vertices of $(\Z^2)$ or its dual $(\frac 12,\frac 12)+\Z^2$; and white vertices $\Diamond_W$ are midpoints of edges of $\Z^2$. The Kasteleyn operator $\rK:\R^{\Diamond_B}\rightarrow\R^{\Diamond_W}$ is defined by
\begin{align*}
(\rK f)(w)&=\frac 12\left(f(w+\frac 12)-f(w-\frac 12)+f(w+\frac i2)-f(w-\frac i2)\right)&{\rm if\ }\Im(w)\in\Z\\
(\rK f)(w)&=\frac 12\left(f(w+\frac i2)-f(w-\frac i2)+f(w-\frac 12)-f(w+\frac 12)\right)&{\rm if\ }\Re(w)\in\Z
\end{align*}
One checks \cite{Ken_domino_conformal} that $4\rK^t\rK$ restricts to the discrete Laplacian on $\Z^2$ and its dual. In particular, $\rK f=0$ in some region if $f_{|\Z^2}$ is harmonic there and $f_{|(\Z^2)^*}$ is its harmonic conjugate. It will be convenient to assign phases to vertices of $\Diamond$ in the following way
\begin{align*}
e^{i\nu(b)}&=1&{\rm if\ }b\in\Z^2\\
e^{i\nu(b)}&=i&{\rm if\ }b\in(\Z^2)^*\\
e^{i\nu(w)}&=1&{\rm if\ }w\in \Diamond_W, \Im(w)\in\Z\\
e^{i\nu(w)}&=i&{\rm if\ }w\in \Diamond_W, \Re(w)\in\Z
\end{align*}
We may write $(\rK f)(w)=\sum_{b\sim w}\rK(w,b)f(b)$, with the matrix element $\rK(w,b)$ given by
$$\rK(w,b)=(b-w)e^{-i\nu(b)-i\nu(w)}$$

As explained in \cite{Ken_domino_conformal}, one can construct an inverting kernel for $\rK$ from the potential kernel $a$. If $w$ corresponds to the edge $(xx')$ of $\Z^2$ (oriented eastward or northward, so that $x'-x=e^{i\nu(w)}$), one considers the function $\frac 12(a(.,x')-a(.,x))$ which is harmonic on $\Z^2$ except at $x',x$. Its harmonic conjugate (vanishing at infinity) is single-valued. Taken together they define a function $f$ on $\Diamond_B$ which is discrete holomorphic except at $w$; we denote it by $\uK^{-1}(.,w)$. Then
\begin{equation}\label{eq:Cauchyplane}
\begin{array}{cl}
\rK\uK^{-1}(.,w)&=\delta_w\\
\uK^{-1}(b,w)&=\Re\left(\frac{e^{i(\nu(b)+\nu(w))}}{\pi(b-w)}\right)+O(|b-w|^{-3})
\end{array}
\end{equation}

We want to construct a similar inverting kernel in the presence of a unipotent monodromy. More precisely, fix a $2\times 2$ unipotent matrix $P$ and a face $f$ of $\Diamond$. Let us consider the lift $\tilde\Diamond$ of the graph $\Diamond$ to the universal cover of $\C\setminus \{f\}$. One can describe $\tilde\Diamond$ in terms of decks indexed by $\Z=\pi_1(\C\setminus\{ f\})$ isomorphic to $\Diamond$ cut along a branch cut from $f$ to $\infty$; upon traversing a branch cut one moves up or down a deck. Remark that $\rK$ has a natural lift as an operator $\R^{\tilde\Diamond_B}\rightarrow\R^{\tilde\Diamond_W}$.

Let $\theta$ be the deck transformation of $\tilde\Diamond$ associated to a counterclockwise loop around $0$. We consider the space
$$(\R^2)^{\Diamond_\cdot}_P=\{f\in(\R^2)^{\Diamond_.}: f\circ\theta=fP\}$$
with $\cdot\in\{W,B\}$, which is identified with $P$-multivalued functions on $\Diamond$ punctured at $f$. (Here we consider $f$ taking values in $1\times 2$ row vectors). We can then define $\rK:(\R^2)^{\Diamond_B}_P\rightarrow(\R^2)^{\Diamond_W}_P$. 

Up to conjugation, we may choose $P=\left(\begin{array}{cc}1&1\\0&1\end{array}\right)$.  For definiteness let us a fix a branch cut (a simple path from $\infty$ to $f$ on $\Diamond^*$), which we can take to be a straight half-line ending at the puncture. For each $b\in\Diamond_B$ at the end of an edge crossing (from left to right) the branch cut, we separate $b$ into two vertices $b^{\pm}$ - the endpoint of the crossing edge - and $b^{\mp}$ - the endpoint of other edges initially adjacent to $b$, in such a way that $b^+$ (resp. $b^-$) is connected to the right (resp. left) hand of the cut, oriented toward the puncture. The set of such vertices $b^\pm$ is denoted by $\partial$ and the planar graph obtained in this fashion is $\bar\Diamond$. Then we have the identification 
$$(\R^2)^{\Diamond_B}_P\simeq \{f\in(\R^2)^{\bar\Diamond_B}: f(b^+)=f(b^-)P{\rm\ \ }\forall b^\pm\in\partial\}$$
(by distinguishing a zeroth deck). This may be seen as natural discretization of a Riemann-Hilbert problem.

In the standard case (ie in the absence of monodromies), discrete holomorphic functions vanishing at infinity vanish identically. More precisely, we have a maximum principle: the uniform norm in a domain of a discrete holomorphic function is less than its norm on the boundary of that domain. The following lemma is a substitute for that statement in the presence of unipotent monodromies.

\begin{Lem}\label{Lem:Pmax}
Let $f\in(\R^2)^{\Diamond_B}_P$ be s.t. $\rK f=0$ in $B(0,R)$. Then for $b\in\overline\Diamond_B\cap B(0,R)$,
$$f(b)=O((1+\log|R/b|)\|f_{|\gamma}\|_\infty)$$
where $\gamma$ consists of two simple paths on $\Gamma,\Gamma^*$ respectively within $O(1)$ of the circle $C(0,R)$.
\end{Lem}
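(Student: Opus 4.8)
The plan is to reduce the statement to an honest (scalar) maximum principle by decomposing a $P$-multivalued function into a "polynomial in the winding number" times single-valued pieces, and then to control each piece by the standard discrete maximum principle. Concretely, write $f = (f_1, f_2)$ with $f_1,f_2 : \tilde\Diamond_B\to\R$, where the relation $f\circ\theta = fP$ with $P = \bigl(\begin{smallmatrix}1&1\\0&1\end{smallmatrix}\bigr)$ forces $f_1\circ\theta = f_1$ and $f_2\circ\theta = f_1 + f_2$. Thus $f_1$ descends to a genuine single-valued discrete holomorphic function on $\Diamond_B$ (restricted to $B(0,R)$), while $f_2$ is multivalued with additive monodromy exactly $f_1$: increasing the winding by one full counterclockwise turn adds $f_1$. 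This is precisely the structure modelled by $a$ and $a^*$ from \eqref{eq:potasymp}, \eqref{eq:argasymp}: the function $a^*$ is the prototype of a function whose monodromy is the constant $1$ (up to the normalization by $4$), and $\frac 2\pi\arg$ is its continuous shadow.

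First I would handle $f_1$: it is single-valued and discrete holomorphic in $B(0,R)$, so by the standard maximum principle (the $4\rK^t\rK = \Lap$ identity of \cite{Ken_domino_conformal} plus the maximum principle for $\Lap$) we get $\|f_1\|_{\infty, B(0,R')} \le \|f_{1|\gamma}\|_\infty \le \|f_{|\gamma}\|_\infty$ for $\gamma$ near $C(0,R)$, with constants depending only on the geometry of the lattice near the circle. Next, to control $f_2$, I would subtract off a reference function carrying the right monodromy. Since $f_1$ is bounded by $M := \|f_{|\gamma}\|_\infty$ on, say, the annulus $\{|b|\asymp R\}$, and $a^*$ (centered at the puncture $f$, rescaled) increases by a fixed amount per turn, one forms $g := f_2 - \tfrac14 (\text{suitable average of } f_1)\, a^*_f$ — more carefully, one uses the discrete harmonic function on $\Z^2$ with boundary values $f_1$ on $C(0,R)$ and the associated multivalued harmonic conjugate, whose monodromy is $\oint f_1 \asymp M$ up to the lattice normalization — so that $g$ becomes single-valued and discrete holomorphic in $B(0,R)$. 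Then the standard maximum principle applies to $g$, giving $\|g\|_{\infty,B(0,R')} \le \|g_{|\gamma}\|_\infty + (\text{boundary correction})$, and since $a^*_f(b) = \tfrac 2\pi\arg(b - f) + O(|b|^{-2})$ is bounded on the annulus but the harmonic conjugate picture forces $f_2$ itself to grow like $M\log(R/|b|)$ as one approaches the puncture, we recover $f(b) = O\bigl((1 + \log|R/b|)\,M\bigr)$. The two branch-cut paths in the definition of $\gamma$ are there precisely so that the restriction $f_{|\gamma}$ makes sense as a scalar quantity despite the multivaluedness — one path on $\Gamma$, one on $\Gamma^*$, matching the two "halves" of $\Diamond$.

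The main obstacle I expect is the bookkeeping at the puncture: one must verify that the only source of unboundedness is the logarithmic term coming from the monodromy of the harmonic conjugate of $f_1$ (which is genuinely $O(\log)$, as \eqref{eq:argasymp} makes explicit for the model function $a^*$), and that $f_1$ itself cannot blow up near $f$. The latter holds because $f_1$, being single-valued and discrete holomorphic in a punctured disk with at worst the growth inherited from $f$, extends across the puncture by a discrete removable-singularity argument (there is no discrete holomorphic function on a punctured disk that is bounded away from the puncture yet blows up at it, once one knows it has a single-valued harmonic conjugate). A secondary technical point is making the comparison function $a^*$ — defined with its puncture at the center of a face of $\Z^2$ — fit the possibly-shifted face $f$ of $\Diamond$ and the chosen straight-line branch cut; this is routine but must be done with uniform constants, and it is the reason the statement allows the $O(1)$-neighborhood wiggle room for $\gamma$ and the implicit constants in the $O(\cdot)$.
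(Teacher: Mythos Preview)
Your reduction is right up to the point where you try to ``subtract off'' the monodromy. You correctly observe that $f_1$ is single-valued and bounded by $\|f_{|\gamma}\|_\infty$ via the ordinary maximum principle, and that $f_2\circ\theta = f_2 + f_1$. The gap is in the next step: the additive monodromy of $f_2$ is the \emph{function} $f_1$, not a constant. Subtracting any constant multiple of $a^*$ leaves a function with monodromy $f_1 - c$, still non-constant, so $g$ is not single-valued. Your ``more careful'' construction does not fix this: the harmonic conjugate of $f_1|_{\Z^2}$ is $f_1|_{(\Z^2)^*}$ itself (that is what discrete holomorphicity means here), and since $f_1$ is holomorphic across the puncture, its conjugate has zero monodromy, not $\oint f_1$. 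In the continuum the correct comparison is $\frac{\log z}{2\pi i}\,f_1(z)$, but products of discrete holomorphic functions are not discrete holomorphic, so there is no cheap discrete analogue. (The explicit sum-of-translated-$\Log$'s construction that appears after this lemma in the paper does produce a function with prescribed non-constant jump, but only for the specific decaying profile $\uK^{-1}(\cdot,w)$; for a generic bounded $f_1$ the sum would not converge.)

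The paper bypasses this entirely with a probabilistic argument. Run a simple random walk $X$ from $b$ on the universal cover; since $f_2$ is harmonic on $\tilde\Diamond$, optional stopping at the exit time $\tau$ of $B(0,R)$ gives
\[
f_2(b)=\E\bigl(f_2(X_\tau)+N_\tau f_1(X_\tau)\bigr),
\]
where $N_\tau$ is the deck index (winding count). The factor $|f_i(X_\tau)|\le\|f_{|\gamma}\|_\infty$ is controlled on the boundary, and the winding is handled by observing that $a^*(X_n)$ is a martingale within $O(1)$ of $4N_n$, with increments $O(|X_n|^{-1})$; summing the quadratic variation against the Green kernel of the ball yields $\E(\sup_{n\le\tau}N_n^2)=O\bigl((\log(R/|b|))^2\bigr)$, and Cauchy--Schwarz finishes. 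This is where your intuition about $a^*$ is actually used --- not as something to subtract, but as a martingale proxy for the winding number.
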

\begin{proof}
The first coordinate $f_1$ of $f$ is single-valued and restricts to discrete harmonic functions on $\Gamma,\Gamma^*$ and the result follows immediately from the maximum principle.

The second coordinate $f_2$ restricts to multivalued discrete harmonic functions on $\Gamma,\Gamma^*$, which increase by $f_1$ for each (counterclockwise) turn around $0$. So let us start a random walk $X$ on $\Gamma$ from $b$ on the zeroth-deck. We have
$$f_2(b)=\E(f_2(X_n)+N_nf_1(X_n))$$
where $N_n$ is the deck index at time $n$. We need to estimate moments of $N_n$ up to the time $\tau$ of first exit of $B(0,R)$. For this we use the discrete argument \eqref{eq:argasymp}. More precisely, $n\mapsto a^*(X_n)$ is a martingale which is within $O(1)$ of $4N$; its increments are $O(1/X_n)$. Consequently we have the $L^2$ estimate
$$\E((a^*(X_{n\wedge\tau})-a^*(X_0))^2)=\sum_{k=0}^{n-1}\E((a^*(X_{(k+1)\wedge\tau})-a^*(X_{k\wedge\tau}))^2)\leq c\E(\sum_{k=0}^{\tau-1}\frac{1}{1+|X_k|^2})\leq c\E\left(\sum_{x\in\Gamma} G(b,x)\frac{1}{1+|x|^2}\right)$$
where $G$ is the Green kernel for the random walk killed on the boundary $\gamma$. Basic random walk estimates give
$$G(b,x)=O\left(1+\log\frac {R-|x|}{|b-x|}\right)$$
if $|b-x|\leq R-|x|$ and $G(b,x)=O(\frac{R-|x|}{b-x})$ otherwise, which leads (together with a $L^2$ maximal inequality for martingales) to 
$$\E(\sup_{n\leq\tau}a^*(X_n)^2)=O((\log(R/|b|)^2)$$
One concludes by dominated convergence and Cauchy-Schwarz.
\end{proof}

We are interested in the behavior of $P$-multivalued discrete holomorphic functions near the singularity and for that purpose we shall need discrete logarithms. Let us consider the discrete holomorphic function $\Log:{\overline\Diamond}_B\rightarrow\R$ given by
$$\Log=\frac\pi 2\left(\ind_{\Z^2}(a-c)+\ind_{(\Z^2)^*}a^*\right)$$
so that 
\begin{align*}
\Log(b)&=\Re(e^{i\nu(b)}\log(b))+O(|b|^{-2})\\
\Log(b^+)-\Log(b^-)&=2\pi\ind_{\partial\cap(\Z^2)^*}&\forall b\in\partial
\end{align*}
By exchanging the roles of the two dual square lattices, one defines similarly a discrete holomorphic function $\Log^*{\overline\Diamond}_B\rightarrow\R$ s.t.
\begin{align*}
\Log^*(b)&=\Re(e^{i\nu(b)}i\log(b))+O(|b|^{-2})\\
\Log^*(b^+)-\Log^*(b^-)&=-2\pi\ind_{\partial\cap(\Z^2)}&\forall b\in\partial
\end{align*}

\begin{Cor}
If $f$ is discrete holomorphic and $P$-multivalued in $B(0,R)$ and if 
$$\left|f(b)-\Re\left(e^{i\nu(b)}%
\left(\alpha,\beta+\gamma\log(b)\right)
\right)\right|\leq\eps$$
on $\gamma$, then
$$\left|f(b)-\Re\left(e^{i\nu(b)}%
(\alpha,\beta)
\right)-\left(%
0,\Re(\gamma)\Log(b)+\Im(\gamma)\Log^*(b)
\right)\right|=O(\eps(1+\log(R/|b|)))$$
in $B(0,R)$.
\end{Cor}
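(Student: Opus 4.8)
The plan is to reduce the Corollary to an application of Lemma \ref{Lem:Pmax} to a suitably chosen difference of functions. The candidate ``explicit'' $P$-multivalued discrete holomorphic function built from the discrete logarithms is
$$g(b)=\Re\left(e^{i\nu(b)}(\alpha,\beta)\right)+\left(0,\Re(\gamma)\Log(b)+\Im(\gamma)\Log^*(b)\right),$$
so I would first check that $g$ is indeed discrete holomorphic (i.e. $\rK g=0$) and $P$-multivalued in $B(0,R)$. The first assertion is immediate since $b\mapsto\Re(e^{i\nu(b)}\cdot)$ of an affine function of $b$ is in the kernel of $\rK$ (it is the real/imaginary part of a discrete holomorphic function, as in \eqref{eq:Cauchyplane}), and $\Log$, $\Log^*$ were constructed to be discrete holomorphic. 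For the monodromy, traversing the cut counterclockwise leaves the first coordinate unchanged (it is single-valued) and increases the second coordinate by $2\pi(\Re(\gamma)\ind_{\partial\cap(\Z^2)^*}-\Im(\gamma)\ind_{\partial\cap(\Z^2)})$, which must be matched against the action of $P=\left(\begin{smallmatrix}1&1\\0&1\end{smallmatrix}\right)$ on $g$, namely addition of the first coordinate. Here one has to be a little careful: the relation $f\circ\theta=fP$ means that after one counterclockwise turn the second coordinate increases by (the old value of) the first coordinate, and the first coordinate $\Re(e^{i\nu(b)}\alpha)$ on $\Gamma$ versus its harmonic-conjugate avatar on $\Gamma^*$ is exactly what the $\pm2\pi$ jumps of $\Log$, $\Log^*$ reproduce. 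So the normalization of $\alpha$ in the statement is forced, and I would verify this bookkeeping matches the $O(|b|^{-2})$ expansions of $\Log$, $\Log^*$ recorded just above.

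Once $g$ is known to lie in $(\R^2)^{\Diamond_B}_P$ with $\rK g=0$ in $B(0,R)$, I would apply the hypothesis: on $\gamma$ we have
$$\left|f(b)-\Re\left(e^{i\nu(b)}(\alpha,\beta+\gamma\log(b))\right)\right|\leq\eps,$$
while the asymptotics $\Log(b)=\Re(e^{i\nu(b)}\log b)+O(|b|^{-2})$ and $\Log^*(b)=\Re(e^{i\nu(b)}i\log b)+O(|b|^{-2})$ give, since $\Re(\gamma)\Re(e^{i\nu(b)}\log b)+\Im(\gamma)\Re(e^{i\nu(b)}i\log b)=\Re(e^{i\nu(b)}\gamma\log b)$,
$$g(b)=\Re\left(e^{i\nu(b)}(\alpha,\beta+\gamma\log b)\right)+O(|b|^{-2}).$$
On $\gamma$, $|b|$ is of order $R$, so $|b|^{-2}=O(R^{-2})$, which is harmless compared to $\eps$ (or can simply be absorbed by enlarging $\eps$ by an additive $O(R^{-2})$ term). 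Hence $\|(f-g)_{|\gamma}\|_\infty=O(\eps+R^{-2})=O(\eps)$ for the relevant range, and $f-g\in(\R^2)^{\Diamond_B}_P$ with $\rK(f-g)=0$ in $B(0,R)$. Lemma \ref{Lem:Pmax} then yields $|f(b)-g(b)|=O((1+\log|R/b|)\,\eps)$ for $b\in\overline\Diamond_B\cap B(0,R)$, which is precisely the claimed estimate once one substitutes the definition of $g$.

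The one genuine subtlety — the step I expect to be the main obstacle — is the monodromy/normalization bookkeeping: getting the jumps of $\Log$ and $\Log^*$ across the branch cut to combine into exactly the $P$-action on the two-component function, including the correct pairing between the value of the first coordinate on the primal lattice $\Z^2$ and on the dual lattice $(\Z^2)^*$ (where it appears as a harmonic conjugate rather than directly). This is where the constant $c$ in $\Log=\frac\pi2(\ind_{\Z^2}(a-c)+\ind_{(\Z^2)^*}a^*)$ and the precise sign conventions for $\nu$ and for the orientation of the cut all enter, and it is easy to be off by a sign or a factor. Everything else — discrete holomorphicity of $g$, the asymptotic matching on $\gamma$, and the final invocation of Lemma \ref{Lem:Pmax} — is routine given the machinery already set up. I would also note in passing that the error term $O(|b|^{-2})$ in the expansions of $\Log,\Log^*$ is uniformly smaller than the $O((1+\log(R/|b|))\eps)$ bound we are proving (up to possibly shrinking to $b$ bounded away from the puncture, or noting the logarithmic factor dominates near $b\to0$), so it does not degrade the stated conclusion.
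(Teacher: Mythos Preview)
Your approach is correct and is precisely what the paper intends: the statement is recorded as a Corollary of Lemma~\ref{Lem:Pmax} with no proof given, so the argument is exactly to subtract the explicit $P$-multivalued discrete holomorphic function $g$ built from $\Log,\Log^*$ and apply that lemma to the difference. Your identification of the monodromy bookkeeping as the only real subtlety is accurate; note in particular that the hypothesis only pins down the relation between $\alpha$ and $\gamma$ up to $O(\eps)$, so to make $g$ \emph{exactly} $P$-multivalued you should adjust $\alpha$ (not $\gamma$) to the value forced by $\gamma$, which perturbs the first coordinate by $O(\eps)$ uniformly and is harmlessly absorbed in the final bound.
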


We now want to construct and estimate $\rK_P^{-1}$, an inverting kernel for $\rK$ operating on $(\R^2)^{\Diamond_B}_P$. We observe that
$$f=%
(0,\uK^{-1}(.,w))
$$
is $P$-multivalued and satisfies $\rK f=%
(0,\delta_w)$. 
Let us fix a white vertex $w\in\Diamond_W$ and consider the problem: find $g\in(\R^2)^{\bar\Diamond_B}$ discrete holomorphic and with jump condition
\begin{align}\label{eq:jump}
g(b^+)-g(b^-)=\uK^{-1}(b,w)&&\forall b\in\partial
\end{align}
Given such a $g$, we may consider
$$f=%
\left(\uK^{-1}(.,w), g+n\uK^{-1}(.,w)\right)$$
on the $n$-th deck; this is $P$-multivalued and satisfies $\rK f=\delta_w$ on the zeroth deck. So we are left with solving \eqref{eq:jump}.

Up to translation and rotation we may assume $f=\frac{-1+i}2$, $\Re(w)\geq 0$ and $\partial=(-\N)\cup(-\frac{1+i}2-\N)$ (corresponding to black vertices on $\Z^2$ and its dual).
From there we obtain the explicit representation
\begin{align*}
2\pi g(b)&=-\sum_{n=0}^\infty\uK^{-1}(-n,w)(\Log(b+n)-\Log(b+n+1))+\sum_{n=0}^\infty\uK^{-1}(-n-\frac{1+i}2,w)(\Log^*(b+n)-\Log^*(b+n+1))
\end{align*}
Then one obtains the asymptotics
$$g(b)=\Re\left(e^{i(\nu(b)+\nu(w))}\frac{\log(b/w)}{\pi(b-w)}\right)+O(|w|^{-2})$$
when $|w|,|b|$ and $|b-w|$ are comparable (and the branch of $\log$ is chosen w.r.t. the branch cut); and the estimate
\begin{align*}
g(b)=O\left(\sum_{k\geq 0}\frac{1}{(k+|w|)(k+|b|)}\right)&=O(|w|^{-1}\log|w/b|)&{\rm if\ }|b|<|w|/2\\
&=O(|b|^{-1}\log|b/w|)&{\rm if\ }|b|>2|w|
\end{align*}
Let us summarize the previous discussion.

\begin{Lem}\label{Lem:1punct}
Let $P=\left(\begin{array}{cc}1&1\\0&1\end{array}\right)$ and $w\in\Diamond_W$. There is a unique matrix-valued function $h(w)\in (M_2(\R))^{\overline\Diamond_B}$ s.t.:
\begin{enumerate}
\item $\rK h=\delta_w\Id_2$.
\item $h(b^+)=h(b^-)P$ on $\partial$.
\item For $b,w,b-w$ comparable,
$$h(b)=\Re\left(\frac{e^{i(\nu(b)+\nu(w))}}{\pi(b-w)}
\left(\begin{array}{cc}
1&\frac{\log(b/w)}{2i\pi}\\
0&1\end{array}\right)\right)+O(1/|w|^2)$$
\item $h(b)=O(|w|^{-1}\log|w/b|)$ for $|w|>2|b|$ and $h(b)=O(|b|^{-1}\log|b/w|)$ for $|b|>2|w|$.
\end{enumerate}
This function is denoted by $\uK_{P}^{-1}(.,w)$.
\end{Lem}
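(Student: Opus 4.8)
The plan is to construct $h=\uK_P^{-1}(\cdot,w)$ explicitly by assembling two pieces: the "diagonal" part coming from the untwisted inverting kernel $\uK^{-1}(\cdot,w)$, and an off-diagonal part $g$ solving the scalar jump problem \eqref{eq:jump}, and then to read off properties (1)--(4) from the asymptotics already recorded for $\uK^{-1}$, $\Log$ and $\Log^*$. Concretely, on the $n$-th deck set
\begin{equation*}
h=\begin{pmatrix}\uK^{-1}(\cdot,w)&g+n\,\uK^{-1}(\cdot,w)\\0&\uK^{-1}(\cdot,w)\end{pmatrix},
\end{equation*}
where the matrix acts on row vectors. A direct check shows $h\circ\theta=hP$ (the deck shift adds one copy of $\uK^{-1}(\cdot,w)$ in the upper-right slot, which is exactly right-multiplication by $P$), and $\rK h=\delta_w\Id_2$ since $\rK\uK^{-1}(\cdot,w)=\delta_w$ and $g$ is discrete holomorphic away from the cut with the prescribed jump \eqref{eq:jump}. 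Thus properties (1) and (2) are immediate from the construction, and (3), (4) follow from the stated asymptotics $\uK^{-1}(b,w)=\Re(e^{i(\nu(b)+\nu(w))}/\pi(b-w))+O(|b-w|^{-3})$ together with the two displayed estimates for $g$: for comparable $|b|,|w|,|b-w|$ the leading term assembles into the claimed matrix $\begin{pmatrix}1&\log(b/w)/2i\pi\\0&1\end{pmatrix}$ (note $g$ carries exactly the factor $\log(b/w)/\pi(b-w)$, i.e. $\log(b/w)/2i\pi$ times the leading scalar), and the off-diagonal $n\,\uK^{-1}$ contribution is absorbed after choosing the branch of $\log$ consistently with the cut.

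For existence of $g$ I would record the explicit series representation already written out, namely
\begin{equation*}
2\pi g(b)=-\sum_{n\ge0}\uK^{-1}(-n,w)\big(\Log(b+n)-\Log(b+n+1)\big)+\sum_{n\ge0}\uK^{-1}(-n-\tfrac{1+i}2,w)\big(\Log^*(b+n)-\Log^*(b+n+1)\big),
\end{equation*}
valid after normalizing $f=\frac{-1+i}{2}$, $\Re(w)\ge0$, $\partial=(-\N)\cup(-\frac{1+i}2-\N)$ by translation and rotation. One checks that the "discrete derivative" increments $\Log(b+n)-\Log(b+n+1)$ and $\Log^*(b+n)-\Log^*(b+n+1)$ are $O(1/(|b|+n))$ and that they are discrete holomorphic in $b$ off the cut with a unit jump concentrated at the relevant lattice; summing against $\uK^{-1}(\mp n\cdots,w)=O(1/(|w|+n))$ gives absolute convergence and the two claimed bounds $O(|w|^{-1}\log|w/b|)$, $O(|b|^{-1}\log|b/w|)$ by the elementary estimate $\sum_{k\ge0}\frac1{(k+|w|)(k+|b|)}$. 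The fine asymptotic $g(b)=\Re(e^{i(\nu(b)+\nu(w))}\log(b/w)/\pi(b-w))+O(|w|^{-2})$ for comparable arguments follows by a Riemann-sum/Abel-summation comparison of this series with the continuum integral $\frac1{2\pi i}\int \log/( \cdot-w)$, using the $O(|x|^{-2})$ error terms in the asymptotics of $\Log$, $\Log^*$.

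Uniqueness is where the real content lies, and it is where I expect the main obstacle: a difference $h_1-h_2$ of two candidates is a genuinely $P$-multivalued discrete holomorphic function (no singularity, since the $\delta_w\Id_2$ cancels) which one wants to force to vanish using decay at infinity. The first coordinate column is single-valued discrete holomorphic, vanishing at infinity, hence identically zero by the ordinary maximum principle; but the second column is only multivalued, so one cannot directly invoke a maximum principle. The fix is Lemma \ref{Lem:Pmax}: applying it on $B(0,R)$ with $f=h_1-h_2$ gives $|f(b)|=O((1+\log(R/|b|))\|f_{|\gamma}\|_\infty)$, and the decay estimates (4) force $\|f_{|\gamma}\|_\infty=O(R^{-1}\log R)$, so the right-hand side tends to $0$ as $R\to\infty$ for each fixed $b$, giving $f\equiv0$. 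The one subtlety to handle carefully is that Lemma \ref{Lem:Pmax} is stated for $P=\begin{pmatrix}1&1\\0&1\end{pmatrix}$ exactly (the second coordinate "increases by $f_1$ per turn"), which matches our normalization, so no extra conjugation bookkeeping is needed; one simply notes the zero first coordinate kills the multivalued growth entirely and the second coordinate is then single-valued discrete holomorphic with decay, hence zero as well.
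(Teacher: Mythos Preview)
Your construction is exactly the paper's: the text preceding the lemma assembles $h$ from the untwisted kernel $\uK^{-1}(\cdot,w)$ on the diagonal and the correction $g$ solving the jump problem \eqref{eq:jump} in the $(1,2)$ slot, with $g$ given by the same explicit series in $\Log,\Log^*$ and the same asymptotic analysis; the lemma is stated as a summary of that discussion. Your addition is an explicit uniqueness argument via Lemma \ref{Lem:Pmax}, which the paper leaves implicit---your observation that once the first (single-valued) column of a difference vanishes, the second column becomes single-valued as well and is then handled by the ordinary maximum principle, is the clean way to close it.
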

For a general unipotent matrix $Q=GPG^{-1}$ (distinct from $\Id_2$), we set
$$\uK^{-1}_Q=\uK^{-1}_PG$$

\subsubsection{Parametrix and inverting kernel}

Our goal is now to estimate the inverting kernels for discrete rank 2 bundles with unipotent local monodromies, in particular near the singularities. The basic building blocks will be the fundamental solution of the corresponding continuous problem \eqref{eq:Fuchs} away from the singularities, and the discrete kernels constructed above near the singularities (Lemma \ref{Lem:1punct}).

Let $\Gamma=\Gamma_\delta=\delta\left(\Z\times(\frac 12+\Z)\right)$; $\delta>0$ is a small scaling parameter (mesh size); let $\Gamma^*=\delta((\frac 12+\Z)\times\Z)$ and $\Diamond$ the bipartite graph obtained by superimposing $\Gamma$ and $\Gamma^*$: black vertices of $\Gamma$ correspond to vertices of $\Gamma$ or $\Gamma^*$ and white vertices of $\Xi$ correspond to (midpoints of) edges of $\Gamma$ or $\Gamma^*$. We define a Kasteleyn operator as before by
\begin{align*}
(\rK f)(w)&=\frac \delta2\left(f(w+\frac 12)-f(w-\frac 12)+f(w+\frac i2)-f(w-\frac i2)\right)&{\rm if\ }\Im(w)\in \delta(\frac 12+\N)\\
(\rK f)(w)&=\frac \delta 2\left(f(w+\frac i2)-f(w-\frac i2)+f(w-\frac 12)-f(w+\frac 12)\right)&{\rm if\ }\Re(w)\in\delta \Z
\end{align*}
Up to scale and shift, this is the set-up considered in the previous subsection. In particular we have an inverting kernel $\uK^{-1}$ with $\rK\uK^{-1}(.,w)=\delta_w$ and
$$\uK^{-1}(b,w)=\Re\left(\frac{e^{i(\nu(b)+\nu(w))}}{\pi(b-w)}\right)+O(\frac{\delta}{(b-w)^2})$$

Let $\Xi$ be the restriction of $\Diamond$ to vertices with (strictly) positive imaginary part. If $f:\Xi_B\rightarrow\R$ is discrete holomorphic, it can be extended to a discrete holomorphic function on $\Diamond_B$ by setting $f(\overline b)=f(b)$ for $b\in\Xi\cap\Gamma$, $f(\overline b)=-f(b)$ for $b\in\Xi\cap\Gamma^*$ and $f(b)=0$ on $\Diamond_B\cap\R$. This is a simple discrete version of classical Schwarz reflection arguments. This also shows that bounded discrete holomorphic functions on $\Xi$ are constant on $\Gamma$ and vanish on $\Gamma^*$. Note also that a discrete holomorphic function on $\Xi$ restricts to a discrete harmonic function with Neumann conditions on $\Gamma$, and to a discrete harmonic function with Dirichlet conditions on $\Gamma^*$. 

Similarly, one can define an inverting kernel for $\rK:\Xi_B\rightarrow\Xi_W$ by
$$\uK^{-1}_\Xi(b,w)=\uK^{-1}(b,w)+e^{-2i\nu(w)}\uK^{-1}(b,\bar w)=\Re\left(\frac{e^{i\nu(b)}}\pi\left(\frac{e^{i\nu(w)}}{b-w}+\frac{e^{-i\nu(w)}}{b-\bar w}\right)\right)+O(\frac{\delta}{(b-w)^2})$$

Now we introduce punctures $\lambda_1,\dots,\lambda_n$ (located at the center of faces of $\Xi$) and a representation $\rho:\pi_1(\C\setminus\{\lambda_1,\dots,\lambda_n\})\rightarrow\SL_2(\R)$ with unipotent local monodromies and close enough to the trivial representation. As in the single puncture case, there is a natural notion of $\rho$-multivalued (vector- or matrix-) valued functions on $\Xi_B$ or $\Xi_W$, which we denote by $(\R^2)^{\Xi_.}_\rho$, with $.\in\{B,W\}$. More precisely, one can lift $\Xi$ to the universal cover of $\H\setminus\{\lambda_1,\dots,\lambda_n\}$ to obtain a bipartite graph $\tilde\Xi$. Then we consider 
$$(\R^2)^{\Xi_.}_\rho=\{f\in(\R^2)^{\tilde\Xi_.}: f\circ\theta(\gamma)=f\rho(\gamma)\}$$
where $\theta(\gamma)$ is the deck transformation corresponding to the (rooted) loop $\gamma$. We have a natural Kasteleyn operator
$$\rK:(\R^2)^{\Xi_B}_\rho\rightarrow(\R^2)^{\Xi_W}_\rho$$
Alternatively, one can choose cuts $\delta_1,\dots,\delta_n$ from $\lambda_1,\dots,\lambda_n$ to the boundary and consider a cut graph $\bar\Xi$ (duplicating black vertices along the cut). Then $(\R^2)^{\Xi}_\rho$ is identified with functions in $(\R^2)^{\bar\Xi}_\rho$ with prescribed Riemann-Hilbert conditions along the cuts.

We want to find and estimate a kernel $\rK_\rho^{-1}$ s.t. $\rK^{-1}_\rho(.,w)$ is $\rho$-multivalued and $\rK\rK^{-1}_\rho(.,w)=\delta_w\Id_2$ (on a given deck/cut domain). In the style of \cite{Dub_tors}, we construct a parametrix $\rS_\rho$ by patching the limiting continuous kernels away from the singularity with the basic kernels (with 0 or one puncture) described above.

As before, we duplicate punctures by $\lambda_{\bar j}=\overline{\lambda_j}$ and extend $\rho$ by $\rho(\bar\gamma)=\rho(\gamma)$. Consider $Y_0$ the fundamental solution (normalized by $Y_0(\infty)=\Id_2$ of \eqref{eq:Fuchs} with punctures $\lambda_1,\dots,\lambda_{\bar 1}$ and monodromy representation given by $\rho$. (Again for definiteness, we can extend the cuts $\delta_1,\dots,\delta_n$ symmetrically across $\R$ and consider the corresponding Riemann-Hilbert  problem). In the continuum, we consider
$$S^\nu_\rho(z,w)=e^{i\nu}\frac{Y_0(w)^{-1}Y_0(z)}{\pi(z-w)}+e^{-i\nu}\frac{Y_0(\bar w)^{-1}Y_0(z)}{\pi(z-\bar w)}=e^{i\nu}\frac{\Id_2}{\pi(z-w)}+R^\nu(w)+O(z-w)$$
so that $S_\rho(.,w)$ has the correct monodromy, vanishes at $\infty$ and is real along $\R$; and 
$$R^\nu(w)=\pi e^{i\nu}A(w)+e^{-i\nu}\frac{Y_0(\bar w)^{-1}Y_0(w)}{w-\bar w}.$$
Notice that $S^{\pi/2}_\rho\neq iS^0_\rho$, due to the $\R$-linear boundary conditions.

{\bf Case: $w$ away from punctures and $\R$}
Assume that $w$ is at distance at least $\eta=\eta(\delta)$ of punctures ($\eta$ a mesoscopic scale to be specified) and $\R$. Set
\begin{align*}
\rS_\rho(b,w)&=\Id_2\uK^{-1}(b,w)+\Re(e^{i\nu(b)}R^{\nu(w)}(w))&{\rm if\ }|b-w|\leq\eta/3\\
&=\Re(e^{i\nu(b)}S^{\nu(w)}_\rho(z,w))&{\rm if\ }|b-w|\geq\eta/3,|b-\lambda_i|\geq\eta^3\\
&=0&{\rm\ otherwise}
\end{align*}
We estimate
$$\|\rK \rS_\rho(.,w)-\Id_2\delta_w\|_1\leq C(\eta(\eta+\delta\eta^{-2})+\delta+\eta^2\log(\eta))$$
where $\|.\|_1$ is the $L_1$ norm w.r.t. counting measure. The errors correspond respectively to the patching near the singularity $w$, the continuous approximation at macroscopic scale, and the patching near the punctures.

{\bf Case: $w$ near $\R$.}
If $\Im(w)=r\leq\eta$, we expand
$$S_\rho^\nu(z,w)=\Id_2\left(\frac{e^{i\nu}}{\pi(z-w)}+\frac {e^{-i\nu}}{\pi(z-\bar w)}\right)+R^\nu_1(w)+O(z-w)$$
and set
\begin{align*}
\rS_\rho(b,w)&=\Id_2(\uK^{-1}(b,w)+e^{-2i\nu(w)}\uK^{-1}(b,\bar w))+\Re(e^{i\nu(b)}R^{\nu(w)}_1(w))&{\rm if\ }|b-w|\leq 2\eta\\
&=\Re(e^{i\nu(b)}S_\rho^{\nu(w)}(z,w))&{\rm if\ }|b-w|\geq\eta/3,|b-\lambda_i|\geq\eta^2\\
&=0&{\rm\ otherwise}
\end{align*}
then estimate again
$$\|\rK \rS_\rho(.,w)-\Id_2\delta_w\|_1\leq C(\eta(\eta+\delta\eta^{-2})+\delta+\eta^2\log(\eta))$$
with errors coming from the patching near $w$, the continuous approximation in the bulk, and the patching near the punctures.

{\bf Case: $w$ near a puncture.}
If $w$ is at distance $r\leq\eta$ of a puncture $\lambda\in\{\lambda_1,\dots,\lambda_n\}$, we let %
$P$ be the jump matrix across the corresponding cut $\delta$. Up to gauge change (change of basis on the bundle) we may assume that $P=\left(\begin{array}{cc}1&1\\0&1\end{array}\right)$. Then
$$S_\rho^\nu(z,w)\left(\begin{array}{cc}1&-\frac{1}{2i\pi}\log((z-\lambda)/(w-\lambda))\\0&1\end{array}\right)$$
has a removable singularity at $\lambda$ and consequently one may expand
$$S_\rho^\nu(z,w)=\frac{e^{i\nu}}{\pi(z-w)}\left(\begin{array}{cc}1&\frac{1}{2i\pi}\log((z-\lambda)/(w-\lambda))\\0&1\end{array}\right)+R_1^\nu(w)+R_2^\nu(w)\log(z-\lambda)+O((z-\lambda)\log(z-\lambda))$$
Then we set
\begin{align*}
\rS_\rho(b,w)&=\rK^{-1}_P(b,w)+\Re(e^{i\nu(b)}R_1^{\nu(w)}(w))\\
&+\Re(R_2^{\nu(w)}(w))\Log(b-\lambda)+\Im(R_2^{\nu(w)}(w))\Log^*(b-w)&{\rm if\ }|b-\lambda|\leq 2\eta\\
&=\Re(e^{i\nu(b)}S_\rho^{\nu(w)}(z,w))&{\rm if\ }\min_i|b-\lambda_i|\geq\eta^2,|b-\lambda|>2\eta\\
&=0&{\rm\ otherwise}
\end{align*}
We estimate
$$\|\rK \rS_\rho(.,w)-\Id_2\delta_w\|_1\leq C(\eta(\eta+\delta\eta^{-2}+\eta)+\delta+\eta^2\log(\eta))$$
where the errors come from patching near $\lambda_1$, the continuous approximation error away from the singularities, and the patching error near $\lambda_i\neq\lambda$.

Consequently, if we set $\eta=\delta^{1/3}$, we have
$$\sup_w\|\rK \rS_\rho(.,w)-\Id_2\delta_w\|_1=O(\delta^{2/3}\log(\delta))$$

Classically, if $T$ is a kernel operator, say on a countable set $E$ w.r.t. counting measure, we have
$$\|Tf\|_1=\sum_{x\in E}|\sum_{y\in E}T(x,y)f(y)|\leq (\sup_y\|T(.,y)\|_1)\|f\|_1$$
Here, if we consider the operator $\rK\rS_\rho-\Id:(\R^2)^{\Xi_W}_\rho\rightarrow(\R^2)^{\Xi_W}_\rho$ (with a reference $L^1$ norm obtained from a choice of cuts), we obtain
$$\|T\|_{L^1\rightarrow L^1}=O(\delta^{2/3}\log(\delta))$$
where $\rK\rS_\rho=\Id+T$, and we can define an (exact) right inverse $\rK^{-1}_\rho$ of $\rK:(\R^2)^{\Xi_B}_\rho\rightarrow(\R^2)^{\Xi_W}_\rho$ by:
$$\rK^{-1}_\rho=\rS_\rho(\Id+T)^{-1}$$
Then
$$\rK^{-1}_\rho-\rS_\rho=-\rS_\rho T(\Id+T)^{-1}=-\rS_\rho T+\rS_\rho T^2(\Id+T)^{-1}$$
We are interested in particular in estimating $\rK^{-1}_\rho(b,w)$ for $b,w$ within $O(\delta)$ of a puncture $\lambda=\lambda_i$. By construction, if $b,w$ within $\eta^2$ of a puncture $\lambda$, $(\rS_\rho T)(b,w)=0$; indeed, $T(w',w)=0$ if $|w'-\lambda|\leq 2\eta$ and $\rS_\rho(b,w')=0$ if $|w'-\lambda|\geq\eta$. We have
$$\|T^2(\Id+T)^{-1}\delta_w\|_1=O(\delta^{4/3}(\log\delta)^2)$$
and $\sup_{w'}|\rS_\rho(b,w')|=O(\delta^{-1})$. We conclude:
$$|\rK^{-1}_\rho(b,w)-\rS_\rho(b,w)|=O(\delta^{1/3}(\log\delta)^2)$$
for $w,b$ within $O(\delta^{2/3})$ of a puncture.

\subsection{Variation}

We are now interested in the logarithmic variation of the determinant under displacement of a puncture. Let us single out a puncture, say $\lambda_1$, located in the center of a face of $\Xi$; $\lambda'_1$ is the center of an adjacent face. The other punctures and the monodromy representation $\rho$ are fixed. For definiteness we may consider a cut $\delta'_1$ from the boundary with last two vertices $\lambda_1,\lambda'_1$; $\delta_1$ denotes the sub-cut stopped at $\lambda_1$; cuts to other punctures are fixed. We let $\rK_\rho$ (resp. $\rK'_\rho$) be the Kasteleyn operator with jump at $\delta_1$ (resp. $\delta'_1$), so that $\rK'_\rho$ and $\rK_\rho$ differ by a rank 2 operator, and $\rK'_\rho\rK^{-1}_\rho$ differs from the identity by a rank $2$ operator. 

Let $(bw)$ be the edge of $\Xi$ crossed by $(\lambda_1\lambda'_1)$. Let $P$ be the unipotent matrix corresponding to the cut $\delta_1$. The $2\times 2$ block of $\rK'_\rho\rK_\rho^{-1}$ corresponding to $w$ is: 
$$\Id_2+\rK(w,b)\left(P^{\pm 1}-\Id_2\right)\rK_\rho^{-1}(b,w)$$
with $\pm 1=1$ (resp. $-1$) if $b$ to the right (resp. left) of $\delta_1'$, oriented from boundary to puncture.

We expressed $\rK^{-1}_\rho$ near a puncture $\lambda=\lambda_i$ in terms of the expansion of its continuous counterpart $S_\rho$ near $\lambda$. In order to establish the connection with $\tau$-functions, we want to expand $S_\rho$ near $\lambda$ in terms of the $A_j$'s of \eqref{eq:Fuchs}. Near $\lambda$ we expand
$$A(z)=\frac{A_i}{z-\lambda}+B+O(z-\lambda)$$
with $B=\sum_{j\neq i}\frac{A_j}{\lambda_j-\lambda_i}$. The leading behavior of the solution $Y$ of \eqref{eq:Fuchs} near $\lambda$ (normalized by $Y(w)=\Id_2$) is given by the solution of 
$$\left\{\begin{array}{rl}
Y_\lambda(w)&=\Id_2\\
\frac{d}{dz}Y_\lambda(z)&=\frac{A_i}{z-\lambda}Y_\lambda(z)\end{array}\right.$$
ie
$$Y_\lambda(z)=\exp\left({A_i}\log\frac{z-\lambda}{w-\lambda}\right)=\Id_2+{A_i}\log\frac{z-\lambda}{w-\lambda}$$
by nilpotency. The correction is given by Duhamel's formula:
\begin{align*}
Y(z)&=Y_\lambda(z)+\int_w^zY_\lambda(z)Y_\lambda(u)^{-1}BY_\lambda(u)du+O\left((z-w)^2\log^2\frac{z-\lambda}{w-\lambda}\right)\\
&=Y_\lambda(z)+\int_w^z\left(\Id_2+{A_i}\log\frac{z-\lambda}{u-\lambda}\right)B\left(\Id_2+{A_i}\log\frac{u-\lambda}{w-\lambda}\right)du+O\left((z-w)^2\log^2\frac{z-\lambda}{w-\lambda}\right)\\
\end{align*}
Taking into account
\begin{align*}
\int_1^x\log(t)dt&=x\log(x)-x+1\\
\int_1^x\log(x/t)dt&=-\log(x)+x-1\\
\int_1^x\log(t)\log(x/t)dt&=x\log(x)+\log(x)-2(x-1)
\end{align*}
we obtain
\begin{align*}
Y(z)&=Y_\lambda(z)+(z-w)\left([A_i,B]+B-2A_iBA_i\right)+((BA_i+A_iBA_i)(z-\lambda)\\
&+(-A_iB+A_iBA_i)(w-\lambda))\log\frac{z-\lambda}{w-\lambda}+O((z-w)^2\log^2(z-w))
\end{align*}
Since $YY_0(w)=Y_0$, we get for $|w-\lambda|\ll|z-\lambda|\ll 1$
\begin{align*}
S_\rho^\nu(z)&=e^{i\nu}\frac{Y_0(w)^{-1}Y_0(z)}{\pi(z-w)}+e^{-i\nu}\frac{Y_0(\bar w)^{-1}Y_0(z)}{\pi(z-\bar w)}=\left(e^{i\nu}\frac{Y_0(w)^{-1}}{\pi(z-w)}+e^{-i\nu}\frac{Y_0(\bar w)^{-1}}{\pi(z-\bar w)}\right)Y(z)Y_0(w)\\
&=\frac{e^{i\nu}}{\pi(z-w)}\left(\Id_2+Y_0(w)^{-1}A_iY_0(w)\log\frac{z-\lambda}{w-\lambda}\right)+\frac{e^{i\nu}}\pi Y_0(w)^{-1}([A_i,B]+B-2A_iBA_i)Y_0(w)\\
&+e^{-i\nu}\frac{Y_0(\bar w)^{-1}Y_0(w)}{\pi(w-\bar w)}+O((z-\lambda)\log(w-\lambda))
\end{align*}
which identifies
$$R^\nu_1(w)=\frac{e^{i\nu}}\pi Y_0(w)^{-1}([A_i,B]+B-2A_iBA_i)Y_0(w)+e^{-i\nu}\frac{Y_0(\bar w)^{-1}Y_0(w)}{\pi(w-\bar w)}++O((w-\lambda)\log(w-\lambda))$$
By considering a loop $\gamma$ coming from $\infty$ to $w$, making a small circle around $\lambda$, and then going back to $\infty$, we get:
$$P=\rho(\gamma)=Y_0(w)^{-1}(\Id_2+2i\pi A_i+O(w-\lambda))Y_0(w)$$
We conclude that
\begin{align*}
\Tr((P^{\pm 1}-\Id_2)R_1^\nu(w))&=\pm 2i\Tr\left(e^{i\nu}A_iB+e^{-i\nu}\frac{Y_0(w)Y_0(\bar w)^{-1}}{w-\bar w}A_i\right)\\
\Tr((P^{\pm 1}-\Id_2)R_2^\nu(w))&=O(w-\lambda)
\end{align*}
Since $P^{\pm 1}-\Id_2$ has rank 1,
$$\det(\Id_2+\rK(w,b)(P^{\pm 1}-\Id_2)\rK_\rho^{-1}(b,w))=1+\rK(w,b)\Tr((P^{\pm 1}-\Id_2)\rK_\rho^{-1}(b,w))$$
and then
$$\det(\rK'_\rho\rK_\rho^{-1})=1\pm\rK(w,b)\Re\left(2i e^{i\nu(b)}\Tr\left(e^{i\nu(w)}A_iB+e^{-i\nu(w)}\frac{Y_0(w)Y_0(\bar w)^{-1}}{w-\bar w}A_i\right)\right)+o(\delta)$$
Taking into account $\lambda'_1-\lambda_1=\pm i(b-w)$ and $\rK(w,b)=(b-w)e^{-i\nu(b)-i\nu(w)}$, we get
$$\det(\rK'_\rho\rK_\rho^{-1})=1+\Re\left(2(\lambda'_1-\lambda_1)\Tr\left(A_iB+e^{-2i\nu(w)}\frac{Y_0(w)Y_0(\bar w)^{-1}}{w-\bar w}A_i\right)\right)+o(\delta)$$
On the other hand,
$$\frac{\partial}{\partial\lambda_i}\log\tau=\Tr(A_iB), \frac{\partial}{\partial\lambda_{\bar i}}\log\tau=\Tr(\overline{A_iB})
$$
on $\Lambda$, and consequently
$$\log\tau(\lambda'_1,\lambda_2,\dots,\overline{\lambda'_1},\dots)-\log\tau(\lambda_1,\lambda_2,\dots,\overline{\lambda_1},\dots))
=2\Re((\lambda'_1-\lambda_1)\Tr(A_iB))+o(\lambda'_1-\lambda_1)
$$
Given this local estimate for a microscopic displacement of the puncture, we may integrate to estimate the variation under macroscopic displacements. Observe that $e^{-2i\nu(w)}=\pm 1$ depending on whether $w$ corresponds to a horizontal or vertical edge of the primal square lattice. By choosing the cut to run along a simple path on that primal lattice, the sign $e^{-2i\nu(w)}$ alternates along the cut. 

In conclusion we have obtained:

\begin{Lem}\label{Lem:discrvar}
Let $\lambda_1,\dots,\lambda_n$ be distinct punctures in $\H$ and $\rho:\pi_1(\H\setminus\{\lambda_1,\dots,\lambda_n\})\rightarrow\SL_2(\R)$ be a representation with unipotent local monodromies close enough to the identity.  There is a compact neighborhood of $(\lambda_1,\dots,\lambda_n)$ s.t. for $(\lambda'_1,\dots,\lambda'_n)$ in that neighborhood,
$$\log\det(\rK'_\rho\rK_{\Id}^{-1})-\log\det(\rK_\rho\rK_{\Id}^{-1})=\log\tau(\lambda'_1,\dots,\lambda'_n;\rho)-\log\tau(\lambda_1,\dots,\lambda_n;\rho)+o(1)$$
uniformly as the mesh $\delta\searrow 0$.
\end{Lem}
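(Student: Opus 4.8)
The plan is to reduce a macroscopic displacement to a telescoping sum of microscopic (single-face) moves and to match the resulting sums on the two sides. Write the passage from $(\lambda_1,\dots,\lambda_n)$ to $(\lambda'_1,\dots,\lambda'_n)$ as a concatenation of single-puncture moves (first bring $\lambda_1$ to $\lambda'_1$ keeping the others fixed, then $\lambda_2$ to $\lambda'_2$, and so on), each realized as a path on $\Xi$ through adjacent faces; taking the compact neighbourhood small enough, all intermediate configurations keep the punctures at distance $\ge\eta_0$ from one another and from $\R$, with pairwise disjoint branch cuts to the boundary. Since $\rK_{\Id}$ is unchanged, $\log\det(\rK'_\rho\rK_{\Id}^{-1})-\log\det(\rK_\rho\rK_{\Id}^{-1})=\log\det(\rK'_\rho\rK_\rho^{-1})$, which telescopes over the elementary moves; likewise $\log\tau(\lambda'_1,\dots,\lambda'_n;\rho)-\log\tau(\lambda_1,\dots,\lambda_n;\rho)$ telescopes, and since $\omega$ in \eqref{eq:dlogtau} is closed (and $\tau$ is well defined, Lemmas \ref{Lem:tauchoice}, \ref{Lem:taumoeb}) the choice of intermediate path does not matter.

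For a single microscopic move of $\lambda_i$ across an edge $(bw)$, the computation carried out above applies: $\rK'_\rho\rK_\rho^{-1}-\Id$ has rank $2$, and inserting $\rK^{-1}_\rho(b,w)=\rS_\rho(b,w)+O(\delta^{1/3}(\log\delta)^2)$ (valid for $b,w$ at distance $O(\delta)$ from $\lambda_i$) together with the local expansion of $S_\rho$ near $\lambda_i$ in terms of $A_i$, $B=\sum_{j\ne i}A_j/(\lambda_j-\lambda_i)$ and the reflection kernel $Y_0(w)Y_0(\bar w)^{-1}/(w-\bar w)$, yields
\begin{align*}
\det(\rK'_\rho\rK_\rho^{-1})=1&+2\Re\!\left((\lambda'_i-\lambda_i)\,\Tr\!\left(A_iB+e^{-2i\nu(w)}\frac{Y_0(w)Y_0(\bar w)^{-1}}{w-\bar w}\,A_i\right)\right)\\
&+O(\delta^{4/3}(\log\delta)^2).
\end{align*}
What I would emphasise here is \emph{uniformity}: the implicit constants in the parametrix bound and in the Duhamel expansion of $Y$ depend only on the lower bound $\eta_0$ on the mutual distances and distances to $\R$, on the diameter of the configuration, and on $\rho$; they are therefore uniform over the compact neighbourhood (and over $i$ and over the direction of the step) once $\delta$ is small. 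Summing over the $O(1/\delta)$ elementary moves, the accumulated errors contribute $O(\delta^{1/3}(\log\delta)^2)=o(1)$, and each $\log(1+O(\delta))=O(\delta)+O(\delta^2)$ correction is absorbed similarly.

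Two contributions survive. First, $\sum_k 2\Re\big((\lambda_i^{(k+1)}-\lambda_i^{(k)})\,\Tr(A_iB)\big)$, with the $A$'s evaluated at the $k$-th configuration, is a Riemann sum for $\int 2\Re(\Tr(A_iB)\,d\lambda_i)$ along the path traced by $\lambda_i$; since the Schlesinger solution, hence $\Tr(A_iB)$, is real-analytic on $\Lambda$ away from the boundary, the Riemann-sum error is $O(\delta)$ per puncture, and the integral is exactly the increment of $\log\tau$ over that move because $\partial_{\lambda_i}\log\tau=\Tr(A_iB)$, $\partial_{\lambda_{\bar i}}\log\tau=\Tr(\overline{A_iB})$ on $\Lambda$; telescoping over all $n$ moves reproduces $\log\tau(\lambda'_1,\dots;\rho)-\log\tau(\lambda_1,\dots;\rho)+O(\delta)$. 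Second, the reflection contribution $\sum_k 2\Re\big((\lambda_i^{(k+1)}-\lambda_i^{(k)})\,e^{-2i\nu(w_k)}\,\Tr(\frac{Y_0(w_k)Y_0(\overline{w_k})^{-1}}{w_k-\overline{w_k}}A_i)\big)$ is only of order $\delta$ per step, so it is not controlled by the preceding error bookkeeping. Here one routes the growing cut along a primal-lattice path so that the sign $e^{-2i\nu(w_k)}=\pm1$ alternates along it; the accompanying kernel varies by $O(\delta)$ per step (Lipschitz in the puncture position, with $w_k\to\lambda_i$), so summation by parts bounds this sum by $O(\delta)\cdot(\text{total variation})+O(\delta)=O(\delta)=o(1)$. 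Collecting the three pieces gives the claimed identity, uniformly as $\delta\searrow0$.

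The genuinely delicate point, and the main obstacle, is the reflection term: being of the same order $\delta$ as the main term it cannot be absorbed into an error estimate, and its cancellation relies on a precise routing of the branch cut (so that $e^{-2i\nu(w)}$ oscillates) together with the slow variation of the $Y_0(w)Y_0(\bar w)^{-1}/(w-\bar w)$ factor. Checking that such a routing can always be realized inside the prescribed compact neighbourhood with disjoint cuts, while simultaneously keeping all parametrix and Duhamel estimates uniform as the configuration moves, is where most of the care goes; the remainder is bookkeeping of $o(\delta)$-per-step errors over $O(1/\delta)$ steps.
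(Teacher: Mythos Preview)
Your proposal is correct and follows essentially the same route as the paper: telescoping over microscopic face-moves, using the parametrix estimate $\rK_\rho^{-1}=\rS_\rho+O(\delta^{1/3}(\log\delta)^2)$ near a puncture and the Duhamel expansion of $S_\rho$ to obtain the one-step formula, then identifying the Riemann sum with the increment of $\log\tau$ and killing the reflection term by routing the cut along a primal-lattice path so that $e^{-2i\nu(w)}$ alternates. Your write-up is in fact more explicit than the paper's on the error bookkeeping and the summation-by-parts cancellation of the oscillating term.
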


\subsection{Near boundary estimates}

We have estimated precisely the logarithmic variation of determinants for punctures away from the boundary. In order to complete the argument, we need to show that, for punctures close to the boundary, $\det(\rK_\rho\rK_{\Id}^{-1})$ is close to 1 uniformly in the mesh; this is the discrete counterpart of Lemma \ref{Lem:taubound}. As earlier, the argument is based on constructing a parametrix for $\rK_\rho$, but we shall need here somewhat different constructions and (less precise, but uniform near the boundary) estimates.

We start with the case $n=1$: we have a single puncture $\lambda\in\H$ (and, by reflection, $\overline{\lambda}\in (-\H)$). We can take the branch cut to be vertical. Up to conjugation, the corresponding monodromy matrix can be chosen to be $P=\left(\begin{array}{cc} 1&1\\0&1\end{array}\right)$.

The operator $\rK_\rho\rK^{-1}-\Id$ has finite rank and is strictly upper triangular (for a suitable ordering of vertices); consequently $\det(\rK_\rho\rK_{\Id}^{-1})=1$ (compare with Corollary \ref{Cor:tau1}).
Then we need to estimate the inverting kernel $\rK_\rho^{-1}$. We can construct it essentially as in Lemma \ref{Lem:1punct}, with the branch cut running from $\lambda$ to $\bar{\lambda}$ (rather than from $0$ to $\infty$) and also taking into account the boundary condition on $\R$.

If we start from 
$$b\mapsto\rK^{-1}_{\Id_2}(b,w)=(\uK^{-1}(b,w)+e^{-2i\nu(w)}\uK^{-1}(b,\bar w))\Id_2$$
we have a discrete holomorphic function (except at $w$) on $\Xi$, with an incorrect jump across the cut. If $\lambda'=\Re(\lambda)+is\Im(\lambda)$, $s\in(0,1)$, is another point on the cut, one can construct an explicit discrete holomorphic function with jump $1$ (on $\Xi\cap\Gamma$), across $[\overline{\lambda'},\lambda']$, namely 
$$b\mapsto \Log(b-\lambda')-\Log(b-\overline{\lambda'})=O(\Im(\lambda')/|b-\lambda'|)$$ 
By varying $\lambda'$ along the cut one can generate a discrete holomorphic function with prescribed jump (proceeding similarly on $\Xi\cap\Gamma^*$). This allows to construct an inverting kernel $\rK_{P,\lambda}^{-1}$ that vanishes at infinity. 

We are interested in particular in the case where $w$ is adjacent to the cut $[\bar\lambda,\lambda]$ and $b$ is at macroscopic distance of $\lambda$ (up to translation we assume that $\Re(\lambda)=O(\delta)$). In order to estimate the correction $\rK^{-1}_{P,\lambda}-\rK^{-1}_{\Id_2,\lambda}$, it is convenient to change gauge and take the cut on $[\lambda,i\infty)\cup(-i\infty,\bar\lambda]$ (so that the cut is as far away of the pole $w$ as possible). Reasoning as in Lemma \ref{Lem:taubound} we obtain (for $w$ within $O(\delta)$ of $[\bar\lambda,\lambda]$ and $\Re(b)$ of order 1)
$$|(\rK^{-1}_{P,\lambda}-\rK^{-1}_{\Id_2,\lambda})(b,w)|\leq C\sum_{k=|w-\lambda|\delta^{-1}}^\infty\frac{\delta}{(k\delta)(k\delta+1)}=O(\log|w-\lambda|)$$
We also need an estimate when $w$ is within $O(\delta)$ of $\lambda$ and $\Re(b)$ of order 1. In this case we keep the cut on $[\bar\lambda,\lambda]$ to obtain
$$|(\rK^{-1}_{P,\lambda}-\rK^{-1}_{\Id_2,\lambda})(b,w)|\leq C\sum_{k=1}^{\delta^{-1}\Im\lambda}\frac{\Im(\lambda)}{k^2\delta}=O(\delta^{-1}\Im\lambda)$$

We now turn to the general case, where $\lambda_1,\dots,\lambda_n$ are at pairwise macroscopic distance (bounded away from $0$) and close to the boundary; $\Im(\lambda_i)\leq\frac \eps 3$, $\eps$ small but fixed (as $\delta\searrow 0$). We construct a parametrix for $\rK_\rho$ in the following way. For any $w$, we set
\begin{align*}
\rS_\rho(b,w)&=\rK^{-1}_{P_i,\lambda_i}(b,w)&{\rm\ if\ }|b-\lambda_j|\geq\eps{\rm\ for\ } j\neq i\\
&=0&{\rm otherwise}
\end{align*}
where $\lambda_i$ is the puncture closest to $w$ and $P_i$ is the unipotent monodromy matrix around $\lambda_i$. Set $T=\rK_\rho\rS_\rho-\Id$, an operator on $(\R^2)^{\Xi_W}$. By construction, $T(w,w')=0$ unless $w$ is a distance $\eps$ of a puncture, in which case $T(w,w')=O(\delta\log|\lambda'-w'|)$ ($\lambda'$ the puncture closest to $w'$). Thus for fixed $w'$ at distance $\eta$ of the closest puncture
$$\sum_{w}|T(w,w')|=O(\eps\log\eta)$$
and more generally $\sum_{w}|T^n(w,w')|=O((\eps\log\eps)^{n-1}\eps\log\eta)$ (where $T^n$ denotes the $n$-th power of the kernel operator $T$).
This justifies the expansion
$$\rK_\rho^{-1}=\rS_\rho(\Id+T)^{-1}=\rS_\rho-\rS_\rho T+\rS_\rho T^2-\dots$$
Since $S_\rho(.,w)$ vanishes at infinity, one verifies easily that so does $\rK_\rho^{-1}(.,w)$.
Consider the situation where $b$ and $w$ are at small but macroscopic distance of a puncture $\lambda$, say
$$3\eps<|b-\lambda|<4\eps<5\eps<|w-\lambda|<6\eps$$
Then 
$$\rK_\rho^{-1}(.,w)=\rS_\rho(.,w)-\rS_\rho T(\Id+T)^{-1}(\delta_w\Id_2)$$
with $T(\Id+T)^{-1}(\delta_w\Id_2)=O(\eps\log\eps)$ at distance $\eps$ of punctures and $0$ otherwise. It follows that $\rK_\rho^{-1}(b,w)=O(1+|\eps\log\eps|)$.

Finally we wish to estimate $(K_\rho^{-1}-\rS_\rho)(w,b)$ for $b,w$ within $O(\delta)$ of a puncture $\lambda$. More precisely, we wish to estimate the matrix element
$$\Tr((P-\Id_2)(K_\rho^{-1}-\rS_\rho)(w,b))$$
As before, we may assume $P=\left(\begin{array}{cc}1&1\\0&1\end{array}\right)$ up to gauge change. We observe that $\rS_\rho^{-1}(w,b)\left(\begin{array}{c}1\\0\end{array}\right)$ is single-valued and equal to $\vphantom{()}^t(\uK^{-1}(b,w)\pm\uK^{-1}(b,w),0)$, and in particular is of order 1 if $w$ is near to the singularity and $b$ is at macroscopic distance.

If $B=B(\lambda,c\eps)$, we have
$$\rK_\rho^{-1}(.,w)=\ind_B\rS_\rho^{-1}+\rK_\rho^{-1}(\rK_\rho(\ind_{B^c}\rS_\rho(.,w)))$$
since both sides have the same ``residue" $\delta_w\Id_2$, are $\rho$-multivalued and vanish at infinity. By the estimate on $\rK_\rho^{-1}$ at macroscopic distance. It follows that
$$f_w(b)=(K_\rho^{-1}-\rS_\rho)(w,b)\left(\begin{array}{c}1\\0\end{array}\right)=O(1)$$
for $w$ near the singularity and for $b$ at small but macroscopic distance of the singularity. As in Lemma \ref{Lem:1punct}, it follows that
$$f_w(b)=O\left(\left(\begin{array}{c}1+\log|b-\lambda|\\1\end{array}\right)\right)$$
By considering the second component, we get
$$\Tr((P-\Id_2)(K_\rho^{-1}-\rS_\rho)(w,b))=O(1)$$
for $w,b$ close to the puncture. Upon displacing the puncture $\lambda$ across the edge $(wb)$, $\log\det(\rK_\rho{\rK_{\Id}}^{-1})$ is incremented by
$$O(\delta\Tr(P-\Id_2)(K_\rho^{-1}-\rS_\rho)(w,b)))$$
This gives the following estimate:

\begin{Lem}\label{Lem:nearbounddiscr}
Fix a representation $\rho:\pi_1(\H\setminus\Lambda)\rightarrow\SL_2(\R)$ with unipotent local monodromies and $\eta>0$. Then there is $C>0$ such that if $|\Re(\lambda_i-\lambda_j)|\geq\eta$ for $i\neq j$ and $\Im(\lambda_i)\leq\eps$ for all $i$, then
$$|\log\det(\rK_\rho\rK_{\Id}^{-1})|\leq C\eps$$
for $\eps>0$ small enough.
\end{Lem}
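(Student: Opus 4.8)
The plan is to upgrade the microscopic-displacement estimate of the previous subsection into a global bound by integrating along a deformation that lowers all punctures down to the real axis, the point being that the per-step cost stays $O(\delta)$ uniformly in the mesh. Concretely, the preceding parametrix construction yields the following uniform estimate: if all punctures satisfy $\Im(\lambda_j)\le\eps$ and $|\Re(\lambda_i-\lambda_j)|\ge\eta$ for $i\ne j$, then displacing one puncture $\lambda_i$ across a single adjacent edge $(wb)$ changes $\log\det(\rK_\rho\rK_{\Id}^{-1})$ by
$$O\bigl(\delta\,\Tr\bigl((P_i^{\pm1}-\Id_2)(\rK_\rho^{-1}-\rS_\rho)(w,b)\bigr)\bigr)=O(\delta),$$
with a constant uniform in $\delta$. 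I would extract this from: the near-boundary parametrix $\rS_\rho$ patched from the reflected single-puncture kernels $\rK^{-1}_{P_i,\lambda_i}$ (cut joining $\lambda_i$ to $\bar\lambda_i$); the Neumann inversion $\rK_\rho^{-1}=\rS_\rho(\Id+T)^{-1}$, $T=\rK_\rho\rS_\rho-\Id$ of small norm; the macroscopic-distance control $\rK_\rho^{-1}(b,w)=O(1+|\eps\log\eps|)$; and the bound $\Tr((P_i-\Id_2)(\rK_\rho^{-1}-\rS_\rho)(w,b))=O(1)$ near $\lambda_i$, the last obtained from the single-valuedness of the first coordinate of $\rK_\rho^{-1}(\cdot,w)-\rS_\rho(\cdot,w)$ together with a Lemma \ref{Lem:Pmax}-type estimate on the second.

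Next I would record the base case: once all punctures have been pushed into faces of $\Xi$ adjacent to $\R$, $\det(\rK_\rho\rK_{\Id}^{-1})=1$. This follows by induction on $n$, removing the punctures one at a time: a puncture in a boundary-adjacent face has a cut of length $O(\delta)$, so it modifies $\rK_\rho$ by a rank-one operator whose contribution to the determinant involves the inverting kernel evaluated at a black vertex on $\R$, which vanishes by the Dirichlet condition; hence such a puncture may be deleted from $\rho$ without changing the determinant, and the case $n=0$ is trivial (for $n=1$ this is the strictly upper-triangular structure already noted). To conclude, given a configuration with $\Im(\lambda_j)\le\eps$ and $|\Re(\lambda_i-\lambda_j)|\ge\eta$, I would lower each puncture vertically holding its real part fixed, so that horizontal separations stay $\ge\eta$ and all heights stay $\le\eps$ along the way, until it reaches a face adjacent to $\R$. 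Each puncture needs $O(\eps/\delta)$ microscopic moves costing $O(\delta)$ each, so the total change of $\log\det(\rK_\rho\rK_{\Id}^{-1})$ is $O(n\eps)=O(\eps)$; combined with the base case this gives $|\log\det(\rK_\rho\rK_{\Id}^{-1})|\le C\eps$ for $\eps$ small enough.

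The main obstacle is the uniform $O(1)$ estimate in the first step: controlling $\Tr((P_i-\Id_2)(\rK_\rho^{-1}-\rS_\rho)(w,b))$ for $w,b$ within $O(\delta)$ of a puncture lying only $O(\eps)$ from the boundary, uniformly as $\delta\to0$. The bulk parametrix of the previous subsection, with its error bound $O(\delta^{2/3}\log\delta)$, degenerates when a puncture approaches $\partial\H$, so one must instead work with the reflected kernel $\rK^{-1}_{P,\lambda}$, control the correction $\rK^{-1}_{P,\lambda}-\rK^{-1}_{\Id_2,\lambda}$ by sums of the type $\sum_k\delta/((k\delta)(k\delta+1))=O(\log|w-\lambda|)$, and then propagate this $O(1)$ control from macroscopic distance into the singularity via the discrete maximum principle — it is this propagation near a near-boundary singularity that requires the most care.
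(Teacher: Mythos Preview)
Your proposal is correct and follows essentially the same route as the paper: build a near-boundary parametrix $\rS_\rho$ from reflected single-puncture kernels $\rK^{-1}_{P_i,\lambda_i}$, invert via the Neumann series $\rK_\rho^{-1}=\rS_\rho(\Id+T)^{-1}$, bootstrap the $O(1)$ control on $\Tr((P_i-\Id_2)(\rK_\rho^{-1}-\rS_\rho))$ from macroscopic distance into the singularity via the single-valued first coordinate plus a Lemma~\ref{Lem:Pmax} argument, and then sum the $O(\delta)$ increments over $O(\eps/\delta)$ vertical steps. One small remark on your base case: the cleanest justification is not a Dirichlet vanishing of the kernel at a boundary black vertex (there are no vertices of $\Xi$ on $\R$), but simply that once a puncture sits in a bottom face its vertical cut to $\R$ crosses no edge of $\Xi$, so that puncture contributes trivially and can be deleted; this reduces to the $n=1$ upper-triangular observation you cite.
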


\section{$\SLE_4$ martingales via $\tau$-functions}

Consider a chordal $\SLE_\kappa$ in the upper half-plane $\H$; let $(g_t)_t$ be the $\SLE$ flow (extended by reflection: $g_t(\bar z)=\overline{g_t(z)}$), $\gamma_t$ the tip, $Z_t=g_t(\gamma_t)$, $\lambda_i$ a marked point in $\C$, $\Lambda^i_t=g_t(\lambda_i)$, so that:
\begin{align*}
dZ_t&=\sqrt\kappa dB_t\\
d\Lambda^i_t&=\frac2{\Lambda^i_t-Z_t}dt\\
\frac{d(\Lambda^i_t-\Lambda^j_t)}{\Lambda^i_t-\Lambda^j_t}&=-\frac{2}{(Z_t-\Lambda^i_t)(Z_t-\Lambda^j_t)}dt\\
\frac{dg'_t(\lambda_i)}{g'_t(\lambda_i)}&=-\frac 2{(\Lambda^i_t-Z_t)^2}dt
\end{align*}
Let $M_t=\prod_i (Z_t-\Lambda^i_t)^{\alpha_i}\prod_ig_t'(\lambda_i)^{\beta_i}\prod_{i<j}(\Lambda^i_t-\Lambda^j_t)^{\gamma_{ij}}$. Then:
\begin{align*}
\frac{dM_t}{M_t}=&\left(\sum_i\frac{\alpha_i}{Z_t-\Lambda^i_t}\left(\sqrt\kappa dB_t-\frac{2dt}{\Lambda^i_t-Z_t}\right)\right)+\frac\kappa 2\sum_i\frac{\alpha_i(\alpha_i-1)}{(Z_t-\Lambda^i_t)^2}dt
+\kappa \sum_{i<j}\frac{\alpha_i\alpha_j}{(Z_t-\Lambda^i_t)(Z_t-\Lambda^j_t)}dt\\
&-\sum_i\frac{2\beta_i}{(\Lambda^i_t-Z_t)^2}dt-\sum_{i<j}\frac{2\gamma_{ij}}{(Z_t-\Lambda^i_t)(Z_t-\Lambda^j_t)}dt
\end{align*}
so that $M$ is a (local) martingale provided: 
$$\beta_i=\frac\kappa 4\alpha^2_i+(1-\frac\kappa 4)\alpha_i,{\rm\ }\gamma_{ij}=\frac\kappa2\alpha_i\alpha_j$$
This may be specialized to the case: $\kappa=4$, $\lambda_{2j+1}\in\H$, $\lambda_{2j+2}=\overline{\lambda_{2j+1}}$, $\alpha_{2j+2}=-\alpha_{2j+1}$, in which case $M$ is a natural regularization of the electric correlator $\langle :\prod\exp(i\sum_j\alpha_j\phi(\lambda_{2j+1})):\rangle$, where $\phi$ is the (properly normalized) corresponding free field (see eg the discussion in \cite{Dub_tors}).

Let us now consider an $\SL_2$ (rather than $\GL_1$) version of this. As before we consider a representation $\rho:\pi_1(\H\setminus\{\lambda_1,\dots,\lambda_n\})\rightarrow \SL_2(\C)$ (with no additional condition for now) and $\tau$ the associated $\tau$-function (well defined up to multiplicative constant locally on the configuration space).

\begin{Lem}\label{Lem:locmart}
Let $\lambda_1,\dots,\lambda_n$ be punctures in $\C\setminus\{0\}$, $\rho:\pi_1(\C\setminus\{\lambda_1,\dots,\lambda_n\})\rightarrow \SL_2(\C)$ a representation.
\begin{enumerate}
\item If $(g_t)_t$ is a chordal $\SLE_4$ in $(\H,0,\infty)$, then under \eqref{eq:Schlesinger}, \eqref{eq:Fuchstot}, \eqref{eq:dlogtau},
$$M_t=\left(\prod_j g'_t(\lambda_j)^{\Tr(A_j^2)/2}\right)\tau(\Lambda_t)Y_0(Z_t;\Lambda_t)$$ 
is a (matrix-valued) local martingale.
\item  If $(g_t)_t$ is a chordal $\SLE_4(-2)$ in $(\H,z,w,\infty)$, then under \eqref{eq:Schlesinger}, \eqref{eq:Fuchstot}, \eqref{eq:dlogtau},
$$M_t=\left(\prod_j g'_t(\lambda_j)^{\Tr(A_j^2)/2}\right)\tau(\Lambda_t)\Tr(Y_0(W_t;\Lambda_t)^{-1}Y_0(Z_t;\Lambda_t))$$ 
is a (scalar) local martingale up to $\tau=\min_i\inf\{t\geq 0:\Lambda^i_t=W_t\}$.
\end{enumerate}
\end{Lem}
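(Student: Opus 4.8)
The plan is to apply It\^o's formula to $M_t$ and check that its finite-variation (drift) part vanishes. Two preliminaries. First, the eigenvalues of each $A_j$ are deformation invariants of the Schlesinger flow \eqref{eq:Schlesinger}, so $\Tr(A_j^2)$ is constant along the flow and the factor $\prod_jg'_t(\lambda_j)^{\Tr(A_j^2)/2}$ carries no hidden It\^o term from a varying exponent. Second, since the Loewner chain is driven by $Z_t$, the processes $\Lambda^i_t=g_t(\lambda_i)$, $g'_t(\lambda_i)$ and $\tau(\Lambda_t)$ solve ODEs driven by $Z$ and are thus of finite variation, whereas $Y_0(Z_t;\Lambda_t)$ is a genuine semimartingale with $d\langle Z\rangle_t=\kappa\,dt$. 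I will use the component form of \eqref{eq:Fuchstot}: with $A(z)=\sum_j\frac{A_j}{z-\lambda_j}$ one has $\partial_zY_0=A(z)Y_0$ and $\partial_{\lambda_j}Y_0=-\frac{A_j}{z-\lambda_j}Y_0$, hence $\partial_z^2Y_0=(A'(z)+A(z)^2)Y_0$; and the $\SL_2$ Cayley--Hamilton identity $X^2=\tfrac12\Tr(X^2)\,\Id$, valid for any traceless $2\times2$ matrix $X$ (in particular $X=A(z)$).

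For part (1): using the relations displayed just before the statement one gets $\frac{d}{dt}\log\prod_jg'_t(\lambda_j)^{\Tr(A_j^2)/2}=-\sum_j\frac{\Tr(A_j^2)}{(Z_t-\Lambda^j_t)^2}$ and $\frac{d}{dt}\log\tau(\Lambda_t)=-\sum_{i\neq j}\frac{\Tr(A_iA_j)}{(Z_t-\Lambda^i_t)(Z_t-\Lambda^j_t)}$, whose sum is exactly $-\Tr(A(Z_t)^2)$. It\^o's formula on $t\mapsto Y_0(Z_t;\Lambda_t)$ produces the local martingale $A(Z_t)Y_0\,dZ_t$ plus the drift $\big(\sum_j\frac{2A_j}{(Z_t-\Lambda^j_t)^2}-\frac{\kappa}{2}\sum_j\frac{A_j}{(Z_t-\Lambda^j_t)^2}+\frac{\kappa}{2}A(Z_t)^2\big)Y_0\,dt$. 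Assembling $M_t$ by the product rule (the finite-variation factors contribute no cross-variation), the terms linear in the $A_j$'s carry the prefactor $2-\kappa/2$ and vanish precisely at $\kappa=4$; at $\kappa=4$ the quadratic term equals $2A(Z_t)^2Y_0=\Tr(A(Z_t)^2)\,Y_0$ by Cayley--Hamilton, which cancels the $-\Tr(A(Z_t)^2)\,Y_0$ coming from the $\tau$- and $g'$-factors. Hence $M_t$ is a local martingale.

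For part (2): I would run the same computation on $F_t=\Tr\Phi(Z_t,W_t;\Lambda_t)$, $\Phi(z,w)=Y_0(w)^{-1}Y_0(z)$, now with the extra finite-variation argument $W_t$ (since $dW_t=\frac{2}{W_t-Z_t}dt$ has no Brownian part there is no new It\^o correction) and with $dZ_t=\sqrt\kappa\,dB_t+\frac{\rho}{Z_t-W_t}dt$, $\rho=-2$, the $\Lambda^i_t$ and $g'_t(\lambda_i)$ still obeying the same Loewner-derived equations. From $\partial_z\Phi=Y_0(w)^{-1}A(z)Y_0(z)$, $\partial_w\Phi=-Y_0(w)^{-1}A(w)Y_0(z)$, $\partial_{\lambda_j}\Phi=Y_0(w)^{-1}A_j\big(\frac1{w-\lambda_j}-\frac1{z-\lambda_j}\big)Y_0(z)$, $\partial_z^2\Phi=Y_0(w)^{-1}(A'(z)+A(z)^2)Y_0(z)$ and $A(z)-A(w)=-(z-w)\sum_j\frac{A_j}{(z-\lambda_j)(w-\lambda_j)}$, one checks after tracing that the contribution $\propto\Tr(A(Z_t)^2)\,\Tr\Phi$ produced by the $\kappa\,dt$ term (again via Cayley--Hamilton) cancels the $\tau$- and $g'$-contributions exactly as before, and that every remaining drift term is a scalar multiple of $\Tr\big(Y_0(W_t)^{-1}A_jY_0(Z_t)\big)$ with scalar $\frac1{(Z_t-\Lambda^j_t)(W_t-\Lambda^j_t)}-\frac{Z_t-W_t}{(Z_t-\Lambda^j_t)^2(W_t-\Lambda^j_t)}-\frac1{(Z_t-\Lambda^j_t)^2}=0$, the vanishing being the partial-fraction identity $(Z-\Lambda^j)-(Z-W)-(W-\Lambda^j)=0$. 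Thus the drift vanishes; all these manipulations are valid on $[0,\tau)$, where $\Phi(Z_t,W_t;\Lambda_t)$ is regular --- $W_t\neq\Lambda^i_t$ there by definition of $\tau$, and $Z_t\in\R$ keeps $Z_t$ off the punctures --- so $M_t$ is a local martingale up to $\tau$.

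The manipulations above are routine It\^o bookkeeping; the real content --- and the only place where genuine care is needed --- is to see \emph{why} the drift cancels: the value $\kappa=4$ is exactly what kills the terms linear in the $A_j$'s, while the $\SL_2$ Cayley--Hamilton identity is exactly what turns the quadratic-in-$A$ terms into $-d\log\big(\tau\prod_jg'_t(\lambda_j)^{\Tr(A_j^2)/2}\big)$. In part (2) one must additionally notice the partial-fraction cancellation and confirm that $Y_0$ is never evaluated at a singularity before $\tau$.
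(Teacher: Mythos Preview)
Your argument for part (1) is correct and essentially identical to the paper's: the same It\^o computation, the same $\mathfrak{sl}_2$ identity (which the paper states in polarized form $A_iA_j+A_jA_i=\Tr(A_iA_j)\Id_2$, equivalent to your Cayley--Hamilton), and the same two-way cancellation at $\kappa=4$.

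Your argument for part (2) has a genuine gap. You write that ``$dW_t=\frac{2}{W_t-Z_t}\,dt$ has no Brownian part'' and treat $W_t$ as a finite-variation process and $Z_t$ as an ordinary semimartingale with drift $\frac{-2}{Z_t-W_t}\,dt$. For $\SLE_4(-2)$ the Bessel dimension is $\delta=1+\frac{2(\rho+2)}{\kappa}=1$, so $|Z_t-W_t|/\sqrt\kappa$ is a reflecting Brownian motion: it hits $0$ repeatedly, the integral $\int_0^t(Z_s-W_s)^{-1}\,ds$ diverges, and $Z_t,W_t$ are not semimartingales but Dirichlet processes (local martingale plus zero-energy part involving local time). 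Your drift computation and the partial-fraction cancellation are valid on each excursion interval $\{Z\neq W\}$, but that only yields a local martingale up to the \emph{first} collision of $Z$ with $W$, not up to $\tau=\min_i\inf\{t:\Lambda^i_t=W_t\}$. The collisions $Z_t=W_t$ occur well before $\tau$ (indeed immediately, if one starts from $(0,0^+)$ as in the CLE exploration), and crossing them is exactly the non-routine step.

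The paper handles this by invoking an It\^o formula for Dirichlet processes (Lemma~\ref{Lem:Ito}), which in the $\delta=1$ case imposes the extra boundary condition $\partial_z\phi=\partial_w\phi=0$ at $z=w$ to absorb the local-time contribution. One must then verify that $\phi(z,w)=\Tr\!\big(Y_0(w)^{-1}Y_0(z)\big)$ satisfies this; it does because $\partial_z\Tr(Y_0(w)^{-1}Y_0(z))\big|_{z=w}=\Tr(A(w))=0$ by tracelessness, and similarly for $\partial_w$. This check is short but essential, and it is precisely what your sentence ``all these manipulations are valid on $[0,\tau)$'' skips over: you verified regularity of $Y_0$ at the punctures, but not the far more delicate regularity of the stochastic calculus at $Z=W$.
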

The determination issues (multivaluedness of $Y_0$) are resolved by picking a version which is continuous under the $\SLE$ flow.
\begin{proof}
\begin{enumerate}
\item
For generic monodromy data, this is associated to a system \eqref{eq:Fuchstot}
$$dY(z;\lambda)=\sum_j A_jY\frac{d(z-\lambda_j)}{(z-\lambda_j)}$$
where we now regard $Y$ as a function of $z$ {\em and} the punctures $(\lambda_1,\dots)$, with normalization: $Y_0(\infty;\lambda)=\Id_2$. As is well-known, this system is completely integrable provided the $A_j$'s satisfy the Schlesinger equations; its monodromy is then locally constant. Consequently,
$$dY_0(Z_t;\Lambda_t)=\sum_j \frac{A_j}{Z_t-\Lambda^j_t}\left(\sqrt\kappa dB_t-\frac{2dt}{\Lambda^j_t-Z_t}\right)Y_0+\frac\kappa2\left(-\sum_j\frac{A_j}{(Z_t-\Lambda_t^j)^2}+\left(\sum_j\frac{A_j}{Z_t-\Lambda^j_t}\right)^2\right)Y_0dt$$
With the $A_j$'s in ${\mf sl}_2$ (traceless), we have: $A_j^2=\Tr(A_j)A_j-\det(A_j)\Id_2=-\det(A_j)\Id_2=\frac 12\Tr(A_j^2)\Id_2$, and by polarization:
$$A_jA_i+A_iA_j=\Tr(A_iA_j)\Id_2$$
Hence if $\kappa=4$, we have
$$dY_0(Z_t;\Lambda_t)=\left(\sum_j \frac{A_j}{Z_t-\Lambda^j_t}\right)Y_0\sqrt\kappa dB_t+2\left(\sum_j\frac{\Tr(A^2_j)}{2(Z_t-\Lambda_t^j)^2}+\left(\sum_{i<j}\frac{\Tr(A_iA_j)}{(Z_t-\Lambda^i_t)(Z_t-\Lambda^j_t)}\right)\right)Y_0dt$$
Let $\tau$ be the associated $\tau$-function, given by \ref{eq:dlogtau}, so that
$$d\log\tau(\Lambda_t)=-\sum_{i\neq j}\frac{\Tr(A_iA_j)}{(Z_t-\Lambda^i_t)(Z_t-\Lambda^j_t)}dt$$
Consequently, if
$$M_t=\left(\prod_j g'_t(\lambda_j)^{\Tr(A_j^2)/2}\right)\tau(\Lambda_t)Y_0(Z_t;\Lambda_t)$$
then $M$ is a (matrix-valued) local martingale. 

\item To make covariance clearer, we work with a chordal $\SLE_4(-2)$ with seed at $z$ and force point at $w$, $z\neq w\in\R$. Up to time change and first hitting time of $w$, this is a chordal $\SLE_4$ from $z$ to $w$.

If $W_t=g_t(w)$, then 
\begin{align*}
dZ_t&=2dB_t+\frac{2}{W_t-Z_t}dt\\
dW_t&=\frac{2}{W_t-Z_t}dt\\
dY_0(W_t;\Lambda_t)^{-1}&=Y_0(W_t;\Lambda_t)^{-1}\sum_j A_j \frac{2dt}{(W_t-Z_t)(\Lambda^j_t-Z_t)}
\end{align*}
Consider $N_t=Y_0^{-1}(W_t;\Lambda_t)Y_0(Z_t;\Lambda_t)$. Then
\begin{align*}
dN_t&=Y_0(W_t)^{-1}\left(
\sum_j \frac{A_j}{Z_t-\Lambda^j_t}\left(\sqrt\kappa dB_t+\frac{2dt}{W_t-Z_t}-\frac{2dt}{\Lambda^j_t-Z_t}\right)+\sum_j A_j \frac{2dt}{(W_t-Z_t)(\Lambda^j_t-Z_t)}\right)Y_0(Z_t)\\
&\hphantom{=}+Y_0(W_t)^{-1}\frac\kappa 2\left(-\sum_j\frac{A_j}{(Z_t-\Lambda_t^j)^2}+\left(\sum_j\frac{A_j}{Z_t-\Lambda^j_t}\right)^2\right)Y_0(Z_t)dt\\
&=Y_0(W_t)^{-1}\left(\sum_j \frac{A_j}{Z_t-\Lambda^j_t}2 dB_t+2
\left(\sum_j\frac{A_j}{Z_t-\Lambda^j_t}\right)^2dt\right)Y_0(Z_t)
\end{align*}
With the same cancellations as above, we see that:
$$M_t=\left(\prod_j g'_t(\lambda_j)^{\Tr(A_j^2)/2}\right)\tau(\Lambda_t)Y_0(W_t;\Lambda_t)^{-1}Y_0(Z_t;\Lambda_t)$$
is a local martingale up to the first hitting time of $z=w$. By Lemma \ref{Lem:Ito}, in order for $M$ to be a local martingale up to $\tau$, we need the additional condition:
$$\partial_w\Tr(Y_0(w)^{-1}Y_0(z))=\partial_z\Tr(Y_0(w)^{-1}Y_0(z))=0$$
at $z=w$. This follows immediately from
$$\partial_zY_0(z)=A(z)Y_0(z)$$
with $A$ traceless. (Remark that at time $\tau$, $Y_0(Z_t)\neq Y_0(W_t)$ in general, since they differ by the monodromy of the loop created by the trace, which may now encircle some punctures).
\end{enumerate}
\end{proof}

If the $\lambda_j$'s are paired by reflection, this should correspond to double-dimer $\SL_2(\C)$ observables with a chordal path from $z$ to $w$, with the following natural interpretations: $\tau$ the partition function with punctures and no chordal path, $1/(z-w)$ the partition function with a chord and no punctures, and $\tau S(z,w)$ the partition function with punctures and a chord.

\begin{Lem}\label{Lem:UI}
With the notations of Lemma \ref{Lem:locmart}, if furthermore $\rho$ is close enough to the trivial representation and with unipotent local monodromies, then $\sup_t|M_t|$ is integrable, and {\em a fortiori} $M$ is a uniformly integrable martingale.
\end{Lem}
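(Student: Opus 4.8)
The plan is to dominate $\sup_{t\ge 0}|M_t|$ by an integrable random variable; a local martingale so dominated is automatically a true, uniformly integrable martingale, which is the assertion. The key simplification is that unipotency kills the derivative prefactor: by Lemma~\ref{Lem:fuchsbound} the residues $A_j$ are nilpotent, so $\Tr(A_j^2)=-2\det(A_j)=0$ and $\prod_j g'_t(\lambda_j)^{\Tr(A_j^2)/2}\equiv 1$, whence $M_t=\tau(\Lambda_t)\,Y_0(Z_t;\Lambda_t)$ (resp. $M_t=\tau(\Lambda_t)\,\Tr(Y_0(W_t;\Lambda_t)^{-1}Y_0(Z_t;\Lambda_t))$). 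Using conformal invariance of the ($\Lambda_\R$-normalized) $\tau$-function (Lemma~\ref{Lem:taumoeb} and the discussion after it) together with the compatible covariance of the fundamental solution — normalized by $Y_0(\infty)=\Id_2$, with $g_t(\infty)=\infty$ — one rewrites these intrinsically: with $g_t:\H\setminus K_t\to\H$,
$$\tau(\Lambda_t)=\tau\big((\H\setminus K_t)\setminus\{\lambda_1,\dots,\lambda_n\};\rho\big),\qquad Y_0(Z_t;\Lambda_t)=Y_0\big(\gamma_t;(\H\setminus K_t)\setminus\{\lambda_\bullet\};\rho\big),$$
the latter the fundamental solution of the Fuchsian problem attached to $(\H\setminus K_t,\{\lambda_j\},\rho)$ evaluated at the boundary point $\gamma_t$. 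So it suffices to bound, uniformly over the slit domains arising along the flow, the $\tau$-function and this boundary value of $Y_0$.

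The core input is a domain-uniform a priori estimate: if $\rho$ has unipotent local monodromies within $\eps$ of the trivial one, then for every simply-connected $D$ with punctures $\mu_\bullet$ and every boundary point $\xi$,
$$\big|\tau(D\setminus\{\mu_\bullet\};\rho)\big|\le e^{O(\eps^2)\,\Phi},\qquad \big|Y_0(\xi;D\setminus\{\mu_\bullet\};\rho)-\Id_2\big|\le O(\eps)\,(1+\Psi),$$
with $\Phi,\Psi$ finite sums of terms $\big(-\log d\big)^+$, $d$ running over conformally natural separations in $D$ of pairs of punctures, of a puncture from $\partial D$, and of $\xi$ from a puncture. By conformal invariance this is checked on $D=\H$: the Riemann--Hilbert characterization of Section~\ref{Sec:tauHP} gives $Y_0=\Id_2+O(\eps)$ away from the punctures, with an extra $O(\eps)$ times a logarithm of the reciprocal distance to the nearest puncture as $\xi$ approaches one (recall $Y_0$ is only $O(|\log(z-\lambda_i)|)$ there, the $A_j$ being nilpotent); and for $\tau$ one argues by induction on $n$ via the pinching Lemma~\ref{Lem:pinch} — base case $n\le 1$ being $\tau\equiv 1$, Corollary~\ref{Cor:tau1} — the only potentially divergent contributions to $d\log\tau$ being the terms $\Tr(A_iA_j)\,d\log(\lambda_i-\lambda_j)$ with $|\Tr(A_iA_j)|=O(\eps^2)$, whose behavior near coincident punctures and near the boundary is precisely that controlled in Lemma~\ref{Lem:taubound}.

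Specializing $D=\H\setminus K_t$ and $\xi=\gamma_t$ (and, in the scalar case, tracking also the image of the force point) gives $\sup_t|M_t|\le\exp(O(\eps^2)\,\Phi_\infty)\,(1+O(\eps)\,\Psi_\infty)$, where $\Phi_\infty,\Psi_\infty$ are finite sums of $\sup_{t\ge0}\big(-\log r(t)\big)^+$ for $r(t)$ a conformal radius $\mathrm{crad}(\lambda_j,\H\setminus K_t)$, a conformal separation in $\H\setminus K_t$ of two punctures, or a conformal separation of $\gamma_t$ from a puncture. Each such $\sup_t\big(-\log r(t)\big)^+$ has finite exponential moments for chordal $\SLE_4$, uniformly in the puncture configuration: for the conformal radii this is the one-point estimate (the trace comes conformally $\delta$-close to a fixed bulk point with probability $O(\delta^c)$), and the other two follow from the corresponding annular-crossing/squeezing estimates for $\SLE_4$ relegated to Section~5. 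Hence, for $\eps$ small enough, $\E\big[\exp(O(\eps^2)\Phi_\infty)(1+O(\eps)\Psi_\infty)\big]<\infty$, which dominates $\sup_t|M_t|$ and gives the statement; the scalar case is identical up to the stopping time $\tau$, using in addition that the image of the force point is a boundary point which stays at positive distance from the interior marked points $\Lambda^i_t$ before $\tau$.

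The main obstacle is this last package of $\SLE_4$ a priori estimates together with matching them to the $\tau$-function asymptotics: the one-point ``approach a bulk point'' bound is routine, but controlling, with exponential moments, the time-supremum of the logarithmic blow-up produced when the trace pinches two punctures into a thin neck of $\H\setminus K_t$ — equivalently when their conformal separation in $\H\setminus K_t$ becomes small — requires genuine crossing estimates, and lining up their decay exponent with the power of the distance appearing in $\tau$ near coincident punctures (Lemma~\ref{Lem:taubound}) is the delicate point.
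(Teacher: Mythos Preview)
Your approach diverges from the paper's and has a genuine gap in the control of $Y_0$.

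For $\tau$, the paper does not use a random bound at all: it invokes Lemma~\ref{Lem:tauboundunif}, which gives a \emph{deterministic} bound $|\log\tau(\Lambda_t;\rho)|\le M$ for all $t$. The mechanism is that $\tau$ depends only on the conjugacy class of $\rho$ (Lemma~\ref{Lem:tauchoice}), and the ``$\eps$-separation'' of cuts used in Lemma~\ref{Lem:tauboundunif} is monotone under shrinking the domain, so the original cuts restricted to $\H\setminus K_t$ still qualify and $|\rho|$ measured with respect to them is unchanged. Your bound $|\tau|\le e^{O(\eps^2)\Phi}$ with random $\Phi$ is both weaker and essentially unproved (``induction on $n$ via pinching'' is not an argument).

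For $Y_0$, your claimed estimate $|Y_0(\xi)-\Id_2|\le O(\eps)(1+\Psi)$ hides the real issue. The object $Y_0(Z_t;\Lambda_t)$ is defined by continuity in $t$, i.e.\ by solving \eqref{eq:Fuchstot} along the path $t\mapsto(Z_t,\Lambda_t)$ with the $A_i(t)$ evolving by Schlesinger~\eqref{eq:Schlesinger}. Your ``intrinsic'' rewriting $Y_0(\gamma_t;(\H\setminus K_t)\setminus\{\lambda_\bullet\};\rho)$ is only well-defined once one fixes generators of $\pi_1((\H\setminus K_t)\setminus\{\lambda_\bullet\})$, and your bound implicitly assumes the local monodromies with respect to the ``short'' generators at time $t$ are still $\Id_2+O(\eps)$. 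That is exactly what fails: as the trace threads between the punctures, the natural generators $\gamma_i'$ become conjugates of the original $\gamma_i$ by words whose length reflects the winding, so $\rho(\gamma_i')$ --- and hence the residues $A_i$ in the gauge you are using, and hence $Y_0$ --- can be large. None of your conformal-separation quantities $\Psi$ sees this; a trace that oscillates many times between crosscuts well below the punctures keeps all separations of order one while producing long words. The paper confronts this directly: it introduces $N$, the number of times the trace travels between two distinct crosscuts, argues that the gauge change from deformed to short crosscuts conjugates by a matrix $P=O((1+\eps)^N)$ and hence $Y_0(Z_t;\Lambda_t)=O((1+\eps')^N)$, and then shows $N$ has exponential tails via a pigeonhole argument combined with the annular crossing estimate of \cite{Werness_cross} for chordal $\SLE_4$ (and Lemma~\ref{Lem:CLEarm} for the $\SLE_4(-2)$ case, using that its trace sits inside a $\CLE_4$). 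Taking $\eps$ small enough then makes $\sup_t(1+\eps')^N$ integrable. The crossing count $N$ is the correct currency here; replacing it by static conformal data does not work without a separate argument you have not supplied.
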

\begin{proof}
By Lemma \ref{Lem:tauboundunif}, by taking $\rho$ close enough to the identity we have $t\mapsto|\log\tau(\Lambda_t;\rho)|$ bounded (for all times, by a deterministic constant).

For $Y_0(Z_t;\Lambda_t)$, the main difficulty is that is evaluated along the $\SLE$ trace $\gamma$, which may itself wind many times around the punctures. Draw crosscuts from the punctures $\Lambda_0$ to $\R$ at pairwise positive distance and set $N$ to be the number of times the trace $\gamma$ travels between two distinct crosscuts. Divide the crosscuts in $O(n/q)$ segments of diameter $O(q/n)$ ($q$ a large integer). By the pigeonhole principle, if $N\geq n$, one of these contains $q$ macroscopic, disjoint segments of the trace. By the annular crossing estimate of \cite{Werness_cross} (Theorem 5.7) - and the trivial union bound - this gives:
$$\P(N\geq n)\leq c_1\frac{n}q\left(c_2q/n\right)^{\beta q}$$
for some positive constants $\beta,c_1,c_2$. By taking $q=\lfloor\eps n\rfloor$, $0<\eps<c_2^{-1}$, this shows that $N$ has exponential tail.

In the set-up of Lemma \ref{Lem:locmart} (2), one observes that the $\SLE_4(-2)$ trace is contained in a $\CLE_4$ and one can use the annular crossing estimate of Lemma \ref{Lem:CLEarm} instead.

In order to evaluate $Y_0(Z_t;\Lambda_t)$ (which is continuous in $t$), we progressively deform the original crosscuts (so that they stay disjoint of the trace and each other). When replacing these crosscuts by ``shorter" ones (that do not wind around each other), we have to conjugate the $A_i$'s by a matrix $P$ s.t. $P=O((1+\eps)^N)$, where $\eps>0$ is arbitrarily small provided that $\rho$ is close enough to the trivial representation (see Lemma \ref{Lem:tauboundunif} for a similar argument).

Then we consider 
$Y_0(z,\Lambda)$ obtained by solving \eqref{eq:Fuchs} along the boundary, which leads to an estimate $Y_0(Z_t;\Lambda_t)=O((1+\eps')^N)$, with $\eps'$ arbitrarily small for $\rho$ close enough to the trivial representation. Since $N$ itself has exponential tail, this gives the desired result.
\end{proof}

\begin{Lem}\label{Lem:oneloop}
Consider the punctured half-plane $\H\setminus\{\lambda_1,\dots,\lambda_n\}$ and $\rho:\pi_1(\H\setminus\{\lambda_1,\dots,\lambda_n\})\rightarrow\SL_2(\R)$ a representation with unipotent local monodromies close enough to the trivial representation. 
\begin{enumerate}
\item
Let $\gamma$ be a chordal $\SLE_4$ trace from $0$ to $\infty$ in $\H$, $\H^+$ and $\H^-$ the connected components of $\H\setminus\gamma$ (resp. to the right and to the left of $\gamma$). Then
$$Y_0(0)\tau(\H\setminus\Lambda,\rho)=\E(\rho(\partial\H^+)\tau(\H^+\setminus\Lambda,\rho_+)\tau(\H^-\setminus\Lambda,\rho_-))$$
\item Let $\gamma$ be a chordal $\SLE_4(-2)$ started at $(0,0^+)$ and stopped at the first time $\tau$ when it completes a non-trivial loop $\delta$ w.r.t. the punctures. Then
$$\tau(\H\setminus\Lambda,\rho)=\E\left(\frac{\Tr(\rho(\delta))}2\tau(\H\setminus\gamma_\tau\setminus\Lambda,\rho)\right)$$
\end{enumerate}
\end{Lem}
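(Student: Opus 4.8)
The plan is to derive both identities from the uniformly integrable martingales of Lemmas~\ref{Lem:locmart} and~\ref{Lem:UI} by optional stopping (at time $\infty$ in Part 1, at the loop-closing time in Part 2), identifying the terminal value by means of the pinching behaviour of the $\tau$-function (Lemma~\ref{Lem:pinch}), its conformal invariance (Lemma~\ref{Lem:taumoeb}) and the triviality of the one-puncture $\tau$-function (Corollary~\ref{Cor:tau1}). Note first that, with the solution of \eqref{eq:Schlesinger} normalized as in Section~\ref{Sec:tauHP}, the $A_j$'s are nilpotent, so $\Tr(A_j^2)=0$ and the prefactor $\prod_j g_t'(\lambda_j)^{\Tr(A_j^2)/2}$ appearing in Lemma~\ref{Lem:locmart} is identically $1$; moreover we apply that lemma to the reflected configuration (the $2n$ punctures $\{\lambda_i\}\cup\{\overline{\lambda_i}\}$ and the representation $\tilde\rho$), so that the scalar $\tau(\Lambda_t)$ occurring there coincides, after transport by the uniformizing map $g_t$, with the half-plane $\tau$-function $\tau((\H\setminus K_t)\setminus\Lambda,\rho)$.

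For Part (1), $M_t=\tau(\Lambda_t)\,Y_0(Z_t;\Lambda_t)$ is a matrix-valued uniformly integrable martingale, hence $M_0=\E(\lim_{t\to\infty}M_t)$ with the limit existing in $L^1$ and a.s., for the determination of $Y_0$ chosen continuous along the flow. Since $Z_0=0$ and $g_0=\Id$, we have $M_0=Y_0(0)\,\tau(\H\setminus\Lambda,\rho)$. As $t\to\infty$ the hulls $K_t=\gamma_{[0,t]}$ increase to the simple trace $\gamma$ from $0$ to $\infty$, so $\H\setminus K_t$ decreases to $\H\setminus\gamma=\H^+\sqcup\H^-$ with the punctures split between the two sides; after a Moebius change bringing the pinch point (located near the escaping tip $\gamma_t\to\infty$) to a finite point and invoking Lemma~\ref{Lem:taumoeb}, Lemma~\ref{Lem:pinch} gives $\tau(\Lambda_t)\to\tau(\H^+\setminus\Lambda,\rho_+)\,\tau(\H^-\setminus\Lambda,\rho_-)$. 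Finally, the analytic continuation of $Y_0$ from $\infty$ to $Z_t=g_t(\gamma_t)$ can be taken along one side of the image of the yet-undrawn part of the curve, and in the pinching limit this path turns into a loop encircling precisely the punctures of $\H^+$, so that $Y_0(Z_t;\Lambda_t)\to\rho(\partial\H^+)$. Optional stopping then gives the stated identity.

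For Part (2), $M_t=\tau(\Lambda_t)\,\Tr\!\big(Y_0(W_t;\Lambda_t)^{-1}Y_0(Z_t;\Lambda_t)\big)$ is a scalar local martingale up to $\tau=\min_i\inf\{t:\Lambda_t^i=W_t\}$ by Lemma~\ref{Lem:locmart}(2), uniformly integrable by Lemma~\ref{Lem:UI}; one checks that $\tau$ is exactly the first time the $\SLE_4(-2)$ trace closes a non-trivial loop $\delta$, because a puncture image meets the force-point image precisely when the trace pinches off that puncture into a loop, the trace being continued through the earlier (trivial) loop-closings as in Section~\ref{Sec:kapparho} without any $\Lambda_t^i$ reaching $W_t$. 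Since $Z_0=W_0=0$, $M_0=2\,\tau(\H\setminus\Lambda,\rho)$. As $t\uparrow\tau$ the punctures enclosed by $\delta$ cluster at $W_\tau=Z_\tau$, so by the Schlesinger-asymptotics argument of Lemma~\ref{Lem:pinch} the factor $\tau(\Lambda_t)$ splits into the $\tau$ of the images of the non-enclosed punctures times the $\tau$ of the cluster, the latter converging conformally to $\mathrm{int}(\delta)$ punctured at the enclosed $\lambda_j$'s, while the interiors of the trivial loops closed before $\tau$ contribute $\tau=1$; as $\tau$-functions of disjoint domains multiply, the limit is $\tau(\H\setminus\gamma_\tau\setminus\Lambda,\rho)$. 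Meanwhile $Y_0(Z_\tau)$ and $Y_0(W_\tau)$ differ by the monodromy of the loop $\delta$ just completed (Remark after Lemma~\ref{Lem:locmart}), and since the trace of an $\SL_2$ matrix is invariant under conjugation and inversion, $\Tr\!\big(Y_0(W_\tau)^{-1}Y_0(Z_\tau)\big)=\Tr(\rho(\delta))$. Optional stopping yields $2\,\tau(\H\setminus\Lambda,\rho)=\E\big(\Tr(\rho(\delta))\,\tau(\H\setminus\gamma_\tau\setminus\Lambda,\rho)\big)$, i.e.\ the claim.

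The stochastic-calculus input being already packaged in Lemmas~\ref{Lem:locmart} and~\ref{Lem:UI}, the main obstacle is the pathwise identification of the terminal value. Two points need care: first, applying Lemma~\ref{Lem:pinch} in the degenerate regimes occurring here --- a pinch at $\infty$ in Part 1, a cluster of punctures collapsing onto a boundary point in Part 2 --- which requires reducing to a finite pinch by a conformal map and using the uniform $\tau$-bounds to guarantee the factorization and rule out leftover contributions; second, tracking the analytic continuation of $Y_0$ along the $\SLE$ flow so as to pin down the homotopy class of the limiting loop (namely $\partial\H^+$ in Part 1, $\delta$ in Part 2), i.e.\ checking that the determination ``continuous along the flow'' degenerates as expected --- a bookkeeping matter with the branch cuts. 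A minor additional verification is that the first-non-trivial-loop time indeed coincides with the stopping time of Lemma~\ref{Lem:locmart}(2) and that the $\SLE_4(-2)$ is well-defined through the trivial loop closings.
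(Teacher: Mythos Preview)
Your proposal is correct and follows essentially the same approach as the paper: invoke the UI martingales of Lemmas~\ref{Lem:locmart} and~\ref{Lem:UI}, apply optional stopping, and identify the terminal value via Lemma~\ref{Lem:pinch} together with monodromy tracking (with the $\Tr(A_j^2)=0$ simplification you note). The main differences are in emphasis: you are more explicit than the paper about the stopping-time identification in Part~2 and about the factorization through trivial loop closings, while the paper is more concrete in pinning down the limits of $Y_0$ --- in Part~1 it passes to coordinates where the two puncture clusters are separated by a large real translation $M_t\to\infty$ with $1\ll z_t\ll M_t\ll w_t$ and bounds the variation of $Y_0$ along $[z_t,z_t+i\infty)$ and $[w_t,\infty)$ directly, and in Part~2 it introduces the last pre-$\tau$ collision time $\sigma$ (where $Y_0(W_\sigma)=Y_0(Z_\sigma)$) to make the identification $\Tr(Y_0(W_t)^{-1}Y_0(Z_t))\to\Tr(\rho(\delta))$ transparent.
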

Here $\partial\H^+$ is the clockwise oriented boundary of $\H^+$ and $Y_0(0)$ is obtained by solving \eqref{eq:Fuchs} from $\infty$ to $0$ along the positive half-line. 
\begin{proof}
\begin{enumerate}
\item
From Lemmas \ref{Lem:locmart}, \ref{Lem:UI}, we have 
$$M_0=\E(\lim_{t\rightarrow\infty}M_t)$$
From Lemma \ref{Lem:pinch} we have 
$$\lim_{t\rightarrow\infty}\tau(\H\setminus\gamma_{[0,t]}\setminus\Lambda,\rho)=\tau(\H^+,\rho_+)\tau(\H^-,\rho_-)$$
We are left with evaluating $\lim Y_0(Z_t;\Lambda_t)$. As in Lemma \ref{Lem:pinch} we assume up to relabelling that $\lambda_1,\dots,\lambda_k$ (resp. $\lambda_{k+1},\dots,\lambda_n$) are in $\H^-$ (resp. $\H^+$). When $t$ is large, up to homography we may assume $\Re(\Lambda^i_t)=O(1)$ for $i\leq k$, $\Re(\Lambda^i_t-M_t)=O(1)$ for $1\leq i\leq n$ with $M_t\rightarrow\infty$; and $1\ll z_t\ll M_t\ll w_t$ where $z_t,w_t$ are the endpoints of the $\SLE$ in these coordinates (this follows eg from simple harmonic measure considerations). In this set-up, the variation of $Y_0$ along the imaginary half-line $[z_t,z_t+i\infty)$ and the real half-line $[w_t,\infty)$  is $o(1)$ and consequently $Y_0(Z_t,\Lambda_t)=\rho(\partial\H^+)+o(1)$.

\item The only modification needed is to justify
$$\Tr(\rho(\delta))=\lim_{t\nearrow\tau}\Tr(Y_0(W_t;\Lambda_t)^{-1}Y_0(Z_t;\Lambda_t))$$
If $\sigma$ is the last time (strictly) before $\tau$ s.t. $W_\sigma=Z_\sigma$, then $Y_0(W_\sigma)=Y_0(Z_\sigma)$. Moreover, for $t$ close to $\tau^{-}$, $Y_0(W_t;\Lambda_t)^{-1}Y_0(Z_t;\Lambda_t)$ is close to the monodromy of the loop $\delta$ in $\H\setminus\gamma_{[0,t]}$, which is (up to conjugation) $\rho(\delta)$.
\end{enumerate}
\end{proof}

\begin{Thm}\label{Thm:CLEtau}
Consider a (nested) $\CLE_4$ $(\ell_\alpha)_{\alpha\in A}$ in $\H$ and punctures $\lambda_1,\dots,\lambda_n$. If $\rho:\pi_1(\H\setminus\{\lambda_1,\dots,\lambda_n\})\rightarrow\SL_2(\R)$ is a representation with unipotent local monodromies close enough to the trivial representation, then
$$\tau(\H\setminus\Lambda,\rho)=\E\left(\prod_{\alpha\in A}\frac{\Tr(\rho(\ell_\alpha))}2\right)$$
\end{Thm}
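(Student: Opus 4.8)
The plan is to run the $\SLE_4(-2)$ branching exploration of $\CLE_4(\H)$, use the recursion of Lemma \ref{Lem:oneloop}(2), and induct on the number $n$ of punctures. Both sides are conformally invariant ($\tau$ by the discussion after Lemma \ref{Lem:taumoeb}; the right-hand side because $\CLE_4$ is), and both depend on $\rho$ only through its conjugacy class (for $\tau$, Lemma \ref{Lem:tauchoice}; for the right-hand side, because the integrand is built from traces), so one may work in $\H$ and replace $\rho$ by a conjugate whenever convenient. For $n\le 1$ the identity is immediate: $\tau\equiv 1$ (empty product, resp. Corollary \ref{Cor:tau1}), while on the right a loop enclosing no puncture contributes $\Tr(\Id)/2=1$ and a loop enclosing the single puncture $\lambda_1$ has monodromy conjugate to the unipotent $\rho(\gamma_1)$, hence trace $2$, so it too contributes $1$.

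For the inductive step, explore $\CLE_4(\H)$ from $(0,0^+)$ and stop at the first time $\tau$ a closed loop $\delta$ encloses a puncture (a.s.\ finite, the outermost $\CLE_4$ loop around $\lambda_1$ being an outermost loop of the ensemble). Put $\Lambda_\delta=\Lambda\cap\mathrm{int}(\delta)$ and let $\mathrm{bad}=\{\Lambda_\delta=\Lambda\}$. On $\mathrm{bad}^c$, $\delta$ splits the punctures nontrivially, so each component of $\H\setminus\gamma_{[0,\tau]}$ — in particular $\mathrm{int}(\delta)$ — holds at most $n-1$ punctures together with an independent nested $\CLE_4$ (nesting inside $\delta$; domain Markov of the exploration outside), and the inductive hypothesis applies there, $\tau$ of the disjoint union being (by definition) the product of the pieces' $\tau$'s, hence the product of the $\CLE_4$ correlators. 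On $\mathrm{bad}$, $\delta$ encloses all punctures and every other component is puncture-free (contributing $1$ on both sides), leaving only $\mathrm{int}(\delta)$, still with $n$ punctures. Writing $F$ for the right-hand side, $G=F-\tau$, and $\theta=\Tr(\rho(\gamma_1\cdots\gamma_n))/2$ for the (conjugation-invariant, hence position-independent) trace attached to the peripheral loop, Lemma \ref{Lem:oneloop}(2) together with the matching domain-Markov/nesting decomposition of $F$ — the $\mathrm{bad}^c$ contributions to $G$ cancelling by induction — collapses to
$$G(\H\setminus\Lambda,\rho)=\theta\,\E\!\left[\mathbf 1_{\mathrm{bad}}\,G\big(\mathrm{int}(\delta)\setminus\Lambda,\ \rho|_{\mathrm{int}(\delta)}\big)\right].$$
Note $\theta\to 1$ as $\rho$ tends to the trivial representation.

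Next I would iterate this $k$ times, producing a nested chain $\mathrm{int}(\delta^{(1)})\supsetneq\cdots\supsetneq\mathrm{int}(\delta^{(k)})$ of $\CLE_4(\H)$ loops that, on the event $\mathrm{bad}_k$ that every step stays bad, all still enclose all $n$ punctures, and giving $G(\H\setminus\Lambda,\rho)=\theta^k\,\E[\mathbf 1_{\mathrm{bad}_k}G(\mathrm{int}(\delta^{(k)})\setminus\Lambda,\cdot)]$; then take $k\to\infty$. For the $F$-part, $\theta^k\,\mathbf 1_{\mathrm{bad}_k}F(\mathrm{int}(\delta^{(k)})\setminus\Lambda,\cdot)$ is a conditional expectation of $\prod_\alpha\Tr(\rho(\ell_\alpha))/2$, dominated by $\prod_\alpha|\Tr(\rho(\ell_\alpha))|/2\in L^1$ (the integrability underlying Lemma \ref{Lem:UI}, for $\rho$ close to trivial); since $\mathbf 1_{\mathrm{bad}_k}\to 0$ a.s.\ — a.s.\ some $\CLE_4$ loop eventually separates $\lambda_1$ from $\lambda_2$, the nested loops around $\lambda_1$ having conformal radii tending to $0$ — dominated convergence kills it. For the $\tau$-part, $|\theta^k\tau(\mathrm{int}(\delta^{(k)})\setminus\Lambda,\cdot)|\le|\theta|^kC_\tau\,\P(\mathrm{bad}_k)$ with $C_\tau$ the uniform bound from Lemma \ref{Lem:tauboundunif}, and $\mathrm{bad}_k$ forces $k$ distinct $\CLE_4(\H)$ loops around $\lambda_1$ to also enclose $\lambda_2$, an event of probability $\le C(\Lambda)e^{-\alpha k}$ with $\alpha>0$ \emph{universal} (exponential tail of the number of nested loops separating a fixed point from another, via the crossing input of Lemma \ref{Lem:CLEarm}). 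Choosing $\rho$ close enough to trivial that $|\theta|e^{-\alpha}<1$ makes this term vanish too, so $G\equiv 0$.

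The main obstacle is this last quantitative step: one must both know the iteration terminates a.s.\ (using the fine structure of nested $\CLE_4$ around a point) and beat the bounded — but a priori not $<1$ — factor $|\theta|^k$ with the exponential tail of $\mathrm{bad}_k$, exploiting that the tail rate $\alpha$ is universal while $\theta$ can be pushed arbitrarily close to $1$. A secondary point is that ``close to the trivial representation'' must be read in a conjugation-invariant way (say, on the character), so that it is inherited by $\rho|_{\mathrm{int}(\delta)}$ at every stage; conjugation-invariance of $\tau$ and of the trace product then lets one choose at each stage a representative with monodromies genuinely near $\Id$, so the earlier lemmas apply verbatim.
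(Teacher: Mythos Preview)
Your argument is correct and reaches the same conclusion, but it is organized differently from the paper's proof. The paper does \emph{not} induct on the number of punctures. Instead it iterates Lemma~\ref{Lem:oneloop}(2) directly: at each step $k$ one runs an $\SLE_4(-2)$ in the component containing the smallest-index puncture not yet isolated, stops at the first non-trivial loop $\gamma_k^l$, and sets
\[
M_k=\tau\bigl(\H\setminus\textstyle\bigcup_{i\le k}\gamma_i\setminus\Lambda;\rho\bigr)\prod_{i=1}^{k\wedge K}\frac{\Tr(\rho(\gamma_i^l))}2,
\]
where $K$ is the first time every component contains at most one puncture. Lemma~\ref{Lem:oneloop}(2) makes $(M_k)$ a martingale; $M_K=T_\rho$ by Corollary~\ref{Cor:tau1}; and uniform integrability follows from Lemma~\ref{Lem:tauboundunif} together with the exponential-tail input of Lemma~\ref{Lem:CLEarm}. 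Optional stopping then gives $\tau=M_0=\E(M_K)=\E(T_\rho)$ in one stroke.

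Your route trades this single UI martingale for an induction on $n$ plus a contraction on the ``bad'' event. The ingredients are the same (Lemma~\ref{Lem:oneloop}(2), Lemma~\ref{Lem:tauboundunif}, the crossing/tail estimate), and your identity $\theta^k\mathbf 1_{\mathrm{bad}_k}F(\mathrm{int}(\delta^{(k)}))=\mathbf 1_{\mathrm{bad}_k}\E[T_\rho\mid\mathcal F_k]$ is exactly what makes the $F$-part go through by dominated convergence. The paper's approach is shorter and avoids splitting into bad/good; yours is more modular and makes explicit that the only genuine obstruction is the peripheral event $\mathrm{bad}_k$, which you then kill with the exponential tail of $N_{\lambda_1\lambda_2}$ (the paper actually proves a \emph{super}exponential tail in the lemma preceding Lemma~\ref{Lem:CLEarm}, so your balancing of $|\theta|^k$ against $\P(\mathrm{bad}_k)$ is comfortable). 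One small pointer: the integrability you invoke for $|T_\rho|$ is argued in the first paragraph of the paper's proof (via Lemma~\ref{Lem:CLEarm}), not in Lemma~\ref{Lem:UI} itself.
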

\begin{proof}
The variable 
$$T_\rho=\prod_{\alpha\in A}\frac{\Tr(\rho(\ell_\alpha))}2$$
contains a.s. finitely many non unit factors - since local monodromies are unipotent, only loops encircling at least two punctures contribute. If we map $\H$ eg to the unit disk $\U$ and draw disjoint crosscuts from the punctures to the boundary, if $N(\ell)$ denotes the number of successive visits of distinct crosscuts by a loop $\ell$, then $|T_\rho|\leq (1+\eps)^{\sum_\alpha N(\ell_\alpha)}$, with $\eps$ small when $\rho$ close to the trivial representation. By Lemma \ref{Lem:CLEarm}, $\sum_\alpha N(\ell_\alpha)$ has exponential moments and consequently for a given $p\geq 1$ and punctures $\lambda_i$, $T_\rho$ is in $L^p$ for $\rho$ close enough to the trivial representation. Note that the tail estimate is uniform for $\CLE$s in subdomains of $\U$ (restricting the crosscuts and representations).

We use the exploration description of $\CLE_4$ introduced in \cite{Sheff_explor} (the invariance - in law - w.r.t. choices is established in \cite{SheWer_CLE}). 

We consider first a discrete skeleton of the exploration. In the upper half-plane model, we start from $\H\setminus\{\lambda_1,\dots,\lambda_n\}$ with $\lambda=1$ and seed the exploration at $z=0$. We run an $\SLE_4(-2)$ started from $(0,0^+)$ up to $\tau_1$, the first time the trace completes a non-trivial loop w.r.t the punctures. Let $\gamma_1$ be the trace at time $\tau_1$. Let $i(1)$ be the smallest index in $\{1,\dots,n\}$ s.t. $\lambda_i$ is not the single puncture in its connected component of $\H\setminus\gamma_1$. Then run an $\SLE_4(-2)$ (seeded eg at the tip of $\gamma_1$) in that connected component; this gives a trace $\gamma_2$. Then iterate this procedure for $K$ steps, where $K$ is the smallest integer s.t. each connected component of $\H\setminus\cup_{i\leq K}\gamma_i$ contains at most one puncture. Each step produces either one loop encircling at least two punctures; or the outermost loop encircling just one puncture. By the previous argument, $K$ is a.s. finite with exponential tail. 

Then we have the sequential description
$$T_\rho=\prod_{\alpha\in A}\frac{\Tr(\rho(\ell_\alpha))}2=\prod_{i=1}^K\frac{\Tr(\rho(\gamma_i^l))}2$$
where $\gamma_i^l$ is the last loop completed by the $\SLE$ trace $\gamma_i$. Let
$$M_k=\tau(\H\setminus\cup_{i\leq k}\gamma_i\setminus\Lambda;\rho)\prod_{i=1}^{k\wedge K}\frac{\Tr(\rho(\gamma_i^l))}2$$
Then by Lemma \ref{Lem:oneloop}, $M$ is a discrete-time martingale w.r.t. the filtration $(\sigma(\gamma_1,\dots,\gamma_k))_k$. We have $M_K=T_\rho$ (by Corollary \ref{Cor:tau1}) and $M$ is uniformly integrable for $\rho$ close enough to the trivial representation by the argument above and Lemma \ref{Lem:tauboundunif}. Thus $M_0=\E(M_K)$, as claimed.
\end{proof}

\section{Technical results}

In this section we collect several lemmas bearing on crossing estimates for $\CLE$; martingales for $\SLE_\kappa(\rho)$ in the Dirichlet process regime; {\em a priori} uniform bounds on $\tau$-functions; and the infinite volume limit of double dimers (in the upper half-plane).  

\subsection{Crossing estimates}

The following superexponential tail bound for $N_{xy}$, the number of (nested) $\CLE$ loops encircling $x\neq y$, is not logically needed in the main results (and we shall give a somewhat sketchy proof). It may be used to relax the assumption that $\rho$ be close to the trivial representation (in the case of two punctures).

\begin{Lem}
Let $x,y$ be distinct points in a simply connected domain $D$ and $N_{x,y}$ the number of loops surrounding both $x$ and $y$ in a $\CLE_\kappa$, $\kappa\leq 4$. Then the distribution of $N_{x,y}$ has superexponential tail.
\end{Lem}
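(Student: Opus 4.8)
The plan is to combine a deterministic conformal-modulus estimate — forcing most of the annuli between consecutive nested loops around $\{x,y\}$ to be conformally thin — with a uniform crossing estimate asserting that a $\CLE$ loop is conformally unlikely to hug the boundary of its domain, and then to absorb the resulting combinatorial cost by a union bound over which annuli are thin. Since a $\CLE_\kappa$ with $\kappa\le 8/3$ has no loops (so $N_{x,y}\equiv 0$), I assume $\kappa\in(8/3,4]$.

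First I would normalize, by conformal invariance of $\CLE$, to $D=\U$ the unit disk with $x=0$. On the event $\{N_{x,y}\ge n\}$, let $\ell_1\supset\ell_2\supset\cdots\supset\ell_n$ be the successive loops surrounding $\{x,y\}$, with Jordan interiors $D_1\supset\cdots\supset D_n$, and put $A_i:=D_i\setminus\overline{D_{i+1}}$ for $1\le i\le n-1$. These are pairwise disjoint, serially nested topological annuli, so the serial rule for extremal length (superadditivity of moduli of nested annuli) gives $\sum_{i=1}^{n-1}\mathrm{mod}(A_i)\le\mathrm{mod}(\U\setminus\overline{D_n})$. As $\overline{D_n}$ is a continuum containing the two distinct points $x,y$, one has $\mathrm{mod}(\U\setminus\overline{D_n})\le m^*$, where $m^*=m^*(D,x,y):=\sup\{\mathrm{mod}(\U\setminus K):K\subset\U\text{ a continuum},\ x,y\in K\}$; the finiteness of $m^*$ follows from $|x-y|>0$ via a Teichm\"uller-type bound (plus a compactness argument for the extremal $K$). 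Hence at least $\lceil(n-1)/2\rceil$ of the indices $i\le n-1$ satisfy $\mathrm{mod}(A_i)<\delta_n:=2m^*/(n-1)$, and $\delta_n\to 0$.

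Next I would feed in the probabilistic input. Write $\mc F_i=\sigma(\ell_1,\dots,\ell_i)$ and $\mc G_i=\{N_{x,y}\ge i+1\}\cap\{\mathrm{mod}(A_i)<\delta_n\}$. By the domain Markov property of the nested $\CLE_\kappa$ — conditionally on $\mc F_i$, the loops inside $D_i$ form a fresh nested $\CLE$ in $D_i$ — together with conformal invariance, on $\{N_{x,y}\ge i\}$ we have
$$\P(\mc G_i\mid\mc F_i)\ \le\ \P\bigl(\exists\ \CLE_\kappa\text{ loop }\ell\text{ around }0\text{ in }\U\text{ with }\mathrm{mod}(\U\setminus\overline{\mathrm{int}\,\ell})<\delta_n\bigr)=:h(\delta_n),$$
where I used that, among loops around $0$, the quantity $\mathrm{mod}(\U\setminus\overline{\mathrm{int}\,\ell})$ is minimized by the outermost one; writing $G$ for that minimal value, $h(\delta)=\P(G<\delta)$. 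Two features matter: this bound is independent of $\mc F_i$; and, because $\CLE_\kappa$ loops with $\kappa\le 4$ lie a.s. at positive conformal distance from $\partial\U$, we have $G>0$ a.s., so $h(\delta)\downarrow 0$ as $\delta\downarrow 0$ by monotone convergence. Iterating over $i$ (each $\mc G_j$ being $\mc F_{j+1}$-measurable) gives $\P\bigl(\bigcap_{i\in S}\mc G_i\bigr)\le h(\delta_n)^{|S|}$ for every $S\subseteq\{1,\dots,n-1\}$.

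Finally I would assemble: since $\{N_{x,y}\ge n\}\subseteq\bigcup_{|S|\ge(n-1)/2}\bigcap_{i\in S}\mc G_i$,
$$\P(N_{x,y}\ge n)\ \le\ \sum_{\substack{S\subseteq\{1,\dots,n-1\}\\ |S|\ge(n-1)/2}}h(\delta_n)^{|S|}\ \le\ 2^{\,n-1}\,h(\delta_n)^{(n-1)/2}\ =\ \bigl(2\sqrt{h(\delta_n)}\bigr)^{\,n-1},$$
and since $\delta_n\to 0$ forces $h(\delta_n)\to 0$, the right-hand side decays faster than any exponential, i.e. $\frac1n\log\P(N_{x,y}\ge n)\to-\infty$. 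The step I expect to be the main obstacle is the uniform crossing input $h(\delta)\to 0$: that a $\CLE$ loop surrounding a marked point is conformally unlikely to approach the boundary, uniformly over the (possibly highly degenerate) inner domains $D_i$. In the scheme above this is defused by dropping the second point $y$ in the conditional estimate, so that conformal invariance renders the bound domain-independent; but it still rests on the nontrivial fact that $\CLE_\kappa$ ($\kappa\le 4$) loops stay off the boundary. A lesser point needing care is the finiteness of $m^*$ together with the extremal-length bookkeeping for the nested annuli $A_i$.
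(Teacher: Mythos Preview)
Your proof is correct and takes a genuinely different route from the paper. The paper argues dynamically via the radial $\SLE_\kappa(\kappa-6)$ exploration of the $\CLE$: mapping to $(\U,0,r)$, it tracks the image $r_t$ of $y$ under the radial Loewner flow (which is pushed exponentially fast toward $\partial\U$), and then balances two inputs --- the $O((1-r)^\alpha)$ Schramm-type estimate for a loop around $0$ to also enclose $r$, and the $O(e^{-cn^2/t})$ estimate for $n$ loops to be completed in radial time $t$ (from \cite{SSW_rad}) --- to obtain an explicit bound $\P(N_{x,y}\ge 2n)\le e^{-cn^{3/2}}$.

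Your argument is static and purely geometric: Gr\"otzsch superadditivity forces most inter-loop annuli to be conformally thin, and the nested domain Markov property plus conformal invariance reduces the conditional cost of each thin annulus to a single domain-independent quantity $h(\delta)$. This is more elementary --- it uses only that $\CLE_\kappa$ loops ($\kappa\le 4$) a.s.\ avoid the boundary and the nested Markov property, without invoking radial $\SLE$ machinery or the results of \cite{SSW_rad} --- and the reduction that drops $y$ in the conditional bound is a clean way to handle the degenerating geometry of the $D_i$'s. The trade-off is that the paper's approach delivers an explicit stretched-exponential rate, whereas yours gives superexponential decay with a rate governed by the (unquantified) speed of $h(\delta)\to 0$; quantifying the conformal radius tail of the outermost loop would recover a comparable explicit bound. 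One minor bookkeeping point: to make the filtration step clean it is convenient to take $\mathcal F_i$ to be the $\sigma$-algebra generated by the first $i$ nested loops around $0$ (these coincide with $\ell_1,\dots,\ell_i$ on $\{N_{x,y}\ge i\}$), so that the nested Markov property applies directly.
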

\begin{proof}
Map conformally $(D,x,y)$ to $(\U,0,r)$ $0<r<1$. We consider the exploration of the $\CLE_\kappa$ in radial parameterization (fixing $0$). In the radial Loewner equation, we have
$$dr_t\geq r_t\frac{1-r_t}2$$
and then 
$$\log\frac{r_t}{1-r_t}\geq \log\frac{r_0}{1-r_0}+\frac t2$$
and $1-r_t\leq c_0e^{-t/2}$, $c_0$ a constant depending on $r_0$. On the one hand, the probability that the first loop around $0$ does not disconnect $0$ from $r$ is $O((1-r)^\alpha)$ for some $\alpha>0$ (eg by Schramm's formula \cite{Sch_percform}). On the other hand, the probability that the first loop is completed before time $t$ is $O(e^{-c/t})$, eg from \cite{SSW_rad}; and the probability that $n$ loops are completed before $t$ is $O(e^{-cn^2/t})$.

Let $\tau_n$ be the time at which the $n$-th loop around $0$ is completed. Then on the event $\{N_{x,y}\geq 2n\}$, either the first $n$ loops are completed at time $\tau_n$ with $\tau_n\leq\sqrt n$; or $(1-r_{\tau_n})\geq e^{-c\sqrt n}$. This gives the (crude) estimate
$$\P(N_{xy}\geq 2n)\leq e^{-cn^{3/2}}$$
\end{proof}

\begin{Lem}\label{Lem:CLEarm}
If $D\subset\C$ is a simply connected domain of diameter $\leq C$, $\kappa\in (\frac 83,4]$, then there are $c_1,c_2,\beta>0$ (depending on $C,\kappa$) s.t. for all $x\in\C$, $0<r<R$ and $k\in\N$, 
$$\P({\rm a\ }\CLE_\kappa{\rm\ in\ }D{\rm\ contains\ }k{\rm\ distinct\ crossings\ of\ }\{z:r<|z-x|<R\})\leq c_1(c_2r/R)^{\beta k}$$
\end{Lem}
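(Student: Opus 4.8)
The plan is to reduce the multi-crossing bound to the one-crossing (``arm'') estimate by a standard iteration, using the loop-soup construction of $\CLE_\kappa$ recalled in Section \ref{Sec:SLECLE} so that spatial independence is available. First I would recall (or cite from \cite{SheWer_CLE,Wer_CRASloop}) that a $\CLE_\kappa$ in $D$ can be realized from a Brownian loop soup of intensity $c(\kappa)$ in $D$ by taking outer boundaries of outermost clusters, and that a crossing of the annulus $\{r<|z-x|<R\}$ by a $\CLE_\kappa$ loop forces a crossing of that annulus by a cluster of the loop soup. The key one-scale estimate is: the probability that some cluster of the loop soup (restricted to $D$) crosses the fixed annulus $A(x;\rho,2\rho)=\{z:\rho<|z-x|<2\rho\}$ is bounded by a constant $q=q(\kappa)<1$, uniformly over $x$ and over $\rho$ with $2\rho\leq C$; this follows from the one-arm exponent for $\CLE_\kappa$ (finite and positive for $\kappa\in(8/3,4]$) together with quasi-multiplicativity, or directly from the loop-soup estimates of \cite{SheWer_CLE}. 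Using the loop soup in dyadic sub-annuli $A_j=\{2^{j}r<|z-x|<2^{j+1}r\}$ for $j=0,\dots,\lfloor\log_2(R/r)\rfloor-1$, which involve loops contained in disjoint regions up to a bounded-overlap correction, and the Markovian/independence property of the Poissonian loop soup, the probability that a given one of these sub-annuli is crossed by a cluster is at most $q$, and the events in non-adjacent sub-annuli are independent. Hence the probability that \emph{every} sub-annulus is crossed is at most $q^{\lfloor\log_2(R/r)\rfloor/2}$, which gives the single-crossing bound $\P(\text{annulus }\{r<|z-x|<R\}\text{ is crossed})\leq c_1(c_2 r/R)^{\beta}$ with $\beta=\tfrac12\log_2(1/q)$.

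Next I would upgrade from ``crossed'' to ``crossed $k$ times''. The point is that $k$ disjoint crossings of $\{r<|z-x|<R\}$ by $\CLE$ loops produce $k$ disjoint crossings of the loop-soup clusters, and one can peel them off one at a time. Condition on the innermost crossing cluster and its interior; by the domain-Markov / restriction property of the loop soup (loops not touching the explored region are unaffected, and removing a cluster only decreases the remaining configuration in the relevant region), the conditional law of the loop soup in the complementary annular region dominates, from above, an independent loop soup in a slightly smaller annulus. Applying the single-crossing estimate to this conditional configuration and iterating $k$ times yields
\begin{equation*}
\P(k\text{ distinct crossings of }\{r<|z-x|<R\})\leq \bigl(c_1(c_2 r/R)^{\beta}\bigr)^{k},
\end{equation*}
which after adjusting constants is the claimed $c_1(c_2r/R)^{\beta k}$. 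A clean way to organize the iteration is to use the radial/BCLE exploration: explore loops from $\partial D$ inward; each time the exploration first disconnects a crossing of the annulus, the conformal radius (seen from $x$) of the remaining domain has shrunk by a definite factor with probability $\geq 1-q$ per dyadic scale, so $k$ crossings cost $k$ independent ``runs'' through $\sim\log_2(R/r)$ scales, giving the same bound.

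The main obstacle is making the conditioning/independence step fully rigorous: after exploring and removing one crossing cluster, one must argue that the remaining loop configuration in the smaller annulus still satisfies a single-crossing estimate with the \emph{same} constants, uniformly, despite the explored region being random and potentially irregular. With the Brownian loop soup this is relatively painless because the soup is a Poisson process and the loops avoiding the explored set are genuinely independent of those touching it, and monotonicity in the intensity/domain lets one dominate by a clean reference configuration; with a direct $\CLE$ argument one would instead invoke the conformal Markov property of $\CLE_\kappa$ together with a Koebe-type distortion bound to control the conformal radius of the leftover domain. Either route reduces everything to the one-arm exponent of $\CLE_\kappa$ for $\kappa\in(8/3,4]$, which is known; the diameter bound $\operatorname{diam}(D)\le C$ enters only to make the base-scale constant $q$ uniform. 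I would present the loop-soup version, as it keeps the independence structure transparent and gives the exponential-in-$k$ decay essentially for free.
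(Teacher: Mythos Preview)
Your route differs from the paper's, and the peeling step contains a real gap. The statement concerns $k$ distinct crossings of the annulus, which may all be made by a \emph{single} $\CLE$ loop (equivalently, a single loop-soup cluster winding back and forth). Your iteration ``condition on the innermost crossing cluster and remove it'' then removes all $k$ crossings at once and gives nothing; the inductive step needs each crossing to use a fresh cluster, and you have not argued this. The independence claim for the one-arm estimate (``events in non-adjacent dyadic sub-annuli are independent'') is also too strong as stated: crossings of a sub-annulus by a \emph{cluster} depend on loops not contained in that sub-annulus.

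The paper's argument avoids both issues and is worth comparing. It takes the $k=1$ case as known (citing \cite{SSW_rad,MWW_extr}), then splits $A(r,R)$ into three concentric annuli $A(r,r^{2/3}R^{1/3})$, $A(r^{2/3}R^{1/3},r^{1/3}R^{2/3})$, $A(r^{1/3}R^{2/3},R)$. Any individual Brownian loop contributing $\ell\geq 2$ crossings of the middle annulus must itself make $\ell$ disjoint crossings of one of the outer annuli; a Beurling estimate bounds the mass of such loops by $O((r/R)^{\beta_0\ell})$, so except on an event of probability $O((r/R)^{\beta_1 k})$ the $\lfloor k/2\rfloor$ crossings of the middle annulus are realized by chains using pairwise \emph{disjoint sets of loops}. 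At that point the paper invokes the disjoint-occurrence (BK-type) inequality for Poisson point processes \cite{vdB_PPP}, together with monotonicity of the crossing event in the domain, to get the product bound directly---no conditioning or exploration is needed. If you want to salvage the peeling approach, you would first need a separate estimate ruling out many crossings by one loop (this is exactly the Beurling step), and even then the BK inequality is a cleaner substitute for the conditioning argument you flag as the main obstacle.
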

The assumption on the diameter bound is for convenience and could be dispensed with.
\begin{proof}
In the case where $k=1$ and the annulus is in $D$, this follows easily from \cite{SSW_rad}, see also Lemma 3.4 in \cite{MWW_extr}. Then we can use the description of $\CLE_\kappa$ in terms of loop-soup clusters, as in \cite{SheWer_CLE}. Denote by $A(r_1,r_2)$ the annulus $\{z:r_1<|z-x|<r_2\}$. If the $\CLE$ has $k$ distinct crossings of $A(r,R)$, there are $\lfloor k/2\rfloor$ chains of loops crossing $A(r^{2/3}R^{1/3},r^{1/3}R^{2/3})$. Any loop contributing to $\ell\geq 2$ crossings must itself make $\ell$ disjoint crossings of $A(r^{1/3}R^{2/3},R)$ or $A(r,r^{2/3}R^{1/3})$; a Beurling estimate shows that the mass of such loops is $O((r/R)^{\beta_0\ell})$ for some $\beta_0>0$. So up to an event of probability $O((r/R)^{\beta_1 k})$ for some $\beta_1>0$, we have $\lfloor k/2\rfloor$ crossings  of $A(r^{2/3}R^{1/3},r^{1/3}R^{2/3})$ by chains involving disjoint sets of loops. 

The existence of such a crossing is an increasing event and its probability is a monotone function of the domain $D$ (by the restriction property of the loop-soup). Then, as in Lemma 9.6 of \cite{SheWer_CLE}, we can use the disjoint occurrence inequality for Poisson Point Processes of \cite{vdB_PPP} to get the desired bound. 
\end{proof}

\subsection{$\SLE_\kappa(\rho)$ for small $\rho$}\label{Sec:kapparho}

Here we consider the upper half-plane $\H$ with a marked seed $w$, force point $z=z_0$ and spectator points $z_1,\dots,z_n$ on the boundary (as usual spectator points in the bulk can be treated by reflection). Let $(g_t)_t$ designate a Loewner chain and set $Z_t=Z_t^0=g_t(z)$, $Z_t^i=g_t(z_i)$.

Consider the dynamics specified by
\begin{align*}
dg_t(u)&=\frac 2{g_t(u)-W_t}\\
dW_t&=\sqrt\kappa dB_t+\frac{\rho}{W_t-Z_t}dt
\end{align*}
where $B_t$ is a standard linear BM; this is non-problematic as long as $Z_t\neq W_t$. Setting $\sqrt\kappa V_t=Z_t-W_t$, we have
$$dV_t=-dB_t+\frac{\delta-1}{2V_t}dt$$
with $\delta=1+2\frac{\rho+2}\kappa$. This identifies $V$ with a Bessel process of dimension $\delta$. We consider here the case where $V_t\geq 0$ at all times.

If $\delta\geq 2$, the Bessel process $V$ does not hit $0$ (except possibly at its starting point. If $\delta>1$, $V$ can be still be defined as a nonnegative semimartingale  and $\int_0^.ds/V_s$ is a finite variation process, and we can start from $V$ and define $Z_t-Z_0=\int_0^t\frac{2}{\sqrt\kappa V_s}$, $W_t=Z_t-\sqrt\kappa V_t$. That is the original construction of \cite{LSW_restr}.

If $0\leq\delta\leq 1$, $\int_0^tds/V_s$ is no longer of finite variation (or even defined). As in \cite{Dub_exc,Sheff_explor}, this may be remediated by taking principal values of Bessel processes \cite{Ber_comp}. Specifically, there is a bicontinuous local time process $(\ell_t^x)_{t,x\geq 0}$ s.t.
$$\int_0^tf(V_s)ds=\int_0^\infty f(x)\ell_t^xx^{\delta-1}dx$$
for, say, $f$ bounded and continuous. This local time is H\"older in $x$, and then we may define 
$$p.v.\int_0^t\frac{ds}{V_s}=\int_0^\infty(\ell_t^x-\ell_t^0)x^{\delta-2}dx$$
On the set $\{t:V_t\neq 0\}$, this is differentiable with derivative $V_t^{-1}$. 
The decomposition of $V$ as the sum of a local martingale and a zero energy process reads
$$V_t=-B_t+\frac{\delta-1}{2}
p.v.\int_0^t\frac{ds}{V_s}$$
if $0<\delta<1$ and (classically)
$$V_t=B_t+\frac 12\ell_t^0$$
if $\delta=1$.

Then one defines $Z_t-Z_0=p.v.\int_0^t\frac{2}{\sqrt\kappa V_s}$, $W_t=Z_t-\sqrt\kappa V_t$. A difficulty is that $p.v.\int_0^t ds/V_s$ is not of finite variation, but of finite $p$-variation for some $p<2$. Consequently $W$ is not a semimartingale but a Dirichlet process in the sense of eg \cite{Fuku_Dirforms} (the sum of a continuous local martingale and a process with zero quadratic variation).

\begin{Lem}\label{Lem:Ito}
In the case $\delta\in (0,1]$, let $\phi=\phi(w,z,z_1,\dots,z_n)$ be a $C^2$ function s.t. 
$$\frac\kappa 2\phi_{ww}+\frac\rho{w-z}\phi_w+\sum_{i=0}^n\frac{2}{z_i-w}\phi_{z_i}-\sum_{i=0}^n\frac{2h_i}{(z_i-w)^2}\phi=0$$
and 
\begin{align*}
-\rho\phi_w+2\phi_z&=0&&{\rm\ if\ }0<\delta<1\\
\phi_w=\phi_z&=0&&{\rm\ if\ }\delta=1
\end{align*}
at $z=w$. Then
$$t\mapsto \phi(W_t,Z_t,Z^1_t,\dots,Z^n_t)\prod_{i=0}^n g'_t(z_i)^{h_i}$$
is a local martingale up to
$$\tau=\sup\{t>0: Z^i_s\neq W_s\ \forall s\in [0,t], i=1,\dots,n\}$$
\end{Lem}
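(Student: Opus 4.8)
The plan is to reduce the claim to a generalized Itô-formula computation. The only point going beyond the classical $\SLE_\kappa(\rho)$ martingale calculus is that for $\delta\in(0,1]$ the driving process $W$ is not a semimartingale but only a Dirichlet process: writing $J_t=\mathrm{p.v.}\int_0^t ds/V_s$, the ``drift'' of $W$ is (a scalar multiple of) $J$, which has zero quadratic variation but infinite variation, so the ordinary Itô formula must be replaced by its F\"ollmer/Russo--Vallois version (the relevant calculus of principal values of Bessel processes is in \cite{Ber_comp}, the Dirichlet-process framework in \cite{Fuku_Dirforms}). Set $X_t=(W_t,Z_t,Z^1_t,\dots,Z^n_t)$ and $G_t=\prod_{i=0}^n g'_t(z_i)^{h_i}$; the latter is continuous of finite variation with $dG_t/G_t=-\sum_{i=0}^n 2h_i(Z^i_t-W_t)^{-2}\,dt$, well defined up to $\tau$ since $Z^i_t\neq W_t$ for $i\geq 1$ (only the force point $z_0=z$ may collide with the seed $w=W_0$). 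First I would record the Dirichlet decomposition of $X$: the martingale part is $(\sqrt\kappa B_t,0,\dots,0)$; $Z^1,\dots,Z^n$ are absolutely continuous up to $\tau$; and the zero-energy parts of $Z$ and of $W$ are the multiples $\tfrac2{\sqrt\kappa}J_t$ and $-\tfrac\rho{\sqrt\kappa}J_t$ of the single process $J$, with an extra local-time contribution $-\tfrac{\sqrt\kappa}2\ell^0_t$ to $W$ in the case $\delta=1$ (where $\rho=-2$).

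Applying the Itô formula for $C^2$ functions of a Dirichlet process, and using that $G$ is of finite variation with no covariation with $\phi(X)$, I would get
\[ M_t-M_0=\int_0^t G_s\,\phi_w(X_s)\sqrt\kappa\,dB_s+A_t, \]
where the remainder $A_t$ is the sum of: the absolutely continuous term $\int_0^t G_s\big(\tfrac\kappa2\phi_{ww}+\sum_{i\geq1}\tfrac{2\phi_{z_i}}{Z^i_s-W_s}-\phi\sum_{i=0}^n\tfrac{2h_i}{(Z^i_s-W_s)^2}\big)(X_s)\,ds$; the forward integral $\int_0^t G_s\,\nabla\phi(X_s)\cdot d^-X^{\mathrm{ze}}_s$, which by linearity of the forward integral and the proportionality of $Z-Z_0$ and $W^{\mathrm{ze}}$ to $J$ equals $\int_0^t G_s\,\tfrac1{\sqrt\kappa}(2\phi_z-\rho\phi_w)(X_s)\,d^-J_s$; and, when $\delta=1$, the term $-\tfrac{\sqrt\kappa}2\int_0^t G_s\,\phi_w(X_s)\,d\ell^0_s$.

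The crux is that the boundary condition $2\phi_z-\rho\phi_w=0$ on $\{z=w\}$ together with $\phi\in C^2$ forces $|(2\phi_z-\rho\phi_w)(X_s)|\leq C|Z_s-W_s|=C\sqrt\kappa\,V_s$ on compact subintervals of $[0,\tau)$; hence $(2\phi_z-\rho\phi_w)(X_s)/(\sqrt\kappa V_s)$ stays bounded near $\{V=0\}$ and the forward integral against $J$ collapses to the absolutely convergent Lebesgue integral $\int_0^t G_s(2\phi_z-\rho\phi_w)(X_s)(\sqrt\kappa V_s)^{-1}\,ds$. When $\delta=1$ the measure $d\ell^0$ is carried by $\{V_s=0\}$, where $W_s=Z_s$, so the companion condition $\phi_w=0$ at $z=w$ annihilates the integrand of that term. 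Consequently $A$ is absolutely continuous, and collecting terms (using $\sqrt\kappa V_s=Z_s-W_s$ and $z_0=z$) its density is $G_s$ times the left-hand side of the assumed PDE evaluated at $(w,z,z_1,\dots,z_n)=X_s$, which vanishes identically --- this is precisely the classical $\SLE_\kappa(\rho)$ drift cancellation, now valid on all of $[0,\tau)$ because the density is bounded on compacts even across the collision times $\{W_s=Z_s\}$. Hence $A\equiv0$ and $M_t-M_0=\int_0^t G_s\phi_w(X_s)\sqrt\kappa\,dB_s$ is a local martingale up to $\tau$.

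I expect the main obstacle to be exactly this collapse step: showing that, under the boundary condition, the delicate principal-value/forward integral $\int_0^t G_s(2\phi_z-\rho\phi_w)(X_s)\,d^-J_s$ is genuinely an absolutely convergent Lebesgue integral; and the companion bookkeeping point that the F\"ollmer--Itô formula may legitimately be applied to $\phi$ as a smooth function of all the (non-semimartingale) coordinates of $X$, not merely of the Bessel process $V$. Both are by now standard in the analysis of $\SLE_\kappa(\rho)$ in the Dirichlet-process regime, and I would follow the treatments in \cite{Dub_exc,Sheff_explor} (and \cite{Ber_comp} for the underlying principal-value calculus).
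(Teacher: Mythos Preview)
Your proposal is correct and follows essentially the same approach as the paper's own proof: apply the F\"ollmer/Dirichlet-process It\^o formula, identify the single problematic forward integral $\int(2\phi_z-\rho\phi_w)\,d^-J$ coming from the proportional zero-energy parts of $W$ and $Z$, use the oblique boundary condition together with $\phi\in C^2$ to bound the integrand by $C|Z_s-W_s|$ and thereby collapse the principal-value integral to an absolutely convergent Lebesgue integral, and treat the $\delta=1$ local-time term via $\phi_w=0$ at $z=w$. The paper's proof differs only in presentation (it sets $h_i=0$ for notational simplicity and writes the key integral against $dZ$ rather than $dJ$; it cites \cite{Ber_avar} for the Dirichlet-process calculus).
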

This is, of course, classical in the case $\delta>1$ (where the boundary conditions at $z=w$ are not needed). The condition $\phi_z=\phi_w=0$ at $z=w$ means that $z\mapsto \phi(z,z,z_1,\dots)$ is locally constant, a natural condition in terms of $\CLE$.
\begin{proof}
Recall that $d\log g'_t(z_i)=-\frac{2dt}{(g_t(z_i)-W_t)^2}$; this accounts for the $\prod g'_t(z_i)^{h_i}$ factor. For notational simplicity we assume now that $h_0=\cdots=h_n=0$.

Here $W$ is a Dirichlet process and the $Z$'s are zero energy processes. Then eg by Section 3 of \cite{Ber_avar} we may write
\begin{align*}
\phi(W_t,Z_t,Z_t^1,\dots)-\phi(W_0,Z_0,Z_0^1,\dots)&=\int_0^t\phi'_w(W_s,\dots)dW_s+\frac\kappa 2\int_0^t\phi''_u(W_s,\dots)ds+\sum_{i=0}^n\int_0^t\phi'_{z_i}(W_s,\dots)dZ^i_s\\
&=\int_0^t\phi'_w(W_s,\dots)\sqrt\kappa dB_s+\int_0^t(-\frac{\rho}2\phi'_w+\phi'_z)dZ_s\\
&+\frac\kappa 2\int_0^t\phi''_u(W_s,\dots)ds+\sum_{i=1}^n\int_0^t\phi'_{z_i}(W_s,\dots)dZ^i_s
\end{align*}
if $0<\delta<1$. Here $\int(\dots)dB_s$ is a local martingale, the other integrals are Lebesgue-Stieltjes integrals except for
$$\int_0^t(-\frac\rho 2\phi'_w+\phi'_z)dZ_s$$
which is taken in the sense of F\"ollmer \cite{Foll_calcul}, viz. simply as the limit of Riemann sums along a deterministic sequence of subdivisions. We check easily that under %
the oblique condition $\rho\phi'_w-2\phi'_z=0$ at $z=w$, this is actually simply the converging integral
$$\int_0^t(\rho\phi'_w-2\phi'_z)(W_s,Z_s,\dots)\frac{ds}{W_s-Z_s}$$
Then terms cancel out except for the local martingale $\int_0^t\phi'_wdB_s$.

In the case $\delta=1$, there is an additional term $\int_0^t\phi'_w(W_s,\dots)d\ell^0_t$, which vanishes identically under the condition $\phi'_w=0$ at $z=w$.
\end{proof}

\subsection{Bounds on $\tau$-functions}

We have need for a priori uniform bounds on $\tau$-functions, which depend on the position of punctures (up to conformal automorphisms) and a choice of representation of the fundamental group of the punctured domain. Under exploration, the punctures stay fixed and the domain shrinks.

Let us start with a simply connected domain $D$ with punctures $z_1,\dots,z_n$. The number $n$ is fixed (or bounded). We consider disjoint cuts $\delta_1,\dots,\delta_n$ connecting $z_1,\dots,z_n$ to the boundary of $D$. Let $B_1,\dots,B_n$ be independent Brownian motions started from $z_1,\dots,z_n$ and stopped upon exiting $D$. We say that the cuts $\delta_1,\dots,\delta_n$ are {\em $\eps$-separated} if, with probability at least $\eps$, $B_i$ exits the domain $D$ without hitting $B_j$ or $\delta_j$ for $j\neq i$. 

Two advantages of this notion are conformal invariance and monotonicity (in the domain). Clearly, if $D'\subset D$ and $\delta'_i$ is the connected component of $z_i$ in $\delta_i\cap D'$, then $\delta_1,\dots,\delta_n$ $\eps$-separated in $D$ implies $\delta'_1,\dots,\delta'_n$ $\eps$-separated in $D'$.

Given a collection of cuts $\delta_1,\dots,\delta_n$ in a punctured domain $D\setminus\{z_1,\dots,z_n\}$ and a base point $z\in D$, we can consider simple loops $\gamma_1,\dots,\gamma_n$ s.t. $\gamma_i$ circles counterclockwise around $z_i$ without intersecting $\delta_j$, $j\neq i$. Then we can define a notion of size of a representation $\rho:\pi_1(D\setminus\{z_1,\dots,z_n\})\rightarrow\SL_2(\R)$ by setting  
$$|\rho|=\min_i\|\log\rho(\gamma_i)\|$$
where $\log\rho(\gamma_i)$ is unambiguous since we are considering the case of unipotent local monodromies; $\|.\|$ denotes an operator norm on matrices. This notion depends on the choice of loops $\gamma_i$ (or of cuts $\delta_i$ and base point $z$). 

\begin{Lem}\label{Lem:tauboundunif}
For any $\eps>0$, there is $\delta>0$ small and $M>0$ large s.t. for any punctured domain $D\setminus\{z_1,\dots,z_n\}$ with $\eps$-separated cuts $\delta_1,\dots,\delta_n$ and representation $\rho:\pi_1(D\setminus\{z_1,\dots,z_n\})\rightarrow\SL_2(\R)$ with $|\rho|\leq\delta$,
$$|\log(\tau(D\setminus\{z_1,\dots,z_n\},\rho))|\leq M$$
\end{Lem}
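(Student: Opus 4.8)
The plan is to reduce, by conformal invariance, to a normalized configuration in $\H$; prove a uniform bound $\|A_j\|\le C(\eps)|\rho|$ on the residues of the associated Fuchsian system; and then integrate $d\log\tau=\omega$ starting from the real stratum $\Lambda_\R$, where $\tau\equiv 1$. For the reduction: $\eps$-separation is conformally invariant and monotone under restriction, and $\tau$ is Moebius invariant (Lemma \ref{Lem:taumoeb}) and depends only on the conformal class of the punctured domain and the conjugacy class of $\rho$ (Lemma \ref{Lem:tauchoice}); so one may take $D=\H$ and normalize by a homography, after which --- up to the degenerations where punctures reach $\partial\H$ or collide conformally --- the configuration ranges over a compact family. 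One then passes to the doubled picture on $\hat\C$ as in \S\ref{Sec:tauHP}: punctures $\lambda_1,\dots,\lambda_n,\lambda_{\bar n},\dots,\lambda_{\bar 1}$ with $\lambda_{\bar k}=\overline{\lambda_k}$, symmetric representation $\tilde\rho$, symmetrized branch cuts, and nilpotent residues $A_1,\dots,A_n$ fixed by the boundary condition of Lemma \ref{Lem:fuchsbound} (with $A_{\bar k}=\overline{A_k}$ on $\Lambda$).

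The key step is the uniform residue bound: there is $C=C(\eps)<\infty$ with $\|A_j\|\le C\,|\rho|$ for all $j$, uniformly over $\eps$-separated configurations, once $|\rho|$ is small. Writing $\rho(\gamma_j)=\Id_2+N_j$ with $\|N_j\|=\|\log\rho(\gamma_j)\|$ small, the normalized fundamental solution $Y_0$ ($Y_0(\infty)=\Id_2$) solves a Riemann--Hilbert problem with jump matrices within $O(|\rho|)$ of $\Id_2$, on a contour (the symmetrized cuts) whose conformal geometry is controlled by $\eps$-separation --- so the associated Cauchy operator has $\eps$-dependent norm. Small-norm Riemann--Hilbert theory then gives $Y_0-\Id_2=O_\eps(|\rho|)$ away from the punctures; since, for $w$ near $\lambda_j$, $\rho(\gamma_j)=Y_0(w)^{-1}(\Id_2+2i\pi A_j+o(1))Y_0(w)$ (as in \S\ref{Sec:tau}), this yields $\|A_j\|=O_\eps(|\rho|)$. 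Equivalently: $(\text{configuration},\rho)\mapsto(A_1,\dots,A_n)$ is continuous and vanishes at the trivial representation, and one controls the non-compact directions of configuration space using the $\eps$-separation lower bound on conformal distances between punctures, the Schlesinger equation (once $|\rho|\lesssim\eps$, so that the singular term $[A_i,A_j]/(\lambda_i-\lambda_j)$ integrates), and the near-boundary behaviour of Lemma \ref{Lem:taubound}.

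To conclude, integrate along a path in $\Lambda$ starting on $\Lambda_\R$: after a harmless generic perturbation, bring the punctures up to their target positions one at a time along segments issued from suitable real base points, keeping the cuts disjoint. Along such a path $\eps$-separation persists with a comparable constant, and isomonodromy keeps $\tilde\rho$ (hence $|\rho|$) fixed, so $\|A_j(t)\|\le C(\eps)|\rho|$ throughout; the path can be chosen so that $|\lambda_i(t)-\lambda_j(t)|$ is bounded below by a constant depending only on $\eps$ for $i\ne j,\bar j$, while for a conjugate pair $\{\lambda_k,\lambda_{\bar k}\}$ lifting off $\Lambda_\R$ one has $\Tr(A_kA_{\bar k})=-\Tr(A_k^2)=0$ at the boundary by nilpotency, so that $\Tr(A_kA_{\bar k})\tfrac{d(\lambda_k-\lambda_{\bar k})}{\lambda_k-\lambda_{\bar k}}$ stays integrable exactly as in the proof of Lemma \ref{Lem:taubound}. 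Then \eqref{eq:dlogtau} gives
\[
\bigl|\log\tau(D\setminus\{z_1,\dots,z_n\},\rho)\bigr|\;\le\;\tfrac12\sum_{i\ne j}\int_{\mathrm{path}}\bigl|\Tr(A_iA_j)\bigr|\,\Bigl|\tfrac{d(\lambda_i-\lambda_j)}{\lambda_i-\lambda_j}\Bigr|\;\le\;c(\eps)\,C(\eps)^2\,|\rho|^2,
\]
and choosing $\delta=\delta(\eps)$ small enough (as also needed in the Riemann--Hilbert step, and to ensure $|\rho|\lesssim\eps$) gives $|\log\tau|\le M$; in fact $M$ may be taken arbitrarily small.

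The hard part will be the uniform residue bound together with the path-geometry control: turning the qualitative, conformally invariant $\eps$-separation hypothesis into quantitative $\eps$-dependent estimates for the Riemann--Hilbert problem and for the puncture distances along the integration path, and handling uniformly the degenerations --- punctures tending to $\partial D$, controlled as in Lemma \ref{Lem:taubound}, and clusters of punctures collapsing at incomparable scales, where one would iterate the pinching factorization of Lemma \ref{Lem:pinch} together with the base case of Corollary \ref{Cor:tau1}.
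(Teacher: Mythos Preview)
Your overall architecture --- conformal invariance to $\H$, bound the residues $A_j$, integrate \eqref{eq:dlogtau} from $\Lambda_\R$ where $\tau\equiv 1$ --- matches the paper's. But the two proofs diverge on the central step, and your sketch has a gap there.

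The quantity $|\rho|$ is defined relative to a choice of cuts (equivalently, of generators $\gamma_i$). After you uniformize to $\H$, the images $\phi(\delta_i)$ of the original cuts can be arbitrarily wild curves: $\eps$-separation controls their mutual Brownian avoidance probability, not their length or winding. So when you write that the Riemann--Hilbert contour ``has conformal geometry controlled by $\eps$-separation'' and hence the Cauchy operator has $\eps$-dependent norm, that is exactly the missing step, not a consequence. The paper handles this by (i) discarding the images of the old cuts and building \emph{new} piecewise-linear cuts $\tilde\delta_i$ in $\H$ with length $\le C\Im(\tilde z_i)$ and pairwise distance $\ge\eps'\max(\Im(\tilde z_i),\Im(\tilde z_j))$, and then (ii) bounding $|\rho|$ \emph{with respect to the new cuts}: one must write each new generator $\tilde\gamma_i$ as a word in the old $\gamma_j$'s, and the word length is governed by the number of crossings of $\tilde\delta_i$ by the old cuts $\phi(\delta_j)$. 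The $\eps$-separation hypothesis is used precisely here --- via a Brownian-retract and geometric-variable argument --- to bound that crossing number by some $N(\eps)$, whence $\|M_i\|\le(1+\delta)^{2N}-1$. Without this change-of-basis step your small-norm RH argument has no controlled contour to run on, and your later claim that ``$|\rho|$ is fixed along the path'' is also not well-posed (it is fixed only once the generating system is).

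A second, smaller divergence: with good cuts in hand, the paper does not invoke RH small-norm theory at all. It moves the punctures one by one from the roots of the $\tilde\delta_i$ to their tips, solving the Schlesinger equations \eqref{eq:Schlesinger} along the way and controlling $\sum_j\|A_j\|$ by the Gr\"onwall--Bihari inequality (Lemma \ref{Lem:Bihari}); the initial data are the $M_i$'s from step (ii), which are small. Then \eqref{eq:dlogtau} is integrated along the same paths. Your RH route could in principle replace this ODE argument, but only after you have supplied the change-of-cuts step above.
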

\begin{proof}
We proceed in three steps. Firstly, we uniformize and construct ``good" cuts $\tilde\delta_i$ in $\H$ with control on the length and pairwise distance. These are not necessarily deformations of the (images of) the original cuts. See Figure \ref{Fig:tauunif}. In the second step, we control the size of $\rho$ w.r.t. to the new cuts. Lastly, we obtain a bound on $\log\tau$ from the conditions on cuts and size of $\rho$.
\begin{figure}
\begin{center}\includegraphics[scale=.8]{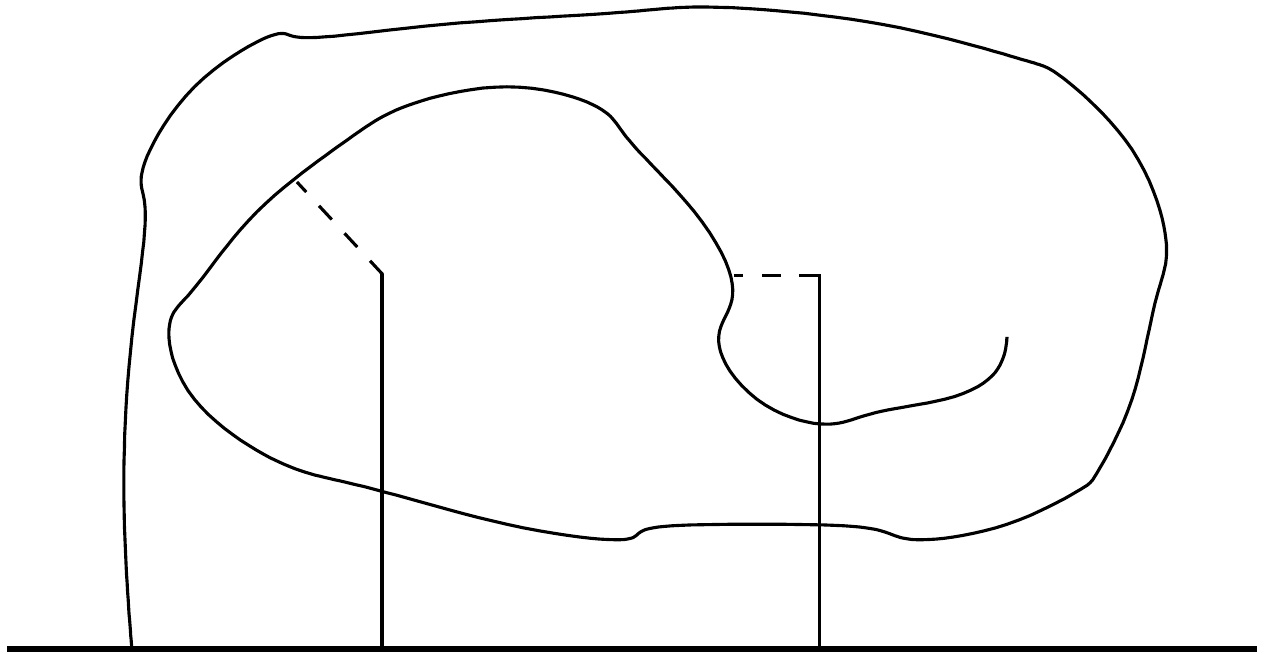}\end{center}
\caption{Original branch cuts in $\H$ (vertical). Dashed: new branch cuts in the domain cut along the curved exploration process.
}\label{Fig:tauunif}
\end{figure}

{\bf First step.} Let $\phi:(D,z_1,\dots,z_n)\rightarrow (\H,\tilde z_1,\dots,\tilde z_n)$ be a conformal equivalence. For instance from the Beurling estimate, it is clear that the pairwise hyperbolic distances between the $\tilde z_i$'s are bounded below by $\eps'=\eps'(\eps)>0$. We may for instance tile $\H$ by squares of side $\frac{\eps'}{10}  
\Im(z_1)$ and consider a path from $z_1$ to $\R$ on this square tiling with length $\leq C\Im(z_1)$ ($C$ depends on $n$) and at distance at least $\frac{\eps'}{10}  
\Im(z_1)$ from other punctures. One proceeds similarly to construct a cut from $\tilde z_2$ to $\R$, etc., ordering the punctures by decreasing imaginary parts. In this fashion we obtain piecewise linear cuts $\tilde\delta_i$ s.t. 
\begin{align*}
{\rm length}(\tilde\delta_i)&\leq C\Im(z_i)\\
\dist(\tilde\delta_i,\tilde\delta_j)&\geq \eps'\max(\Im(z_i),\Im(z_j))
\end{align*}
with $C<\infty$ and $\eps'>0$ depending on $n,\eps$. By scaling we may assume $\max\Im(z_i)=1$. Moreover there is $\eps''=\eps''(\eps')>0$ s.t. the probability that a BM started from any point on $\tilde\delta_i$ exits $\H$ before hitting a $\tilde\delta_j$, $j\neq i$, is at least $\eps''$.

{\bf Second step.} We initially have upper bounds on the $\|N_i\|$ where $\Id_2+N_i=\rho(\gamma_i)$, and the $\gamma_i$'s are loops in $D$ relative to the cuts $\delta_i$. In $\H$, we have new cuts $\tilde\delta_i$ and corresponding loops (given a base point) $\tilde\gamma_i$ and we want to bound the $\|M_i\|$'s, where $\Id_2+M_i=\rho(\tilde\gamma_i)$'s ($\pi_1(D\setminus\{z_1,\dots,z_n\})$ and $\pi_1(\H\setminus\{\tilde z_1,\dots,\tilde z_n\})$ are identified via $\phi$). 

Let $\delta'_i=\phi(\delta_i)$. We need to estimate the number of intersections of a loop equivalent to $\tilde\gamma_i$ with the cuts $\delta'_1,\dots\delta'_n$. For simplicity we choose the base point in $\H$ to be at the root of $\tilde\delta_i$ (the general case is a mild modification). We take $\tilde\gamma_i$ to run along $\tilde\delta_i$, make a small loop around $\tilde z_i$, and return to the root along $\tilde\delta_i$. In such a way the number of crossings of $\tilde\gamma_i$ with the $\delta'$'s can be chosen to be comparable with the number of crossings of $\tilde\delta_i$ by $\delta'_1,\dots,\delta'_n$. 

By the separation condition, with probability at least $\eps$, the $\delta'$'s are retracts of independent Brownian motions started at the punctures $\tilde z_i$. The number of crossings (successive visits to distinct cuts) of the $\tilde\delta_i$'s by a BM is dominated by $1+G$, $G$ a geometric variable with parameter $\eps''$. This gives a finite $N=N(\eps)$ s.t. the total number of crossings of the $\delta'_j$'s by a $\tilde\gamma_i$ is bounded by $N$. Then we may write
$$\Id_2+M_i=\prod_{j=1}^{n_i}(\Id_2\pm N_{k_j})$$
for some signs $\pm$ and some indices $k_1,\dots,k_{n_i}$, with $n_i\leq 2N$. This gives
$$M_i=\sum_j \pm N_{k_j}+\sum_{j_1<j_2}\pm N_{k_{j_1}}N_{k_{j_2}}+\cdots$$
and by submultiplicativity of operator norms
$$\|M_i\|\leq \delta'=(1+\delta)^{2N}-1$$
so that for a fixed $\eps>0$, $\delta'\searrow 0$ as $\delta\searrow 0$.

{\bf Conclusion.}

We can solve first the Schlesinger equations \eqref{eq:Schlesinger} and then \eqref{eq:dlogtau} by moving in turn each puncture from the root of the cut $\tilde\delta_i$ to its extremity. Then by a Gr\"onwall-Bihari differential inequality (Lemma \ref{Lem:Bihari}), one obtains a uniform bound on the $A$'s for punctures on the $\tilde\delta_i$'s for small enough initial data. The initial data consists in the $M_i$'s, which can be made arbitrarily small by choosing $\delta$ small enough. Then solving \ref{eq:dlogtau} (again along the punctures) gives the desired bound on $|\log\tau|$.  
\end{proof}

For the reader's convenience we provide a simple case of the Gr\"onwall-Bellman-Bihari integral inequality \cite{Bihari}.
\begin{Lem}\label{Lem:Bihari}
Let $u:[0,\infty)\rightarrow [0,\infty)$ be continuous and s.t. for $t\geq 0$
$$u(t)\leq u_0+\alpha\int_0^t u(s)^2ds$$
Then
$$u(t)\leq \frac{u_0}{1-\alpha u_0t}$$
for $t\leq (\alpha u_0)^{-1}$.
\end{Lem}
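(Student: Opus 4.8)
The plan is to run the classical differential–inequality argument on the integral majorant. Set
$$v(t)=u_0+\alpha\int_0^t u(s)^2\,ds,$$
so that $v(0)=u_0$, the hypothesis reads precisely $u(t)\le v(t)$ for all $t\ge 0$, and — since $u$ is continuous — $v$ is $C^1$ with $v'(t)=\alpha u(t)^2\ge 0$. In particular $v$ is nondecreasing, hence $v(t)\ge u_0\ge 0$; combining $0\le u(t)\le v(t)$ gives the autonomous differential inequality $v'(t)=\alpha u(t)^2\le \alpha v(t)^2$, whose comparison solution is exactly the right-hand side $u_0/(1-\alpha u_0 t)$ of the claim (the solution of $w'=\alpha w^2$, $w(0)=u_0$).

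First I would dispose of the generic case $u_0>0$. Then $v(t)\ge u_0>0$ for all $t$, so $1/v$ is $C^1$ and $v'\le\alpha v^2$ rewrites as
$$\frac{d}{dt}\!\left(-\frac{1}{v(t)}\right)=\frac{v'(t)}{v(t)^2}\le\alpha.$$
Integrating over $[0,t]$ yields $\frac{1}{u_0}-\frac{1}{v(t)}\le \alpha t$, i.e. $\frac{1}{v(t)}\ge\frac{1-\alpha u_0 t}{u_0}$. For $t<(\alpha u_0)^{-1}$ the right-hand side is positive, so we may invert to obtain $v(t)\le \frac{u_0}{1-\alpha u_0 t}$, and therefore $u(t)\le v(t)\le \frac{u_0}{1-\alpha u_0 t}$, which is the assertion (at $t=(\alpha u_0)^{-1}$ the claimed bound is $+\infty$, so nothing is being asserted there).

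The degenerate case $u_0=0$ follows by a simple limiting argument: for every $\eps>0$ the hypothesis still holds with $u_0$ replaced by $\eps$, hence $u(t)\le \eps/(1-\alpha\eps t)$ on $[0,(\alpha\eps)^{-1})$; fixing $t$ and letting $\eps\searrow 0$ forces $u(t)=0$, consistent with the stated bound. I do not expect any genuine obstacle here — the argument is elementary — and the only points that merit a word of care are the passage from the merely continuous $u$ to the automatically $C^1$ majorant $v$, keeping $v$ strictly positive so that $1/v$ is differentiable (automatic from $v\ge u_0>0$), and restricting to $t<(\alpha u_0)^{-1}$ so that inverting the final inequality is legitimate.
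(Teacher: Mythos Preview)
Your argument is correct and follows exactly the paper's approach: define $v(t)=u_0+\alpha\int_0^t u(s)^2\,ds$, observe $u\le v$ and $v'=\alpha u^2\le\alpha v^2$, and integrate. The paper simply writes ``it follows easily'' for the step you spell out via $\frac{d}{dt}(-1/v)\le\alpha$, and does not bother isolating the degenerate case $u_0=0$; your treatment is a strict elaboration of the same proof.
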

\begin{proof}
Let $v(t)=u_0+\alpha\int_0^t u(s)^2ds$, so that $u\leq v$ and $v$ is $C^1$. Then
$$v'=\alpha u^2\leq \alpha v^2$$
and it follows easily that 
$$v(t)\leq \frac{u_0}{1-\alpha u_0t}$$
\end{proof}

\subsection{Double dimers in infinite volume}\label{Sec:DDHP}

For technical reasons it is convenient to work in infinite volume, specifically in the upper half-plane. For definiteness, consider the square lattice $\Z^2$. From \cite{Ken_domino_conformal}, we know that there is an increasing sequence $\Gamma_n$ of subgraphs of $\Z^2$ exhausting the upper half-plane $\Z\times\N$ s.t. the uniform measure on perfect matchings of $\Gamma_n$ converges weakly to a probability measure on perfect matchings on $\Xi=\Z\times\N$ (each perfect matching is seen as a subgraph of $\Z\times\N$ and one consider the product $\sigma$-algebra generated by the states of individual edges). For example, one can take rectangles with even sides or rectangles with odd sides, minus a corner. Moreover, if $\rK_n^{-1}$ is the inverse Kasteleyn matrix of $\Gamma_n$, 
$$\rK_\Xi^{-1}(b,w)=\lim_{n\rightarrow\infty}\rK_n^{-1}(b,w)$$
exists for all $w$ white (resp. $b$ black) vertex in $\Xi$; $\rK_\Xi^{-1}$ is invariant under (horizontal) translations and $\lim_{b\rightarrow\infty}\rK_\Xi^{-1}(b,w)=0$ for any fixed $w$. This uniquely characterizes $\rK_\Xi^{-1}$, which, as pointed out earlier, can also be written in terms of the full plane kernel $\rK^{-1}$ by reflection arguments. 

Let us fix punctures (faces of $\Z\times\N$), branch cuts, and a representation $\rho$. We take $n$ large enough so that $\Gamma_n$ contains these punctures and cuts. For such $n$, we have from \eqref{eq:KenDD}
$$\E_{{\rm dimer}^2}^{\Gamma_n}\left(\prod_{\ell{\rm\ loop\ in\ }{\mf m}_1\cup{\mf m}_2}\frac{\Tr(\rho(\ell))}2\right)=\det((\rK_n\oplus\rK_n)_\rho(\rK_n^{-1}\oplus\rK_n^{-1}))$$
We want to take an infinite volume limit of this expression. Notice that the RHS is a (fixed) finite rank perturbation of the identity, and consequently
$$\lim_{n\rightarrow\infty}\det((\rK_n\oplus\rK_n)_\rho(\rK_n^{-1}\oplus\rK_n^{-1}))=\det(\rK\oplus\rK)_\rho(\rK_\Xi^{-1}\oplus\rK_\Xi^{-1}))$$
where the RHS has a finite Fredholm expansion (or is the determinant of any finite square block corresponding to a finite set of white vertices containing all those adjacent to the branch cuts).

In order to justify the infinite volume identity
$$\E_{{\rm dimer}^2}^{\Xi}\left(\prod_{\ell{\rm\ loop\ in\ }{\mf m}_1\cup{\mf m}_2}\frac{\Tr(\rho(\ell))}2\right)=\det(\rK\oplus\rK)_\rho(\rK_\Xi^{-1}\oplus\rK_\Xi^{-1}))$$
one can argue by weak convergence that
$$\E_{{\rm dimer}^2}^{\Xi}\left(\prod_{\ell{\rm\ loop\ in\ }{\mf m}_1\cup{\mf m}_2}\frac{\Tr(\rho(\ell))}2\right)=\lim_{n\rightarrow\infty}\E_{{\rm dimer}^2}^{\Gamma_n}\left(\prod_{\ell{\rm\ loop\ in\ }{\mf m}_1\cup{\mf m}_2}\frac{\Tr(\rho(\ell))}2\right)$$
The difficulty is in showing that the infinite volume double-dimer configuration does not contain bi-infinite paths. More precisely, we want to show that the diameter of  loops crossing a fixed branch cut of length $k$ is tight as $n\rightarrow\infty$. 
By modifying a bounded number of dimers in the superposition of configurations, a double dimer loop crossing a branch cut can be transformed into a loop going through a fixed boundary edge (two double-dimer loops using a pair of parallel edges can be merged by a local operation).  

Thus let $(b_0w_0)$ be a boundary edge within $O(1)$ of 0; and $\gamma_R$ be a simple path on the dual graph at distance of order $R$ of $(b_0,w_0)$ disconnecting a bounded component of $\Z\times\N$ (containing $B(0,R/2)$) from an unbounded component (containing $B(0,R)^{c}$). We choose $\gamma_R$ so that it crosses $O(R)$ edges, each of them with the black vertex inside and the white vertex outside. Let $\ell_0$ be the double-dimer loop through $(b_0w_0)$. Trivially,
$$\P^{\Gamma_n}(\diam(\ell_0)>R)\leq\E^{\Gamma_n}(|\{(bw)\in\ell_0, (bw){\rm\ crosses\ }\gamma_R\}|)=\sum_{(bw):(bw){\rm crosses\ }\gamma_R}\P^{\Gamma_n}((bw)\in\ell_0)$$
So we wish to bound the probability that a double-dimer loop goes through a bulk edge and a boundary at distance $\asymp R$. For this, we observe that a sliding argument similar to the one introduced by Kenyon in \cite{Ken_Lap} gives
$$\P^{\Gamma_n}_{{\rm dimer}^2}((bw)\in\ell_0)=\frac{{\mc Z}_{\rm\ dimer}(\Gamma_n\setminus\{b_0,w\}){\mc Z}_{\rm\ dimer}(\Gamma_n\setminus\{b,w_0\})}{{\mc Z}_{\rm\ dimer}(\Gamma_n)^2}$$
where ${\mc Z}(\Gamma)$ denotes the partition function of dimer configurations on with unit weights on $\Gamma$, viz. the number of perfect matchings of $\Gamma$.
In the context of the loop-erased random walk growth exponent, Kenyon \cite{Ken_Lap} showed in particular that for suitable $\Gamma_n$'s, 
$$\frac{{\mc Z}_{\rm\ dimer}(\Gamma_n\setminus\{b_0,w\})}{{\mc Z}_{\rm\ dimer}(\Gamma_n)}=O(R^{-3/4+o(1)})$$
if, say, $n$ is large enough so that $B(0,2R)\cap(\Z\times\N)\subset\Gamma_n$. See also \cite{Law_LERWexp} for an estimate without $o(1)$. 

In terms of exponents, $\frac 34=\frac 12+\frac 14$ (boundary monomer+bulk monomer exponents), in agreement with (somewhat informally)
\begin{align*}
{\rm LERW\ length}&=\frac 34=\frac 12+\frac 14={\rm boundary\ monomer}+{\rm bulk\ monomer}\\
{\rm boundary\ monomer-monomer}&=1=\frac 12+\frac 12\\
{\rm bulk\ monomer-monomer}&=\frac 12=\frac 14+\frac 14\\
{\rm bulk\ monomer}&=\frac 14=2\frac 18=2({\rm Ising\ magnetization})\\
{\rm double-dimer\ length}&=\frac 12=2\frac 14=2({\rm bulk\ monomer})
\end{align*}
based on combinatorial identities and reasonable quasi-multiplicativity assumptions.

We conclude that
$$\P^{\Gamma_n}(\diam(\ell_0)>R)=O(R^{-\frac 12+o(1)})$$
uniformly in $n$. This is somewhat crude, as we expect $O(R^{-1})$ to be the correct order of magnitude. This gives the

\begin{Lem}\label{Lem:ddtight}
Under the double-dimer measure on the half-plane $\Z\times\N$, if $\ell$ is a loop intersecting a fixed cut,
$$\P(\diam(\ell)\geq R)=O(R^{-1/2+o(1)})$$
and in particular there is a.s. no infinite path in the double-dimer configuration.
\end{Lem}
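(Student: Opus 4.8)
The plan is to follow the first-moment scheme sketched before the statement, organised around a surgery reduction and Kenyon's monomer estimate. First I would reduce the general case of a loop $\ell$ crossing a fixed cut of combinatorial length $k$ to the case of the loop $\ell_0$ passing through a single fixed boundary edge $(b_0w_0)$ within $O(1)$ of the origin: there are at most $k$ double-dimer loops meeting the cut, so by a union bound it suffices to control each of them, and by modifying a bounded number of dimers in the two superimposed matchings (merging a loop with the parallel pair of edges straddling a chosen boundary edge) one couples so that any such loop is, up to a bounded perturbation, a loop through $(b_0w_0)$. Working on a finite exhausting graph $\Gamma_n$ (Temperleyan as in \cite{Ken_domino_conformal}, with $B(0,2R)\cap(\Z\times\N)\subset\Gamma_n$), I then fix a dual path $\gamma_R$ at distance $\asymp R$ from $(b_0w_0)$ separating $B(0,R/2)$ from $B(0,R)^c$ inside $\Z\times\N$, crossing $O(R)$ primal edges, each with its black endpoint inside and white endpoint outside.

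Second, on $\{\diam(\ell_0)>R\}$ the loop $\ell_0$ must cross $\gamma_R$, so a first-moment bound gives
\[
\P^{\Gamma_n}(\diam(\ell_0)>R)\leq\sum_{(bw)\text{ crossing }\gamma_R}\P^{\Gamma_n}_{\mathrm{dimer}^2}((bw)\in\ell_0).
\]
The key input is Kenyon's sliding identity \cite{Ken_Lap}, which expresses each summand as a product of two two-monomer partition-function ratios,
\[
\P^{\Gamma_n}_{\mathrm{dimer}^2}((bw)\in\ell_0)=\frac{\mathcal Z_{\mathrm{dimer}}(\Gamma_n\setminus\{b_0,w\})\,\mathcal Z_{\mathrm{dimer}}(\Gamma_n\setminus\{b,w_0\})}{\mathcal Z_{\mathrm{dimer}}(\Gamma_n)^2},
\]
each factor involving a boundary monomer (at distance $O(1)$) and a bulk monomer (at distance $\asymp R$). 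Kenyon's loop-erased-walk growth estimate \cite{Ken_Lap} (or its sharp form \cite{Law_LERWexp}) gives $\mathcal Z_{\mathrm{dimer}}(\Gamma_n\setminus\{b_0,w\})/\mathcal Z_{\mathrm{dimer}}(\Gamma_n)=O(R^{-3/4+o(1)})$, uniformly in $n$ for suitable $\Gamma_n$; hence each summand is $O(R^{-3/2+o(1)})$, and summing over the $O(R)$ relevant edges yields $\P^{\Gamma_n}(\diam(\ell_0)>R)=O(R^{-1/2+o(1)})$ uniformly in $n$. Passing to the weak limit (with the usual care about the boundary of the event $\{\diam\ge R\}$) and undoing the surgery reduction gives $\P(\diam(\ell)\ge R)=O(R^{-1/2+o(1)})$ for the infinite-volume measure. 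Finally, applying this along dyadic scales $R=2^m$ and Borel--Cantelli — $\sum_m 2^{-m/2+o(m)}<\infty$ — shows that a.s. every loop meeting a given fixed cut has finite diameter; exhausting $\Z\times\N$ by countably many cuts, any connected component of the superposition meets some fixed cut and is therefore bounded, so there is a.s. no bi-infinite path.

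I expect the main obstacle to be precisely the combination of the surgery/union-bound reduction with the uniformity of Kenyon's monomer estimate over the exhausting sequence: one must verify that the local rewiring merging a cut-crossing loop into a loop through $(b_0w_0)$ is always available and costs only a bounded multiplicative factor, and that the $O(R^{-3/4+o(1)})$ bound of \cite{Ken_Lap} holds with its constant and $o(1)$ uniform in $n$ (this is why one restricts to ``suitable'' exhausting graphs, Temperleyan with a flat boundary segment). The genuinely hard analytic content — the $3/4$ monomer exponent — is imported wholesale, so beyond this bookkeeping no new estimate is required; the resulting $R^{-1/2}$ is deliberately not sharp (the conjectural order is $R^{-1}$).
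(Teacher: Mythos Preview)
Your proposal is correct and follows essentially the same route as the paper: the surgery reduction to a loop through a fixed boundary edge, the first-moment bound via the dual path $\gamma_R$, Kenyon's sliding identity expressing $\P((bw)\in\ell_0)$ as a product of two monomer ratios, and the $R^{-3/4+o(1)}$ input from \cite{Ken_Lap} are exactly the ingredients used in the text preceding the lemma. The Borel--Cantelli step is a harmless elaboration (the tail bound $\P(\diam(\ell)\geq R)\to 0$ already gives $\diam(\ell)<\infty$ a.s.\ for each loop, and the countable exhaustion by cuts finishes), and you are right to flag the surgery reduction and the uniformity of Kenyon's estimate as the points requiring care.
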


\bibliographystyle{abbrv}
\bibliography{biblio}

-----------------------

\noindent Columbia University\\
Department of Mathematics\\
2990 Broadway\\
New York, NY 10027

\end{document}